\pdfoutput=1 
\documentclass{amsart} 

\usepackage{tikz}

\usepackage{amssymb}
\usepackage{amsmath}
\usepackage{url}
\usepackage{bm}
\usepackage{enumitem}
\usepackage{graphicx}

\usepackage{hyperref}
\hypersetup{
 colorlinks = true,
 linkcolor = blue,
 filecolor = blue,
 urlcolor = blue,
 citecolor = blue,
}
\hypersetup{unicode=true}

\newcommand{\erase}[1]{}

\theoremstyle{plain}
\newtheorem{theorem}{Theorem}[section]       
\newtheorem{lemma}[theorem]{Lemma}           
\newtheorem{proposition}[theorem]{Proposition} 
\newtheorem{corollary}[theorem]{Corollary}   

\theoremstyle{definition}
\newtheorem{definition}[theorem]{Definition} 
\newtheorem{example}[theorem]{Example}       

\theoremstyle{remark}
\newtheorem{remark}[theorem]{Remark}         

\numberwithin{equation}{section}
\numberwithin{table}{section}
\numberwithin{figure}{section}

\newcommand{\FF}{\mathord{\mathbb{F}}}

\newcommand{\PP}{\mathord{\mathbb{P}}}
\newcommand{\QQ}{\mathord{\mathbb{Q}}}
\newcommand{\RR}{\mathord{\mathbb{R}}}

\newcommand{\XX}{\mathord{\mathbb{X}}}
\newcommand{\YY}{\mathord{\mathbb{Y}}}
\newcommand{\ZZ}{\mathord{\mathbb{Z}}}

\newcommand{\CCC}{\mathord{\mathcal{C}}}

\newcommand{\FFF}{\mathord{\mathcal{F}}}

\newcommand{\III}{\mathord{\mathcal{I}}}

\newcommand{\LLL}{\mathord{\mathcal{L}}}
\newcommand{\MMM}{\mathord{\mathcal{M}}}

\newcommand{\PPP}{\mathord{\mathcal{P}}}

\newcommand{\WWW}{\mathord{\mathcal{W}}}

\newcommand{\SSSS}{\mathord{\mathfrak{S}}}    

\newcommand{\vect}[1]{\boldsymbol{#1}}

\newcommand{\maprightsb}[1]{\xrightarrow[\smash{#1}]{}}

\newcommand{\inj}{\hookrightarrow}
\providecommand{\twoheadrightarrow}{\mathrel{\rightarrow\!\!\!\!\rightarrow}}
\newcommand{\surj}{\twoheadrightarrow}

\newcommand{\isom}{\xrightarrow{\raise -3pt \hbox{\scriptsize $\sim$} }}

\newcommand{\set}[2]{\{\,{#1}\mid {#2} \,\}}           
\newcommand{\bigset}[2]{\left\{\; {#1} \; \left\vert \; {#2} \;  \right.\right \}}
\newcommand{\bigsetR}[2]{\left\{\; \left. {#1} \; \right\vert \; {#2} \;  \right \}}

\newcommand{\angs}[1]{\langle {#1}  \rangle}           

\newcommand{\tensor}{\otimes}
\newcommand{\inv}{^{-1}}                 
\newcommand{\dual}{^{\vee}}               

\newcommand{\sprime}{^{\prime}}

\newcommand{\sperp}{^{\perp}}

\newcommand{\semidirectproduct}{\rtimes}    

\DeclareMathOperator{\Aut}{Aut}
\DeclareMathOperator{\Image}{Im}

\DeclareMathOperator{\Stab}{Stab}
\DeclareMathOperator{\Ker}{Ker}
\DeclareMathOperator{\Coker}{Coker}

\DeclareMathOperator{\Hom}{Hom}
\DeclareMathOperator{\rank}{rank}
\DeclareMathOperator{\pr}{pr}
\DeclareMathOperator{\disc}{disc}

\newcommand{\OG}{\mathord{\mathrm{O}}}

\newcommand{\id}{\mathord{\mathrm{id}}}

\newcommand{\intf}[1]{\langle #1 \rangle}

\newcommand{\mystruth}[1]{\phantom{\llap{\vrule height #1}}}
\newcommand{\mystrutd}[1]{\phantom{\llap{\vrule depth #1}}}
\newcommand{\mystruthd}[2]{\phantom{\llap{\vrule  height #1 depth #2}}}

\newcommand{\dotz}{\cdot 0}
\newcommand{\dotinf}{\cdot \infty}
\newcommand{\Invols}{\III}
\newcommand{\Cen}{\mathrm{Cen}}
\newcommand{\Leech}{\Lambda}
\newcommand{\Golay}{\CCC_{24}}
\newcommand{\pig}{\pi_{g}}

\newcommand{\vep}{\varepsilon}
\newcommand{\PowOmega}{2^{\Omega}}
\newcommand{\weyl}{\vect{w}}

\newcommand{\IIlat}{\mathrm{II}}
\newcommand{\typeILat}{\mathrm{I}}
\newcommand{\typeIILat}{\mathrm{II}}
\newcommand{\Lts}{L_{26}}
\newcommand{\PPPts}{\PPP_{26}}
\newcommand{\FFFts}{{\FFF}_{26}}
\newcommand{\enrinvol}{\epsilon}

\newcommand{\Sigmatwlv}{\varSigma_{12}}
\newcommand{\Laminated}{\varLambda}
\newcommand{\BW}{\mathrm{BW}}

\newcommand{\Lamsxtn}{\Laminated_{16}}
\newcommand{\barLeech}{\overline{\Leech}}
\newcommand{\compword}[1]{\overline{#1}}

\newcommand{\intfLeech}[1]{\intf{#1}_{\Leech}}
\newcommand{\intfL}[1]{\intf{#1}_{\hskip -1pt L}}
\newcommand{\intfM}[1]{\intf{#1}_{\hskip -1pt M}}

\newcommand{\tilg}{\tilde{g}}

\newcommand{\LeechRoot}[1]{r(#1)}

\newcommand{\AOG}{\mathord{\mathrm{AO}}}

\newcommand{\ConCham}{\mathord{C}}
\newcommand{\ConChamz}{\ConCham(\weyl_0)}

\newcommand{\walls}{\WWW}
\newcommand{\tilwalls}{\widetilde{\WWW}}
\newcommand{\tilXi}{\widetilde{\Xi}}
\newcommand{\Xitau}{\Xi_{\tau}}
\newcommand{\tilXitau}{\tilXi_{\tau}}

\newcommand{\Lplus}{L_{+}}
\newcommand{\Lminus}{L_{-}}
\newcommand{\qplus}{q_{+}}
\newcommand{\qminus}{q_{-}}
\newcommand{\gplus}{g_{+}}
\newcommand{\gminus}{g_{-}}
\newcommand{\Leechplus}{\Leech_{+}}
\newcommand{\Leechminus}{\Leech_{-}}
\newcommand{\etaplus}{{\eta}_{+}}
\newcommand{\etaminus}{{\eta}_{-}}
\newcommand{\lambdaplus}{{\lambda}_{+}}
\newcommand{\lambdaminus}{{\lambda}_{-}}
\newcommand{\FFFplus}{\FFF_{+}}
\newcommand{\prplus}{\pr_{+}}

\newcommand{\PPPplus}{\PPP_{+}}
\newcommand{\vGam}{\varGamma}
\newcommand{\tilvGam}{\widetilde{\varGamma}}

\newcommand{\prLplus}{\pr^{L}_{+}}
\newcommand{\prLminus}{\pr^{L}_{-}}
\newcommand{\prLeechplus}{\pr^{\Leech}_{+}}
\newcommand{\prLeechminus}{\pr^{\Leech}_{-}}

\newcommand{\barSigma}{\overline{\Sigma}}

\newcommand{\tauI}{\tau_{120}}
\newcommand{\tauII}{\tau_{135}}
\begin{document}
\title[Involutions in the affine Conway group]
{Involutions in the affine Conway group}
\author{Ichiro Shimada}
\address{Department of Mathematics,
Graduate School of Science,
Hiroshima University,
1-3-1 Kagamiyama,
Higashi-Hiroshima,
739-8526 JAPAN}
\email{ichiro-shimada@hiroshima-u.ac.jp}
%
%
\dedicatory{In memory of Professor Tetsuji Shioda}
\begin{abstract}
The affine Conway group is the group of affine isometries of the Leech lattice.
This group is isomorphic to the automorphism group of 
a standard fundamental domain for the action of 
the Weyl group on the even                                                                                                                                                                                                                                                                                                      hyperbolic lattice $\typeIILat_{1, 25}$ of rank $26$.
 \par
In this paper, we show that the affine Conway group has exactly nine conjugacy classes of involutions.
We investigate their properties
and show that, 
among these nine classes, 
one class can be regarded as an analogue of the class of Enriques involutions of K3 surfaces.
 \par
Motivated by possible applications to K3 and Enriques surfaces, 
we investigate in detail the orthogonal groups of the hyperbolic lattices 
arising as the invariant sublattices of involutions in $\typeIILat_{1, 25}$.
This computation is carried out using the Borcherds method.
Unlike the examples considered previously, 
 the induced chambers possess infinitely many walls.
\end{abstract}
\keywords{Conway group, Leech lattice, hyperbolic lattice, involution}
\maketitle
\section{Introduction}
\subsection{Main results}
The \emph{affine Conway group} $\dotinf$ is the group of 
all affine isometries of the Leech lattice $\Leech$,
and is equal to the semidirect product $\dotz \ltimes \Leech$
of $\dotz$ and $\Leech$,
where $\dotz:=\OG(\Leech)$ is the orthogonal group of $\Leech$
acting on $\Leech$ from the right, and 
$(g, \tau)\in \dotz \ltimes \Leech$ acts on $\Leech$ as
\[
\lambda\;\;\mapsto\;\; \lambda^g+\tau.
\]
In this paper,
we classify the involutions in $\dotinf$ and investigate their properties. 
\par
An interesting outcome of our study is that 
several distinguished lattices appear naturally. 
In particular, the unimodular hyperbolic lattices $\typeILat_{1, 9}$ and $\typeIILat_{1, 9}$ of rank $10$
arise as the invariant sublattices of involutions of the natural action of $\dotinf$ on 
the even hyperbolic lattice $\Lts:=\typeIILat_{1, 25}$ of rank $26$.                                                                                                                                                                                                                                                                                                    
\par
Another interesting aspect of this work lies in its connection with algebraic geometry. 
Via the Borcherds method~\cite{MR913200, MR1654763},
the action of $\dotinf$ on $\Lts$
has found applications in the study of automorphism groups of K3 surfaces, 
Enriques surfaces, and, more recently, irreducible holomorphic symplectic (IHS) manifolds
 (see, for example,~\cite{MR4931809}).
 Our results can be interpreted as geometric results 
concerning some \emph{virtual} K3 surface $\XX$ and \emph{virtual} Enriques surface $\YY$.
 See Section~\ref{subsec:alggeom}.
\par
Our first main result is as follows.
\begin{theorem}\label{thm:main1}
There exist exactly 
$9=1+1+3+4$
conjugacy classes of involutions in the affine Conway group $\dotinf$.
\end{theorem}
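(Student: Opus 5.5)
An element $(g,\tau)\in\dotz\ltimes\Leech$ is an involution exactly when
\[
(g,\tau)^2=(g^2,\tau^g+\tau)=(1,0),
\]
that is, when $g^2=1$ and $\tau^g=-\tau$. The plan is therefore to fibre the classification over the conjugacy classes of involutions $g\in\dotz$ together with $g=1$. There are four classes of elements $g$ with $g^2=1$ in $\dotz$ (the Conway group $\mathrm{Co}_0$). Their fixed sublattices $\Leech^g$ have ranks $24$, $0$, $16$ and $8$: the identity $1$, the central element $-1$, the class $2A$ of $\mathrm{Co}_0$ with $\Leech^g$ of rank $16$ (an element whose image is $1$ in $\mathrm{Co}_1$ is $\pm1$), and the class with $\Leech^g$ of rank $8$ (eigenspaces $\pm1$ of dimensions $8$ and $16$, so $-g$ has rank-$16$ fixed lattice). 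Note that $-1$ times the rank-$16$ class gives the rank-$8$ class. The decomposition $9=1+1+3+4$ should match these four fibres.

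For a fixed $g$, conjugating $(g,\tau)$ by a translation $(1,\sigma)$ replaces $\tau$ by $\tau+\sigma-\sigma^g$. Conjugating by $(h,0)$ with $h\in C(g)$, the centralizer of $g$ in $\dotz$, replaces $\tau$ by $\tau^h$. Hence the conjugacy classes of involutions lying over the class of $g$ correspond bijectively to the orbits of $C(g)$ on the group
\[
H(g):=\Ker(1+g)/\Image(1-g),
\]
which is the first cohomology $H^1(\langle g\rangle,\Leech)$, an elementary abelian $2$-group. I will first check that $H(g)$ is indeed $2$-torsion, and that it is well defined as a set of classes (the identity element included) for all four choices of $g$.

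The four cases then go as follows.
\begin{itemize}
\item For $g=1$, we have $\Ker(1+g)=0$, so the only element over $g$ is the identity, which we exclude. This contributes $0$ involutions. I will recount below so that the total is still nine.
\item For $g=-1$, we get $H(g)=\Leech/2\Leech$. The orbits of $\dotz$ on $\Leech/2\Leech$ are classically four: $0$ and the classes represented by short vectors of types $2$, $3$ and $4$ (Conway). This gives $4$ classes of point reflections $\lambda\mapsto-\lambda+\tau$.
\item For the other two classes, let $\Lplus=\Leech^g$ and $\Lminus=\Ker(1+g)$. Then $H(g)=\Lminus/\Image(1-g)$, and $\Image(1-g)$ lies between $2\Lminus$ and $\Lminus$. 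Its index in $\Lminus$ is governed by the glue, namely $\Leech/(\Lplus\oplus\Lminus)\cong 2^{k}$. Concretely, $\Lminus\cong\sqrt2 E_8$ when it has rank $8$, and $\Lminus\cong\Lamsxtn$ scaled by $\sqrt2$ (the Barnes--Wall lattice) when it has rank $16$, with $H(g)\cong\Lminus/\pr_{-}(\Leech)\cdot2$. This works out to $2^{8-?}$-type groups.
\end{itemize}

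To get the count, compute $H(g)=\Lminus/(1-g)\Leech$ explicitly, using that $(1-g)\lambda=2\pr_{-}(\lambda)$. This gives $H(g)\cong\Lminus/2\pr_{-}(\Leech)$, where $\pr_{-}(\Leech)=\Lminus^\vee$ up to scaling, by primitivity of $\Lminus$ in the unimodular lattice $\Leech$. Thus $H(g)\cong\Lminus/2\Lminus^\vee$. For $\Lminus=\sqrt2E_8$ we have $2\Lminus^\vee=\sqrt2E_8=\Lminus$, so $H(g)=0$ and this fibre contributes $1$ class. For $\Lminus=\BW_{16}$ we have $2\Lminus^\vee/\ldots\cong2^{8}$, a quadratic space on which $C(g)\supset 2^{1+8}.\mathrm{O}_8^+(2)$ acts. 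The orbits there are $0$, nonzero singular vectors, and nonsingular vectors, giving $3$ classes.

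The total is then $1$ (for $g=1$) $+\,1$ (rank-$8$ case) $+\,3$ (rank-$16$ case) $+\,4$ (for $g=-1$) $=9$. Here I count the identity's fibre as containing $1$, and this must be corrected: the statement counts only genuine involutions. So for $g=-1$ the class of $\tau=0$ is the involution $-1$ itself and is kept. The $g=1$ fibre contributes nothing, and the missing $1$ is then made up by splitting correctly between the rank-$8$ and rank-$16$ cases. I will do that bookkeeping by the explicit computation rather than guess.

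The main obstacle is identifying the orbits of $C(g)$ on $H(g)$ in the rank-$16$ case. For this I would realize $g$ as $\vep_C$, a sign change on an octad $C$ of the Golay code $\Golay$. I would then use the explicit action of the $2^{1+8}$ subgroup and the quadratic form $q(\tau)=\intf{\tau,\tau}/2 \bmod 2$ to separate the orbits, and verify by computer that orbit representatives with distinct invariants are not conjugate.
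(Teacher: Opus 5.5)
Your reduction matches the paper's. Involutions of $\dotinf$ lie over involutions $g$ of $\dotz$, and the fibre over $[g]$ is the set of $\Cen(\dotz,g)$-orbits on $\Leech(g,-)/(1-g)\Leech$ (Proposition~\ref{prop:bijf}). Your identification of this group with $\Lminus/2\Lminus\dual$, via unimodularity of $\Leech$, is also correct.

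\textbf{The gap: a missing class of involutions.} You assert that $\dotz$ has four classes of elements with $g^2=1$: $\id$, $-\id$, and the two octad-type involutions with fixed lattices of rank $16$ and $8$. But by Conway's theorem $\dotz$ has four conjugacy classes of \emph{involutions}, $[\vep_8],[\vep_{12}],[\vep_{16}],[\vep_{24}]$, and the identity is not among them. You dropped the dodecad involution $\vep_{12}=\vep(C_{12})$, whose fixed and anti-fixed sublattices both have rank $12$. Since $\Leech(\id,-)=0$, the identity contributes nothing, so your argument actually proves only $0+1+3+4=8$.

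Your proposed repair, that the missing $1$ comes from ``splitting correctly between the rank-$8$ and rank-$16$ cases'', cannot work. You have just shown that those two fibres have exactly $1$ and $3$ elements. The missing class lies over $[\vep_{12}]$. There $\Leech(\vep_{12},-)\cong\Sigmatwlv(2)$ with $\Sigmatwlv$ unimodular, so $2\Lminus\dual=\Lminus$ and $\pi_{12}$ is surjective. Hence that fibre consists of the single class of $(\vep_{12},0)$. This is the second $1$ in $9=1+1+3+4$, which refers to the fibres over $[\vep_8],[\vep_{12}],[\vep_{16}],[\vep_{24}]$, not over $\id$.

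\textbf{A smaller point in the $\vep_{16}$ case.} The normal subgroup $K_{16}\cong 2^{1+8}$ of $\OG(\Lamsxtn)$ acts trivially on $\Lamsxtn/2\Lamsxtn\dual$ (Remark~\ref{rem:K16}). So its explicit action is no help in computing orbits on $H(g)$; the action factors through the quotient $\OG^+_8(2)$. Non-conjugacy of classes with different values of $\frac{1}{2}\intf{\tau,\tau}\bmod 2$ is automatic, since this form is invariant. What actually has to be checked is that this quotient acts transitively on the $135$ nonzero singular vectors and on the $120$ nonsingular vectors. The paper does this by computing orbits from generators of $\OG(\Lamsxtn)$, after noting that $\Cen(\dotz,\vep_{16})\to\OG(\Lamsxtn)$ is surjective.
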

The natural homomorphism $\dotinf \to \dotz$ maps
the conjugacy class $[\gamma]$ of an involution $\gamma=(g, \tau)$
in $\dotinf$ to the conjugacy class $[g]$ of the involution $g$ in $\dotz$. 
Conway~\cite[Chapter 10,~Section 3.7]{theCSbook}
showed that there exist exactly four conjugacy classes
$[\vep_8], [\vep_{12}], [\vep_{16}], [\vep_{24}]$ of involutions in $\dotz$,
where $\vep_n$ is an involution such that the rank of 
the invariant sublattice 
$\Ker(\id-\vep_n)$ of the Leech lattice $\Leech$ is $24-n$.
The decomposition $9=1+1+3+4$ in Theorem~\ref{thm:main1} 
is intended to show the number of conjugacy classes of involutions in $\dotinf$
over each of the conjugacy classes of involutions in $\dotz$.
\par
A lattice of rank 
$n>1$
 is said to be \emph{hyperbolic} if its associated real quadratic space has signature $(1,n-1)$.
 (Since our applications are to algebraic surfaces, we use the convention $(1,n-1)$ instead of $(n-1,1)$.)
A \emph{positive cone} $\PPP$ of a hyperbolic lattice $L$
is one of the two connected components 
of $\set{x\in L\tensor \RR}{\intf{x, x} > 0}$.
\par
The group $\dotinf$ was considered by Conway~\cite{MR690711}
 in his study of the orthogonal group of an even unimodular hyperbolic lattice $\IIlat_{1,25}$.
Note that $\IIlat_{1,25}$ is isomorphic to 
 \[
 \Lts:= U\oplus \Leech(-1), 
\]
where $U$ is the hyperbolic plane with the basis $\weyl_0, \weyl_1$
satisfying 
\begin{equation}\label{eq:w0w1}
\intf{\weyl_0, \weyl_0}=\intf{\weyl_1, \weyl_1}=0, 
\quad \intf{\weyl_0, \weyl_1}=1, 
\end{equation}
and $\Leech(-1)$ is the \emph{negative-definite} Leech lattice.
Let $\PPPts$ be 
the positive cone of $\Lts$ containing $\weyl_0$ in its closure, 
let $\OG(\Lts, \PPPts)$ be the stabilizer subgroup of $\PPPts$ in $\OG(\Lts)$, 
and 
let $W(\Lts)$ denote the Weyl group of $\Lts$
(see~Section~\ref{subsec:notationlat} for the terminology).
Then $\dotinf$ is embedded into $\OG(\Lts, \PPPts)$
as the stabilizer of $\weyl_0$.
Conway~\cite{MR690711} showed that 
 this subgroup is
the automorphism group
of a standard fundamental domain of the action of $W(\Lts)$ on $\PPPts$.
In particular, we have 
\[
\OG(\Lts, \PPPts)= W(\Lts) \rtimes\dotinf.
\]
\par
To state our next main result, 
we use the following notation.
Let $M$ be a lattice.
For an involution $g\in \OG(M)$, we put
\begin{equation}\label{eq:Mgpm}
M(g, +):=\set{x\in M}{x^g=x},
\quad
M(g, -):=\set{x\in M}{x^g=-x}.
\end{equation}
For a non-zero integer $m$,
let $M(m)$ denote the lattice obtained from $M$ by multiplying its bilinear form by
$m$.
For a group $G$ and an element $g\in G$,
we denote by $\Cen(G, g)$ the centralizer of $g$ in $G$.
\begin{table}
 %
\[
\begin{array}{lc c }
\gamma=(g, \tau) & \Lts(\gamma, +) & |\Lts(\gamma, -)_4|
\mystrutd{5pt} \\
\hline 
\gamma_{8}=(\vep_8, 0) & U\oplus \Lamsxtn(-1) & 240
\mystruthd{10pt}{5pt} \\
\hline
\gamma_{12}=(\vep_{12}, 0) & U\oplus \Sigmatwlv(-2) & 264
\mystruthd{10pt}{5pt} \\
\hline
\mystruth{10pt}
\gamma_{16, 1}=(\vep_{16}, 0) &U\oplus E_8(-2) &4320\\
\gamma_{16, 120} =(\vep_{16},\tauI) & \typeILat_{1, 9}(2) &2016\\
\gamma_{16, 135} =(\vep_{16},\tauII) & \typeIILat_{1, 9}(2) &2272
\mystrutd{5pt}\\
\hline
\mystruth{10pt}
\gamma_{24, 0}=( \vep_{24}, 0)& U &196560\\
\gamma_{24, 4}=(\vep_{24},t_4)& V_{4} & 102352 \\
\gamma_{24, 6}=( \vep_{24}, t_6) & V_{6} &98256\\
\gamma_{24, 8}=( \vep_{24}, t_8) & V_{8} &98256
\end{array}
\]
\vskip 10pt
\caption{$\Lts(\gamma, +)$ and $\Lts(\gamma,-)$}\label{table:isoms}
%
%
\end{table}
Then Theorem~\ref{thm:main1} is refined as follows:
\begin{theorem}\label{thm:main2}
Suppose that $\gamma=(g, \tau)$ is an involution
in $\dotinf$,
and let $n$ be the rank of 
 the anti-invariant sublattice $\Leech(g, -)$ of the involution $g\in \dotz$ on $\Leech$.
Then,
regarding $\gamma$ as an involution in $\OG(\Lts, \PPPts)$,
we see that 
the sublattice $\Lts(\gamma, -)$ of $\Lts$ is of rank $n$ and is isomorphic to a sublattice of $\Leech (-1)$, 
whereas $\Lts(\gamma, +)$ is a hyperbolic lattice of rank $26-n$.
\par
The nine conjugacy classes in Theorem~\ref{thm:main1} are distinguished by 
the isomorphism classes of $\Lts(\gamma, +)$ and $\Lts(\gamma,-)$ as in Table~\ref{table:isoms}.
\end{theorem}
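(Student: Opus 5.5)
We first make explicit how $\dotinf$ acts on $\Lts=U\oplus\Leech(-1)$. The element $(g,\tau)$ fixes $\weyl_0$. It sends $\lambda\in\Leech(-1)$ to $\lambda^g+\intf{\lambda^g,\tau}\weyl_0$, up to sign conventions. It sends $\weyl_1$ to $\weyl_1+\tau-\tfrac{1}{2}\intf{\tau,\tau}\weyl_0$, using the Conway--Sloane formula for the stabilizer of $\weyl_0$.

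The involution condition $\gamma^2=1$ is equivalent to $g^2=1$ together with $\tau^g+\tau=0$, i.e.\ $\tau\in\Leech(g,-)$. Conjugating by a translation $\sigma$ replaces $\tau$ by $\tau+\sigma-\sigma^g$. So for fixed $g$, the involutions over $g$ correspond to the cosets
\[
\Leech(g,-)/(\id-g)\Leech,
\]
modulo the action of $\Cen(\dotz,g)$. This quotient is a first cohomology group $H^1(\angs{g},\Leech)$, and it is an elementary abelian $2$-group.

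\textbf{Rank statement.} The invariant lattice $\Lts(\gamma,+)\otimes\QQ$ contains $\weyl_0$ and an isotropic vector $\weyl_1+\tfrac12\tau+c\weyl_0$. Its remaining part is $\Leech(g,+)\otimes\QQ$, while $\Lts(\gamma,-)\otimes\QQ=\Leech(g,-)\otimes\QQ$. Hence $\Lts(\gamma,+)$ has rank $2+(24-n)=26-n$ and contains a copy of $U\otimes\QQ$, so it is hyperbolic. Concretely, $x\in\Lts(\gamma,-)$ forces the $\weyl_0$ and $\weyl_1$ coordinates to vanish. Indeed, the $\weyl_1$-coefficient is invariant under $\gamma$, so it must be $0$; then the $\weyl_0$-coefficient is anti-invariant and fixed, so it is $0$ as well. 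The remaining $\lambda$-part must satisfy $\lambda^g=-\lambda$ and $\intf{\lambda^g,\tau}=0$. Thus $\Lts(\gamma,-)$ is the sublattice of $\Leech(g,-)(-1)$ orthogonal to $\tau$, which has full rank $n$ because $\tau\in\Leech(g,-)$ already gives the correct rank. This realises $\Lts(\gamma,-)$ as a sublattice of $\Leech(-1)$ of rank $n$.

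\textbf{Counting classes.} For each of Conway's four classes $\vep_8,\vep_{12},\vep_{16},\vep_{24}$, we compute $H^1$ and the orbits of $\Cen(\dotz,\vep_n)$ on it.
\begin{itemize}
\item $n=24$, $g=-1$: the quotient is $\Leech/2\Leech$, and the orbits of $\dotz$ on it are classified by the minimal norms $0,4,6,8$ (Conway's classical result). This gives $4$ classes.
\item $n=8$ and $n=12$: since $\Leech(g,+)\oplus\Leech(g,-)$ has index $2^{8}$ resp.\ $2^{12}$ in $\Leech$, we expect $\Leech(g,-)=(\id-g)\Leech$. This reduces to checking the glue, i.e.\ that the projection of $\Leech$ onto $\Leech(g,-)\otimes\QQ$ equals $\tfrac12(\id-g)\Leech$. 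It gives $H^1=0$ and hence $1$ class each.
\item $n=16$: $\Leech(g,-)=\BW_{16}$-like and $\Leech(g,+)\cong E_8(2)$. Here $H^1\cong\FF_2^{\,8}$ or a similar group, and the centralizer, which involves $2^{1+8}.\OG_8^+(2)$, acts with three orbits of sizes $1,120,135$. These correspond to the zero class and to the nonzero singular/nonsingular vectors of a quadratic form on $\FF_2^8$ of plus type. This explains the notation $\tauI,\tauII$.
\end{itemize}
This gives $1+1+3+4=9$ classes.

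\textbf{Table entries.} For the lattices in Table~\ref{table:isoms}, we compute $\Lts(\gamma,+)$ directly as the lattice spanned by $\weyl_0$, the shifted isotropic vector, and $\Leech(g,+)$ glued appropriately. When $\tau=0$ this is simply $U\oplus\Leech(g,+)(-1)$, giving the $\Lamsxtn$, $\Sigmatwlv(2)$ and $E_8(2)$ entries. For nonzero $\tau$ we determine the discriminant form and parity in order to identify $\typeILat_{1,9}(2)$, $\typeIILat_{1,9}(2)$ and the lattices $V_k$. The counts $|\Lts(\gamma,-)_4|$ are computed by enumeration, namely by counting vectors of the anti-invariant part orthogonal to $\tau$. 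Since these invariants differ from class to class, they distinguish the classes.

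The main obstacle is the $n=16$ orbit computation. There we must correctly identify $H^1$ and the centralizer action, and verify that exactly three orbits occur. We would do this by explicit computer calculation with a concrete $\vep_{16}$ in Golay-code coordinates, as is done for the table entries.
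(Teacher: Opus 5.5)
There is a genuine error in your description of $\Lts(\gamma,-)$ when $\tau\neq 0$. Write $v=a\weyl_0+b\weyl_1+x$ with $x$ in the $\Leech(-1)$-summand. By the formula you quote yourself, $\gamma$ sends $x$ to $x^g+\intf{x^g,\tau}\weyl_0$. So the $\weyl_0$-coefficient of $v^\gamma$ is $a+\intf{x^g,\tau}$ plus a multiple of $b$, not $a$. Hence $v^\gamma=-v$ gives $b=0$, $x^g=-x$ and $2a=\intf{x,\tau}$. It does not give $a=0$ and $x\perp\tau$. The correct answer (Proposition~\ref{prop:involL} in the paper) is
\[
\Lts(\gamma,-)=\bigl\{\tfrac12\intf{x,\tau}\,\weyl_0+x \;\bigm|\; x\in\Leech(g,-),\ \intf{x,\tau}\in 2\ZZ\bigr\}.
\]
The map $v\mapsto x$ is an isometry onto a sublattice of $\Leech(g,-)(-1)$, because $\weyl_0$ is isotropic and orthogonal to every vector with vanishing $\weyl_1$-coefficient. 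This lattice contains $2\Leech(g,-)$, so it has rank $n$. By contrast, $\tau\sperp\cap\Leech(g,-)$ has rank $n-1$ for $\tau\neq0$, so your sentence asserting that it has full rank is self-contradictory. The rank part of the theorem survives only through your separate (correct) computation that $\Lts(\gamma,+)\otimes\QQ$ is spanned by $\weyl_0$, $\weyl_1+\tau/2$ and $\Leech(g,+)$.

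This is not cosmetic, since the second half of the theorem is about the isomorphism class of $\Lts(\gamma,-)$. Counting norm $-4$ vectors orthogonal to $\tau$, as you propose, gives the wrong column of Table~\ref{table:isoms}. For $\gamma_{24,4}$, with $\tau$ a minimal Leech vector, there are $93150$ minimal vectors orthogonal to $\tau$. The correct count is of those with $\intf{x,\tau}\in\{0,\pm2,\pm4\}$, namely $93150+2\cdot4600+2=102352$. Moreover, this column is genuinely needed: since $V_4\cong V_8$, the classes of $\gamma_{24,4}$ and $\gamma_{24,8}$ are separated only by $\Lts(\gamma,-)$.

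The rest of your plan, counting $\Cen(\dotz,g)$-orbits on $\Coker(\pi_g)$, is exactly the paper's route. The surjectivity of $\pi_g$ for $n=8,12$, which you only ``expect'', follows at once. By unimodularity of $\Leech$, $\Image\pi_g=2\Leech(g,-)\dual$. Since $\Leech(g,-)\cong N(2)$ with $N=E_8$ or $\Sigmatwlv$ unimodular, this equals $\Leech(g,-)$.
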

In Table~\ref{table:isoms}, we use the following notation.
\begin{itemize}
\item
We denote by $|\Lts(\gamma, -)_4|$
the size of the set of vectors of norm $-4$ in the negative-definite lattice $\Lts(\gamma, -)$.
Note that, as a sublattice of $\Leech(-1)$, 
the even lattice $\Lts(\gamma, -)$ has no vectors of norm $-2$.
\item 
$\Laminated_{n} $ is 
the laminated lattice of rank $n$. 
See~\cite[Chapter 6]{theCSbook} for the definition.
In particular, we have $\Leech=\Laminated_{24}$, 
and $\Lamsxtn$ is isomorphic to the Barnes--Wall lattice $\BW_{16}$.
\item $E_8$ is the root lattice of type $E_8$, that is,
a positive-definite even unimodular lattice of rank $8$,
which is unique up to isomorphism.
We have $E_8(2)\cong \Laminated_{8}$.
\item $\Sigmatwlv$ is 
a positive-definite \emph{odd} unimodular lattice of rank $12$
without vectors of norm $1$.
Note that $\Sigmatwlv$ is unique up to isomorphism.
See~\cite[Table 16.7]{theCSbook}.
\item $\typeILat_{1,9}$ is an odd unimodular hyperbolic lattice of rank $10$, 
which is unique up to isomorphism,
and is isomorphic to the orthogonal direct sum of
 a copy of $[1]$ and nine copies of $[-1]$.
 \item $\typeIILat_{1,9}$ is an even unimodular hyperbolic lattice of rank $10$,
 which is unique up to isomorphism,
 and is isomorphic to $U\oplus E_8(-1)$.
\item $V_{2m}$ is a lattice of rank $2$
with a Gram matrix 
\begin{equation}\label{eq:GramVm}
\left[\begin{array}{cc} 0&2 \\ 2 & -2m \end{array}\right].
\end{equation}
The lattices $V_{2m}$ and $V_{2m\sprime}$ are isomorphic 
if and only if $m\equiv m\sprime \bmod 2$.
\end{itemize}
\par
\medskip
Conway~\cite[Chapter 10]{theCSbook} also calculated the centralizers $\Cen(\dotz, g)$
of the involutions $g$ in $\dotz$.
We extend this result to the calculation of the centralizers $\Cen(\dotinf, \gamma)$
of the involutions $\gamma$ in $\dotinf$. See Proposition~\ref{prop:exactCen}.
\par
Using the computation of $\Cen(\dotinf, \gamma)$
and the Borcherds method~\cite{MR913200, MR1654763},
we can investigate 
the hyperbolic lattice $\Lts(\gamma, +)$.
In this Introduction, 
we present only the results concerning
 the lattices $\Lts(\gamma, +)=\typeILat_{1,9}$ (corresponding to the involution $\gamma_{16,120}$) 
and $\Lts(\gamma, +)=\typeIILat_{1,9}$ (corresponding to the involution $\gamma_{16,135}$).
Let $L$ be a hyperbolic lattice, 
and let $\PPP$ be a positive cone of $L$.
 Let $\FFF$ be a locally finite family of hyperplanes in $\PPP$. 
 An \emph{$\FFF$-chamber} is a closed polyhedral cone in $\PPP$ 
 bounded by members of $\FFF$ whose interior is disjoint from every member of $\FFF$.
\begin{theorem}\label{thm:main3}
Let $L$ be either $\typeILat_{1,9}$ or $\typeIILat_{1,9}$. 
Then there exist a subgroup $\vGam$ 
of $\OG(L,\PPP)$ and a locally finite family $\FFF$ of hyperplanes in $\PPP$
such that the following holds:
\begin{enumerate}[label={\rm(\roman*)}]
\item
The index of $\vGam$ in $\OG(L,\PPP)$ is 
\[
[\OG(L,\PPP): \vGam]=\begin{cases}
240 & \textrm{if $L=\typeILat_{1,9}$}, \\
71145 & \textrm{if $L=\typeIILat_{1,9}$}. \\
\end{cases}
\]
\item The family $\FFF$ is invariant under the action of $\vGam$, 
and $\vGam$ acts transitively on the set of $\FFF$-chambers.
\item Fixing an $\FFF$-chamber $D_0$,
we see that the stabilizer $\Stab_{\vGam}(D_0)$ of $D_0$ in $\vGam$
 acts transitively on the set of walls of $D_0$.
 \item 
 The group 
 $\Stab_{\vGam}(D_0)$ is an extension of a finite group by a free abelian group of rank $8$.
\end{enumerate}
\end{theorem}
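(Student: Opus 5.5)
The plan is to apply the Borcherds method to the primitive embedding $L=\Lts(\gamma,+)\inj \Lts$ with $\gamma=\gamma_{16,120}$ or $\gamma_{16,135}$; by Theorem~\ref{thm:main2} this sublattice is $\typeILat_{1,9}(2)$ or $\typeIILat_{1,9}(2)$, and rescaling by $1/2$ does not change $\OG(L,\PPP)$. Let $\PPP:=\PPPts\cap (L\tensor\RR)$. I take $\FFF$ to be the family of hyperplanes $(r)\sperp\cap\PPP$, where $r$ runs over the roots of $\Lts$ whose hyperplane actually meets $\PPP$. Since $\weyl_0$ is fixed by $\gamma$, we have $\weyl_0\in L$. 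Hence the Conway chamber $\ConChamz$ is $\gamma$-invariant, and $D_0:=\ConChamz\cap\PPP$ is an induced $\FFF$-chamber containing the isotropic vector $\weyl_0$ in its closure. This is why $D_0$ is not expected to be finite-sided: it has a cusp at $\weyl_0$, which is the situation the abstract calls new.

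First, I take $\vGam$ to be the image in $\OG(L,\PPP)$ of $\Cen(\OG(\Lts,\PPPts),\gamma)$, that is, the restrictions of isometries of $\Lts$ commuting with $\gamma$. This group preserves $\FFF$. For transitivity on chambers, I would use that for an induced chamber $D$ adjacent to $D_0$ across a wall coming from a Conway chamber $\ConCham(\weyl)$ with $\weyl$ related to $\weyl_0$ by reflections, one can produce an element of the centralizer mapping $D_0$ to $D$. Concretely, the intersection $\ConCham\cap \PPP$ is $\gamma$-invariant exactly when $\gamma$ stabilizes $\ConCham$. The elements of $W(\Lts)$ taking one such chamber to another can be chosen $\gamma$-equivariantly, because the product of reflections in a $\gamma$-orbit of Leech roots lies in the centralizer. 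This gives (ii) by connectivity of the chamber graph.

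Next comes (iii) and (iv). The stabilizer $\Stab_{\vGam}(D_0)$ is the image of $\Cen(\dotinf,\gamma)$, which I compute via Proposition~\ref{prop:exactCen}. It contains translations by $\tau\in\Leech$ commuting with $\gamma$, that is, $\Leech(g,+)$ up to the twisting by $\tau_{120}$ or $\tau_{135}$. Since $\rank\Leech(\vep_{16},+)=8$, this is a free abelian group of rank $8$, and the quotient lies in the finite group $\Cen(\dotz,\vep_{16})$. This gives (iv).

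The walls of $D_0$ are the hyperplanes $(\pr_+ r)\sperp$ for Leech roots $r=\LeechRoot{\lambda}$ whose projections are orthogonal to something in the interior of $D_0$. I would classify these walls modulo the translations in $\Leech(g,+)$ and the finite part. This reduces to a finite computation over Leech vectors $\lambda$ modulo $\Leech(g,+)$, and I expect to show that a single orbit remains, which gives (iii).

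For (i), I would compute $[\OG(L,\PPP):\vGam]$ through discriminant forms. The centralizer image is the subgroup of $\OG(L,\PPP)$ of isometries that extend to $\Lts$ by gluing with isometries of $\Lts(\gamma,-)$. This is the preimage of the image of $\OG(\Lts(\gamma,-))$ in $\OG(q_L)\cong\OG(q_{-})$. Since $L(1/2)$ is unimodular, $q_L$ is a quadratic or bilinear form on $\FF_2^{10}$, and $\OG(L)\to\OG(q_L)$ is surjective onto $\OG^{\pm}$-type groups. The index equals $|\OG(q_L)|$ divided by the image of $\OG(\Lts(\gamma,-))$. For $\typeIILat_{1,9}(2)$ the form is $q_{E_8}$ over $\FF_2$, with $|\OG^+_{10}(2)|$, and $71145$ should come from this ratio. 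The $240$ for $\typeILat_{1,9}$ should arise similarly.

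The main obstacle will be establishing (iii), and that the walls correspond to finitely many orbits at all, with infinitely many walls accumulating at the cusp $\weyl_0$. This requires a careful lattice computation in $\Leech$ modulo $\Leech(\vep_{16},+)$, and I would attempt it by computer enumeration of short vectors.
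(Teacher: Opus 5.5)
There is a genuine gap in (ii), and it occurs exactly in the case $L=\typeILat_{1,9}$. You justify transitivity by claiming that the product of the reflections in a $\gamma$-orbit of Leech roots lies in the centralizer. Over a wall of $D_0$ the roots are $r=\LeechRoot{\lambda}$ and $r^\gamma=\LeechRoot{\lambda^{\vep}+\tau}$, and $\intf{r,r^\gamma}=\frac12(2\lambda_{-}-\tau)^2-2=2n(\lambda_{-})-2$. For $\tau=\tauI$ every element of $\tilXitau$ has norm $3/2$, so $\intf{r,r^\gamma}=1$. The two roots then span an $A_2$, so $s_rs_{r^\gamma}$ has order $3$. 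Conjugation by $\gamma$ sends it to its inverse, so it is not in the centralizer and does not even preserve $\Lplus$.

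The element you want is the longest element of the finite parabolic subgroup generated by the Leech roots of $\ConChamz$ vanishing at a generic point $q$ of the wall. Here that element is $s_rs_{r^\gamma}s_r=s_{r+r^\gamma}$, the reflection in the $\gamma$-invariant $(-2)$-vector $r+r^\gamma\in\Lplus$. Since $\gamma$ fixes $q$ and permutes these simple roots, it commutes with the longest element. That element maps $\ConChamz$ to the Conway chamber containing the points of $\PPPplus$ just across the wall.

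With this repair, your local argument plus connectivity is a valid alternative to Proposition~\ref{prop:PhiSimple}. The paper instead pulls an arbitrary chamber back by some $g\in W(\Lts)$ and needs the classification in Theorem~\ref{thm:main2} to see that $g\inv\gamma g$ is $\dotinf$-conjugate to $\gamma$; your local route avoids that classification.

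Either route needs a fact you never use: $\Lminus$ has no $(-2)$-vectors. Consequently no root hyperplane contains $\PPPplus$, and every $\FFF$-chamber contains interior points of a unique Conway chamber (Proposition~\ref{prop:interior}). Your criterion ``$\ConCham\cap\PPP$ is $\gamma$-invariant iff $\gamma$ stabilizes $\ConCham$'' is not the right one, since that intersection is always fixed pointwise. The no-roots fact is also what justifies your unproved identification of $\Stab_{\vGam}(D_0)$ with the image of $\Cen(\dotinf,\gamma)$ (Proposition~\ref{prop:CenStab}). You also need an explicit interior point to know that $D_0$ is an $\FFF$-chamber (Proposition~\ref{prop:D0}).

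The other parts follow the paper's route but stop short at the decisive points.

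In (iii), the reduction is not to ``$\lambda$ modulo $\Leech(\vep_{16},+)$'', since that quotient has rank $16$. Finiteness comes from requiring $\prplus(\LeechRoot{\lambda})$ to have negative norm, i.e.\ $(\lambda_{-}-\tau/2)^2<2$. This confines $\lambda_{-}$ to the finite set $\Xitau$, with $512$ resp.\ $32$ elements. Once you check that $\Sigma_{\vep}([\tau])$ (already $K_{16}$) is transitive on $\Xitau$, $\MMM(D_0)$ is a single $\Cen(\dotinf,\gamma)$-orbit. Its nonempty invariant subset $\tilwalls(D_0)$ therefore equals it, and (iii) follows.

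In (iv), add that the kernel of $\Cen(\dotinf,\gamma)\to\vGam$ embeds in the finite group $\OG(\Lminus)$, so the torsion-free $\Leech(\vep_{16},+)\cong\ZZ^8$ injects.

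In (i):
\begin{itemize}
\item Surjectivity of $\etaplus$ is a hypothesis of Proposition~\ref{prop:vGamindex} that you assert rather than prove; the paper checks it using the reflections of the Vinberg diagrams.
\item $\qplus$ is $u^{\oplus 5}$ for $\typeIILat_{1,9}(2)$, not ``$q_{E_8}$''. For $\typeILat_{1,9}(2)$ it is $(a\oplus b)^{\oplus 5}$, which is not $\ZZ/2\ZZ$-valued, so ``$\OG^{\pm}$-type'' is inaccurate.
\item The numbers $240$ and $71145$ only come out after computing $|\Image\etaminus|$ in $\OG(\qminus)$, which you leave undone.
\end{itemize}
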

Thus we obtained a $\vGam$-invariant tessellation of the cone $\PPP$ by polyhedral cones.
\begin{corollary}\label{cor:CorOfmain3}
Let $g\in\vGam$ be an element that maps $D_0$ to an $\FFF$-chamber adjacent to $D_0$. 
Then $\vGam$ is generated by $\Stab_{\vGam}(D_0)$ together with $g$.
\end{corollary}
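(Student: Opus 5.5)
The proof is the standard chamber-walking argument for a group acting on a tessellation, using Theorem~\ref{thm:main3}(ii) and~(iii). Put $H:=\angs{\Stab_{\vGam}(D_0), g}\subseteq \vGam$. We will show that $H$ acts transitively on the set of $\FFF$-chambers. Granting this, take any $h\in\vGam$. There is $h'\in H$ with $D_0^{h}=D_0^{h'}$, so $h h'^{-1}\in\Stab_{\vGam}(D_0)\subseteq H$, and therefore $h\in H$.

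\textbf{Step 1: every chamber adjacent to $D_0$ lies in the $H$-orbit of $D_0$.} Let $D$ be adjacent to $D_0$ across a wall $w$. Then $D$ is the unique $\FFF$-chamber on the other side of $w$ sharing the facet $w\cap D_0$ with $D_0$. Similarly, $D_1:=D_0^{g}$ is adjacent to $D_0$ across some wall $w_1$. By Theorem~\ref{thm:main3}(iii) there is $s\in\Stab_{\vGam}(D_0)$ with $w_1^{s}=w$. Since $\FFF$ is $\vGam$-invariant, $s$ maps $D_1$ to the chamber adjacent to $D_0^{s}=D_0$ across $w$, that is, $D_1^{s}=D$. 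Hence $D=D_0^{gs}$ with $gs\in H$.

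\textbf{Step 2: propagate along galleries.} First we need that any two $\FFF$-chambers $D_0$ and $D'$ are joined by a finite gallery $D_0=E_0,E_1,\dots,E_k=D'$ of successively adjacent chambers. To see this, join an interior point of $D_0$ to an interior point of $D'$ by a segment in $\PPP$ chosen generically, so that it avoids all codimension-$2$ intersections of members of $\FFF$. The segment is compact and $\FFF$ is locally finite, so it meets only finitely many hyperplanes. It therefore crosses only finitely many chambers, and it passes between consecutive ones through facets. This is the one point that needs care, because the chambers have infinitely many walls. Local finiteness of $\FFF$ in $\PPP$ is exactly the input that keeps the gallery finite.

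Now induct on $k$. Suppose $E_i=D_0^{h_i}$ with $h_i\in H$. Then $E_{i+1}^{h_i^{-1}}$ is adjacent to $D_0$, since adjacency is preserved by $\vGam$. By Step~1 it equals $D_0^{h}$ for some $h\in H$, so $E_{i+1}=D_0^{h h_i}$ with $hh_i\in H$. This proves that $H$ is transitive on chambers.

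The main obstacle is only the gallery-connectedness in Step~2. I would handle it by the generic-segment argument, using that each wall of an $\FFF$-chamber is a facet, so crossing a hyperplane of $\FFF$ at a generic point passes between two adjacent chambers.
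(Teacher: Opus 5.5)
Your proof is correct and follows the paper's route. The corollary is Proposition~\ref{prop:observation} specialized to the case of a single $\Stab_{\vGam}(D_0)$-orbit of walls, given by Theorem~\ref{thm:main3}(iii): walk along a gallery from $D_0$ to $D_0^{\varphi}$, then correct by an element of $\Stab_{\vGam}(D_0)$. Your generic-segment argument, which uses convexity of $\PPP$ and local finiteness of $\FFF$, simply makes explicit the existence of finite galleries that the paper takes from the connectedness of $\PPP_S$.
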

As mentioned above, we use the Borcherds method for the proof. 
So far, 
 this method has been applied only in situations where each $\FFF$-chamber has finitely many walls. 
 The novelty of our computation is that 
 the $\FFF$-chambers have infinitely many walls. 
%
\subsection{Motivation from algebraic geometry}\label{subsec:alggeom}
The Borcherds method~\cite{MR913200, MR1654763}
is applied to the computation of automorphism groups of K3 surfaces and Enriques surfaces.
The first geometric application of this method was given 
by Kondo~\cite{MR1618132} to Jacobian Kummer surfaces.
See~\cite{MR3456710, MR4759604} for generalization of the method and computational details,
and~\cite{MR4498439, MR4389670} for applications to Enriques surfaces.
The Borcherds method is based on the result of Conway~\cite{MR690711} on 
the hyperbolic lattice $\Lts$. 
We can interpret Conway's result 
as a statement about the nef-and-big cone of a \emph{virtual} K3 surface $\XX$
 whose N\'eron--Severi lattice $S_{\XX}$ is isomorphic to $\Lts$.
 In fact,
in~\cite{MR4674939}, 
we regard the Leech lattice as the Mordell--Weil lattice~\cite{MR1081832} of an elliptic fibration of this \emph{non-existing} $\XX$, 
and 
obtained a new construction of the Leech lattice
by considering other elliptic fibrations on  $\XX$.
In this context, 
if we ignore  the question of what the period of $\XX$ should be, 
we can consider $\dotinf$ as 
the automorphism group of $\XX$,
and our result in this paper can be regarded as the classification of involutions of $\XX$.
Then 
the conjugacy class of the involution $\enrinvol:=\gamma_{16, 135}$ can be regarded 
 as the conjugacy class of Enriques involutions of $\XX$,
 because of the following result of Keum~\cite{MR1060704}.
 \begin{proposition}
 Let $X$ be a complex $K3$ surface
with the N\'eron--Severi lattice $S_X$.
 Then an involution $\iota$ of $X$ is an Enriques involution
 if and only if the invariant sublattice $S_X(\iota, +)$ of 
 the action of $\iota$ on $S_X$ 
 is isomorphic to $\typeIILat_{1,9} (2)$,
 and its orthogonal complement $S_X(\iota, -)$
contains no $(-2)$-vectors.
 \qed
 \end{proposition}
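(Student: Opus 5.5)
The plan is to go through the quotient surface for the forward direction, and through Nikulin's theory of lattice invariants of involutions on K3 surfaces for the converse. The key intermediate facts are that an Enriques involution acts as $-1$ on $H^{2,0}(X)$, and that the topological invariant lattice $H^2(X,\ZZ)(\iota,+)$ is $\pi^*H^2(Y,\ZZ)_{\mathrm{free}}\cong \typeIILat_{1,9}(2)$.

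\emph{Only if.} Suppose $\iota$ is fixed-point-free and $\pi\colon X\to Y=X/\iota$ is the quotient, an Enriques surface.
\begin{enumerate}[label=(\arabic*)]
\item Since $p_g(Y)=0$, $\iota$ acts on $H^{2,0}(X)$ by $-1$. Hence $\iota$ acts by $-1$ on the transcendental lattice $T_X$, and so $H^2(X,\ZZ)(\iota,+)\subset S_X$.
\item The lattice $H^2(Y,\ZZ)_{\mathrm{free}}$ is $\typeIILat_{1,9}$, and $\pi^*$ multiplies the form by $2$. By the Lefschetz fixed point formula the invariant part has rank $10$, so $\pi^*$ identifies $H^2(Y,\ZZ)_{\mathrm{free}}(2)$ with the primitive lattice $H^2(X,\ZZ)(\iota,+)$. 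By (1) this lattice equals $S_X(\iota,+)$.
\item Suppose $\delta\in S_X(\iota,-)$ had $\delta^2=-2$. By Riemann--Roch, $\delta$ or $-\delta$ is effective, say $\delta$. Then $\delta^{\iota}=-\delta$ is also effective, which is absurd since $\delta\ne 0$.
\end{enumerate}

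\emph{If.} Assume $S_X(\iota,+)\cong\typeIILat_{1,9}(2)$ and $S_X(\iota,-)$ has no $(-2)$-vectors. I first separate two cases.

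\emph{Non-symplectic case.} Here $\iota$ acts by $-1$ on $T_X$, so $H^2(X,\ZZ)(\iota,+)=S_X(\iota,+)$. This is $2$-elementary with Nikulin invariants $(r,a,\delta)=(10,10,0)$. Nikulin's description of the fixed locus of a non-symplectic involution in terms of $(r,a,\delta)$ then says the fixed locus is empty. So $\iota$ is an Enriques involution.

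\emph{Symplectic case.} This must be excluded, and it is the step I expect to be the main obstacle. For a symplectic involution, Nikulin gives $H^2(X,\ZZ)(\iota,-)\cong E_8(-2)\subset S_X$ and $H^2(X,\ZZ)(\iota,+)\cong U^3\oplus E_8(-2)\supset T_X$. Moreover, $E_8(-2)$ is glued with index $2^8$ to the invariant lattice in $H^2(X,\ZZ)$. Every glue vector $(x+y)/2$ with $x\in E_8(-2)$ has $y$ algebraic, so the full gluing already happens inside $S_X$. Therefore $A_{S_X(\iota,+)}$ must contain a subgroup anti-isometric to $A_{E_8(-2)}$, whose orthogonal complement glues to $A_{T_X}$.

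I would first attempt a contradiction by comparing the discriminant forms $q_{\typeIILat_{1,9}(2)}\cong u(2)^{5}$ and $q_{U^3\oplus E_8(-2)}$, together with the rank constraint $\rho(X)=18$. If this does not produce a contradiction directly, the fallback is to use the hypothesis on $S_X(\iota,-)$ more seriously. Choose an $\iota$-invariant ample class $h\in S_X(\iota,+)$, and analyse the eight fixed points, for example via the resolution of $X/\iota$ and its eight nodal curves. This should show that the symplectic case is incompatible with the invariant lattice being exactly $\typeIILat_{1,9}(2)$ with the stated gluing.

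Since Keum's result is classical, I would at this step also check against the original argument in~\cite{MR1060704}.
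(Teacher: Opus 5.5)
Your \emph{only if} direction and the non-symplectic case of the \emph{if} direction are the standard Keum--Nikulin argument; the paper gives no proof of its own and only cites Keum. One small repair is needed in step (2). A rank count does not show that $\pi^*H^2(Y,\ZZ)_{\mathrm{free}}$ is primitive. You need that $\pi^*\colon H^2(Y,\ZZ)\to H^2(X,\ZZ)^{\iota}$ is surjective, which follows for instance from the Cartan--Leray spectral sequence, since $H^1(X,\ZZ)=0$ and $H^3(\ZZ/2\ZZ,\ZZ)=0$.

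The genuine gap is the symplectic case, and it cannot be closed. Read literally, with $S_X(\iota,\pm)$ and no assumption on the action on $T_X$, the \emph{if} direction is false.
First, the hypothesis on $(-2)$-vectors gives nothing in this case, because $S_X(\iota,-)=H^2(X,\ZZ)(\iota,-)\cong E_8(-2)$ never contains $(-2)$-vectors.
Second, your claim that every glue vector has algebraic $y$ is unjustified and false in general: for $\rho(X)=9$ the glue inside $S_X$ has order at most $2$.

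Here is a counterexample.
\begin{itemize}
\item Let $H=U_1\oplus U_2\oplus U_3\oplus E_8(-1)\oplus E_8(-1)$, with standard bases $e_i,f_i$ of $U_i$. Put $T:=\langle e_2,\,2f_2-e_1\rangle\oplus U_3\cong U(2)\oplus U$. Then $T^\perp=\langle e_1,\,2f_1+e_2\rangle\oplus E_8(-1)^{2}\cong U(2)\oplus E_8(-1)^2$.
\item A generic period in $T\otimes\mathbb{C}$ gives a K3 surface $X$ with $T_X=T$ and $S_X=T^\perp$.
\item Let $\phi$ be the identity on $U_1\oplus U_2\oplus U_3$ and swap the two $E_8(-1)$ summands. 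It is a Hodge isometry fixing $T_X$ pointwise. We have $S_X(\phi,+)=U(2)\oplus\{(e,e)\}\cong U(2)\oplus E_8(-2)\cong\typeIILat_{1,9}(2)$ and $S_X(\phi,-)=\{(e,-e)\}\cong E_8(-2)$.
\item No $(-2)$-vector is orthogonal to $S_X(\phi,+)$, so $\phi$ fixes a $W(S_X)$-chamber. After conjugating by some $w\in W(S_X)$, it preserves the ample cone.
\item By the strong Torelli theorem, $w\phi w^{-1}=\iota^*$ for a symplectic involution $\iota$. This $\iota$ satisfies both hypotheses but has $8$ fixed points.
\end{itemize}
Moreover, $j:=\id_{S_X}\oplus(-\id_{T_X})$ extends to $H$, and $\iota\circ j$ is an Enriques involution with the same restriction to $S_X$. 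So no information on $S_X$ alone can separate the two cases, and neither your discriminant-form comparison nor your fixed-point analysis can succeed.

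The statement needs one of two changes: either assume $\iota$ is non-symplectic (equivalently $\iota^*=-\id$ on $T_X$), or replace $S_X(\iota,+)$ by $H^2(X,\ZZ)(\iota,+)$. With either change, your argument (plus the repair to step (2)) is a complete proof.
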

Let $\YY=\XX/\angs{\enrinvol}$ denote the \emph{virtual} Enriques surface
corresponding to the Enriques involution $\enrinvol$.
The inclusion 
\[
\Lts(\enrinvol, +)=S_{\YY}(2)\cong \typeIILat_{1,9} (2)\;\; \inj \;\;S_{\XX}=\Lts
\]
is the embedding of the N\'eron--Severi lattices
induced by the \'etale double covering $\XX\to \YY$.
This embedding 
has already appeared in~\cite{MR4389670} as one of the $17$ embeddings 
of $\typeIILat_{1,9} (2)$ into $\Lts$.
The embeddings in~\cite{MR4389670} other than this embedding 
have applications in the computation of the automorphism groups
of various Enriques surfaces in~\cite{MR4498439}.
We hope that this new embedding  $\Lts(\enrinvol, +)\inj \Lts$
has some new geometric and/or lattice theoretic applications.
\par
Since the seminal work of Nikulin~\cite{MR633160}, 
involutions of K3 surfaces have been extensively studied.
The problem of enumerating all Enriques involutions 
(up to conjugacy) of a given K3 surface was
studied by several authors.
For example, Ohashi~\cite{MR2552959},
building on the work of Kondo~\cite{MR1618132}, 
 classified all Enriques involutions on Jacobian Kummer surfaces. 
 See also~\cite{MR4288644}.
\par
See  Remark~\ref{rem:further}
for a further exploration of this heuristic analogy.
\subsection{Plan of the paper}\label{subsec:plan}
In Section~\ref{sec:prelim},
we fix notation and terminology concerning lattices, 
review the theory of discriminant forms
due to Nikulin~\cite{MR525944}, 
and explain some computational methods.
In Section~\ref{sec:proofofmain1},
we review Conway's results~\cite[Chapter 10]{theCSbook} on $\dotz$
and present the nine conjugacy classes of involutions in $\dotinf$.
Theorem~\ref{thm:main1} is proved in this section.
In Section~\ref{sec:proofThm2},
we investigate the action of involutions of $\dotinf$ on $\Lts$,
and prove Theorem~\ref{thm:main2}.
In Section~\ref{sec:Borcherds},
we apply the Borcherds method to the invariant sublattices of the involutions 
of $\dotinf$ in $\Lts$.
In Section~\ref{sec:wallsD0},
we present the detailed computational results obtained by
 the Borcherds method.
 Theorem~\ref{thm:main3} and similar results for other invariant lattices are proved in this section.
\par
We have used GAP~\cite{GAP4} 
for the proof of our main results.
Computational data are available from~\cite{affConwayCompdata}.
\section{Preliminaries}\label{sec:prelim}
In Section~\ref{subsec:notationlat}, 
we fix notation and terminology about lattices.
In Section~\ref{subsec:discfinvol},
we apply the theory of discriminant forms
to  involutions in the orthogonal group of a lattice.
In Section~\ref{subsec:computational},
we explain some computational methods 
for finite groups.
Since this section is preliminary and technical, the reader may skip it on a first reading and return to it later when necessary.
\subsection{Notation and terminology about lattices}\label{subsec:notationlat}
For a lattice $L$ and a subset $A$ of $L\tensor \RR$,
we put
\[
\OG(L, A):=\set{g\in \OG(L)}{A^g=A}.
\]
As was already noted,
we say that a lattice of rank $n>1$ is \emph{hyperbolic}
if the signature of the associated real quadratic space is $(1, n-1)$.
For a hyperbolic lattice $L$ with a positive cone $\PPP$,
the group
$\OG(L, \PPP)$ is a subgroup of the group of isometries of the associated hyperbolic space.
\par 
A vector $r\in L$ is called a \emph{$(-2)$-vector} if $\intf{r, r}=-2$.
It defines a reflection 
$s_r\in \OG(L)$ by $x^{s_r}:=x+\intf{x, r} r$.
For an even hyperbolic lattice $L$ with a positive cone $\PPP$,
 the \emph{Weyl group} $W(L)$
is the subgroup of $\OG(L, \PPP)$
generated by all reflections $s_r$ with respect to $(-2)$-vectors $r$.
For a vector $v \in L\tensor\QQ$ with $\intf{v, v}<0$,
we put 
\[
(v)\sperp:=\set{x\in \PPP}{\intf{x, v}=0}.
\]
If $r$ is a $(-2)$-vector, then $(r)^\perp$ is the mirror of $s_r$.
\begin{definition}
A \emph{standard fundamental domain} of the action of $W(L)$ on $\PPP$
is the closure in $ \PPP$ of a connected component of 
\[
\PPP\;\;\setminus \;\;\bigcup\; (r)\sperp, 
\]
where the union is taken over all $(-2)$-vectors $r$.
\end{definition}
We have $\OG(L, \PPP)=W(L)\rtimes \OG(L, N)$
for any standard fundamental domain $N$ of the action of $W(L)$ on $\PPP$.
\subsection{Discriminant forms and involutions}\label{subsec:discfinvol}
\newcommand{\dLat}{M}
\newcommand{\dLatplus}{M_+}
\newcommand{\dLatminus}{M_-}
Let $\dLat$ be an even lattice, and 
let $\dLat\dual:=\Hom(\dLat, \ZZ)$ 
 denote its \emph{dual lattice}.
Identifying $\dLat\dual$ as a subset of $\dLat\tensor \QQ$,
we let $\OG(\dLat)$ act on $\dLat\dual$ from the right
(\emph{not} contragrediently).
The intersection form $\dLat \times \dLat \to \ZZ$
extends to 
an intersection form $\dLat\dual \times \dLat\dual \to \QQ$.
We define 
\[
q_\dLat\;\colon\; \dLat\dual/\dLat \to \QQ/2\ZZ
\]
by $q_\dLat(v \bmod \dLat):=\intf{v, v} \bmod 2\ZZ$ for $v\in \dLat\dual$.
We call $\dLat\dual/\dLat$ the \emph{discriminant group} of $\dLat$,
and $q_\dLat$ the \emph{discriminant form} of $\dLat$.
We denote by $\OG(q_\dLat)$ the automorphism group of 
the finite quadratic form $q_\dLat$.
Then we have a natural homomorphism
\[
\eta_\dLat\;\colon\; \OG(\dLat) \to \OG(q_\dLat).
\]
In this paper,
we encounter many finite quadratic forms 
such that the underlying groups are 
direct sums of finite copies of $\ZZ/2\ZZ$. 
 We use the following notation:
\begin{equation}\label{eq:uvab}
\begin{array}{l}
u:=\left(\;(\ZZ/2\ZZ)^2, \;\left[\begin{array}{cc}0&1/2 \\1/2 &0\end{array}\right]\;\right),\quad 
v:=\left(\;(\ZZ/2\ZZ)^2, \;\left[\begin{array}{cc}1&1/2 \\1/2 &1\end{array}\right]\;\right),\\
\\
a:=\left(\ZZ/2\ZZ, \;\left[1/2\right]\right),\quad
b:=\left(\ZZ/2\ZZ, \;\left[3/2\right]\right).
\end{array}
\end{equation}
\begin{remark}\label{rem:isomqs}
We put $q_{[\mu,\nu, \alpha, \beta]}:=u^{\mu}\oplus v^{\nu} \oplus a^{\alpha}\oplus b^{\beta}$. 
Then we have the following isomorphisms of finite quadratic forms:
%
%
\[
\renewcommand{\arraystretch}{1.1}
\begin{array}{ll}
q_{[ 2, 0, 0, 0]}\cong q_{[ 0, 2, 0, 0]}, &
q_{[ 0, 0, 4, 0]}\cong q_{[ 0, 0, 0, 4]}, \\
q_{[ 1, 0, 1, 0]}\cong q_{[ 0, 0, 2, 1 ]}, &
q_{[ 1, 0, 0, 1]}\cong q_{[ 0, 0, 1, 2 ]},\\
q_{[ 0, 1, 1, 0 ]}\cong q_{[ 0, 0, 0, 3 ]}, &
q_{[ 0, 1, 0, 1 ]}\cong q_{[ 0, 0, 3, 0]}.
\end{array}
\]
\end{remark}
We apply 
Nikulin's theory of discriminant forms~\cite{MR525944}
to the study of involutions in $\OG(\dLat)$.
Let $\vep \in \OG(\dLat)$ be an involution, 
and let
$\dLatplus:=\dLat(\vep,+)$ and
$\dLatminus:=\dLat(\vep,-)$
be the sublattices defined by~\eqref{eq:Mgpm}.
Then we have an embedding
\begin{equation}\label{eq:iotaL}
\Cen(\OG(\dLat), \vep)\;\inj\; \OG(\dLatplus)\times \OG(\dLatminus)
\end{equation}
by the restrictions $g\mapsto (g|\dLatplus, g|\dLatminus)$.
Assume that $\dLat$ is unimodular.
Then the submodule
\[
\dLat/(\dLatplus\oplus \dLatminus) \;\subset\; (\dLatplus\dual/\dLatplus)\times (\dLatminus\dual/\dLatminus)
\]
is the graph of an anti-isomorphism
$q_{\dLatplus} \cong -q_{\dLatminus}$
of finite quadratic forms,
which 
induces an isomorphism
\[
j_\dLat\;\colon\; \OG(q_{\dLatplus}) \cong \OG(q_{\dLatminus}).
\]
Consequently, the embedding~\eqref{eq:iotaL} identifies
$\Cen(\OG(\dLat),\vep)$ with
\begin{equation*}\label{eq:CenL}
\set{(\gplus, \gminus)\in \OG(\dLatplus)\times \OG(\dLatminus)}{j_\dLat(\eta_{\dLatplus}(\gplus))=\eta_{\dLatminus}(\gminus)},
\end{equation*}
that is, we have the following fiber-product diagram:
\begin{equation}\label{eq:groupfibprodCenvep}
\renewcommand{\arraystretch}{1.4}
\begin{array}{ccc}
\Cen(\OG(\dLat), \vep) &\to & \OG(\dLatminus) \\
\downarrow & \square& \downarrow \\
\OG(\dLatplus) & \to & \OG(q_{\dLatplus}) \cong \OG(q_{\dLatminus}).
\end{array}
\end{equation}
\subsection{Computational method}\label{subsec:computational}
In some arguments, 
we need to determine, for a positive- or negative-definite lattice $M$,  the sizes of $\OG(M)$,
$\OG(q_M)$, and/or 
the image of the natural homomorphism
$\eta_M\colon \OG(M) \to \OG(q_M)$.
In this section, we explain the method used in these computations.
\par
Let $M$ be a positive-definite lattice of rank $m$.
A finite generating set 
and the size of $\OG(M)$ are obtained by the famous Plesken-Souvignier algorithm~\cite{MR1484483}.
A generating set and the size of the automorphism group $\OG(q)$ of a finite quadratic form 
$q$ can also be computed using a similar stabilizer-chain method.
\begin{remark}
A closed formula for the size of $\OG(q)$ is given in~\cite[Theorem 5.2]{MR4730253}.
\end{remark}
%
%
The size of the image of 
the natural homomorphism $\eta_M\colon \OG(M)\to \OG(q_M)$ is calculated as follows.
Let $A:=M\dual/M$ denote the discriminant group.
We first compute a generating set $\{g_1, \dots, g_N\}$ of $\OG(M)$,
together with their images $\eta_M(g_i)\in \OG(q_M)$ under $\eta_M$.
Each $\eta_M(g_i)$ induces a permutation of the finite set $A$.
The size of the image of $\eta_M$ is 
then equal to the size of the subgroup of 
the symmetric group $\SSSS(A)$ generated by these permutations,
which can be computed using the Schreier--Sims algorithm.
\begin{remark}\label{rem:GAPadvantage}
Since the implementation of the Schreier--Sims algorithm in {\tt GAP}~\cite{GAP4} is enhanced by randomized methods,
this computation by {\tt GAP} is generally very fast.
We take full advantage of this high performance of {\tt GAP} in our study.
\end{remark}
We can calculate  $\Ker \eta_M$
as follows.
We fix a basis 
$b_1, \dots, b_{m}$ of $M\dual$, 
and identify elements $g$ of $\OG(M\dual)=\OG(M)$
with tuples $[b_1\sprime, \dots, b_{m}\sprime]$ of vectors 
$b_i\sprime\in M\dual$
satisfying
\[
\intf{b_i, b_j}=\intf{b_i\sprime, b_j\sprime} \;\;(i, j=1, \dots, m)
\]
via $b_i^g=b_i\sprime$ for $i=1, \dots, m$.
Then 
the subgroup
$\Ker \eta_M$ of $\OG(M)$
corresponds to the set of 
all tuples $[b_1\sprime, \dots, b_{m}\sprime]$
such that $b_i-b_i\sprime\in M$ holds for $i=1, \dots, m$.
Enumerating such tuples $[b_1\sprime, \dots, b_{m}\sprime]$
by the stabilizer chain method as in the Plesken-Souvignier algorithm, 
we obtain the size of $\Ker \eta_M$
and a finite generating set of $\Ker \eta_M$.
%
%
%
\section{Proof of Theorem~\ref{thm:main1}}\label{sec:proofofmain1}
In Section~\ref{subsec:zinf}, 
we investigate the relationship between involutions of $\dotinf$ and those of $\dotz$.
We also describe the centralizer $\Cen(\dotinf, \gamma)$
for an involution $\gamma\in \dotinf$.
In Section~\ref{subsec:ConwayInvols}, 
we review Conway's result~\cite[Chapter 10]{theCSbook} on $\dotz$, 
and present the four involutions $\vep_n$ ($n=8,12,16,24$) 
 in $\dotz$.
In Sections~\ref{subsec:vep8}--\ref{subsec:vep24}, 
we study the conjugacy class $[\vep_n]$ in $\dotz$ for each $n$ 
and determine the corresponding conjugacy classes in $\dotinf$. This completes the proof of Theorem~\ref{thm:main1}.
\subsection{Involutions of $\dotz$ and of $\dotinf$}\label{subsec:zinf}
We use the following notation.
For a group $G$, let $\Invols(G)$ denote the set of conjugacy classes of involutions in $G$.
For an involution $g\in G$, let $[g]\in \Invols(G)$ denote the conjugacy class of $g$.
\par
The product in $\dotinf$ is given by 
\[
(g_1, \tau_1)(g_2, \tau_2)=(g_1g_2, \tau_1^{g_2}+\tau_2).
\]
Hence, for $(g, \tau)\in \dotinf$, we have 
\begin{equation}\label{eq:gtauisinvoliff}
\textrm{\;\;$(g, \tau)$ is an involution}
\;\Longleftrightarrow\;
\textrm{
$g\in \dotz$ is an involution and $\tau\in \Leech(g, -)$}.
\end{equation}
Moreover, we have 
\begin{equation}\label{eq:hsgthsinv}
(h, \sigma)(g, \tau)(h, \sigma)\inv
=(hgh\inv, (\tau+\sigma^g-\sigma)^{h\inv}).
\end{equation}
Hence 
two involutions $(g, \tau)$ and $(g\sprime, \tau\sprime)$ of $\dotinf$ are conjugate if and only if
\begin{equation}\label{eq:conjiff}
\exists\; h\in \dotz \;\;
\exists\; \sigma\in \Leech \;\;
\textrm{such that} \;\;
g\sprime=h g h\inv
\;\textrm{and}\; \tau\sp{\prime h}=\tau+\sigma^{g}-\sigma.
\end{equation}
Therefore the homomorphism $\dotinf \to \dotz$ given by $(g, \tau)\mapsto g$
induces a surjective mapping
\[
\Pi\;\colon\; \Invols(\dotinf)\surj \Invols(\dotz).
\]
Let $g\in \dotz$ be an involution.
The fiber of $\Pi$ over $[g]\in\Invols(\dotz)$ is described as follows.
We denote by 
\[
\pig\;\colon\; \Leech \to \Leech (g, -)
\]
the homomorphism defined by $\lambda\mapsto \lambda-\lambda^g$.
Note that $\pig$ is $\Cen(\dotz, g)$-equivariant,
and hence $\Cen(\dotz, g)$ acts on the cokernel of $\pig$.
For $\tau\in \Leech (g, -)$, 
we denote by $[\tau]$ the element $\tau+\Image(\pig)$ of $\Coker(\pig)$,
and by $\angs{\tau}\in \Coker(\pig)/\Cen(\dotz, g)$ the orbit of
$[\tau]$ under the action of $\Cen(\dotz, g)$ on $ \Coker(\pig)$.
\par
The following proposition has already been proved in the more general setting in \cite[Lemma 4.13]{MR4931809}. 
For the convenience of the reader, however, we provide a proof in our simpler setting.
\begin{proposition}\label{prop:bijf}
Suppose that $[(g_1, \tau_1)]\in \Pi\inv([g])$,
so that there exists an element $h_1\in \dotz$
such that $g_1h_1=h_1 g $ holds.
Note that $\tau_1\sp{h_1}\in \Leech(g, -)$.
Then the map
$[(g_1, \tau_1)] \mapsto \angs{\tau_1\sp{h_1}}$ 
is well defined, and 
yields a bijection 
\begin{equation}\label{eq:Piinvg}
f\;\;\colon\;\; \Pi\inv([g])\;\;\cong\;\; \Coker(\pig)/\Cen(\dotz, g).
\end{equation}
\end{proposition}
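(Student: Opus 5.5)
The plan is to reduce everything to the conjugacy criterion~\eqref{eq:conjiff}, after first normalizing representatives so that their linear part is exactly $g$. First we note that $\Image(\pig)\subset\Leech(g,-)$, since $(\lambda-\lambda^g)^g=\lambda^g-\lambda$, so $\Coker(\pig)$ makes sense. The centralizer $\Cen(\dotz,g)$ preserves $\Leech(g,-)$ and commutes with $\pig$, so it acts on $\Coker(\pig)$. Next, given $(g_1,\tau_1)$ with $g_1h_1=h_1g$, i.e.\ $g=h_1\inv g_1 h_1$, we check that $\tau_1^{h_1}\in\Leech(g,-)$. Indeed, with the right action, $(\tau_1^{h_1})^g=\tau_1^{h_1g}=\tau_1^{g_1h_1}=(-\tau_1)^{h_1}$. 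By~\eqref{eq:hsgthsinv} with $h=h_1\inv$ and $\sigma=0$, conjugating $(g_1,\tau_1)$ by $(h_1\inv,0)$ gives $(g,\tau_1^{h_1})$. So every class in $\Pi\inv([g])$ contains an element whose linear part is exactly $g$, and the map $f$ is just ``take the translation part of such a representative''.

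The heart of the argument is the following claim: for $\tau,\tau'\in\Leech(g,-)$, the involutions $(g,\tau)$ and $(g,\tau')$ are conjugate in $\dotinf$ if and only if $[\tau']$ and $[\tau]$ lie in the same $\Cen(\dotz,g)$-orbit in $\Coker(\pig)$. By~\eqref{eq:conjiff} with $g'=g$, conjugacy means there exist $h\in\dotz$ with $hgh\inv=g$, i.e.\ $h\in\Cen(\dotz,g)$, and $\sigma\in\Leech$ with $\tau'^h=\tau+\sigma^g-\sigma=\tau-\pig(\sigma)$. Since $\sigma$ ranges over all of $\Leech$, the elements $-\pig(\sigma)$ range over all of $\Image(\pig)$. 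So the condition reads $[\tau']^h=[\tau]$ for some $h\in\Cen(\dotz,g)$, which is exactly equality of orbits.

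Both parts of the proposition now follow. \emph{Well-definedness:} suppose $(g_1,\tau_1)$ and $(g_1',\tau_1')$ are conjugate, with auxiliary elements $h_1,h_1'$. Then $(g,\tau_1^{h_1})$ and $(g,\tau_1'^{h_1'})$ are conjugate, since each is conjugate to its original element. The claim then gives $\angs{\tau_1^{h_1}}=\angs{\tau_1'^{h_1'}}$. The same argument covers the independence from the choice of $h_1$: take $g_1'=g_1$ and $\tau_1'=\tau_1$ with a different $h_1'$. \emph{Injectivity} is the reverse direction of the claim, applied to the normalized representatives. \emph{Surjectivity} holds because for any $\tau\in\Leech(g,-)$, the pair $(g,\tau)$ is an involution by~\eqref{eq:gtauisinvoliff}, it lies over $[g]$, and we may take $h_1=1$, so that $f([(g,\tau)])=\angs{\tau}$.

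The only real obstacle is bookkeeping with the right action: getting the composition order in~\eqref{eq:hsgthsinv} right, and choosing $h=h_1\inv$ rather than $h_1$ in the normalization step. I would check these directly from the product rule $(g_1,\tau_1)(g_2,\tau_2)=(g_1g_2,\tau_1^{g_2}+\tau_2)$ before writing out the steps above.
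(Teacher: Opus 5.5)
Your proposal is correct and is essentially the paper's argument: both rest on~\eqref{eq:conjiff} specialized to $g'=g$, which identifies conjugacy of $(g,\tau)$ and $(g,\tau')$ with equality of $\Cen(\dotz,g)$-orbits in $\Coker(\pig)$. Injectivity and surjectivity are handled the same way, and the only difference is in the well-definedness step. The paper proves it by a direct computation with $k:=h_2\inv h_{21}h_1\in\Cen(\dotz,g)$, whereas you first normalize representatives by conjugating with $(h_1\inv,0)$ and then use the forward direction of your claim, which is a slightly cleaner packaging of the same computation.
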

\begin{proof}
Suppose that $[(g_2, \tau_2)]=[(g_1, \tau_1)] $.
Then, by~\eqref{eq:conjiff}, we have $h_{21}\in \dotz$ and $\sigma_{21}\in \Leech$ 
such that $g_2 h_{21}=h_{21} g_1$
and $\tau_2\sp{h_{21}}=\tau_1 +\sigma_{21}\sp{g_1}-\sigma_{21}$.
Let $h_2\in \dotz $ be an element 
such that $g_2 h_2=h_2 g $.
It is enough to show 
$\angs{\tau_2^{ h_2}}=\angs{\tau_1^{h_1}}$ holds.
Note that $k:=h_2\inv h_{21} h_1$ belongs to $\Cen(\dotz, g)$.
Then we have 
\[
\tau_2^{ h_2}=\tau_2^{ h_{21} h_1 k\inv}
=(\tau_1 +\sigma_{21}\sp{g_1}-\sigma_{21})^{ h_1 k\inv}
=(\tau_1\sp{ h_1}+(\sigma_{21}\sp{h_1 g}-\sigma_{21}\sp{h_1}))^{k\inv}.
\]
Thus $f$ is well-defined.
\par
Since $f([(g, \tau)])=\angs{\tau}$ for any $\tau\in \Leech (g, -)$, 
the surjectivity of $f$ is obvious.
Note that, if $[(g\sprime, \tau\sprime)]\in \Pi\inv([g])$,
then there exists $\tau_0\in \Coker(\pig)$ such that 
$[(g\sprime, \tau\sprime)]=[(g, \tau_0)]$, 
and hence
$f([(g\sprime, \tau\sprime)])=\angs{\tau_0}$.
Therefore, to show the injectivity of $f$,
it is enough to show that, 
for $\tau, \tau\sprime\in \Coker(\pig)$,
the equality $\angs{\tau}=\angs{\tau\sprime}$ implies 
$[(g, \tau)]=[(g, \tau\sprime)]$.
If $\angs{\tau}=\angs{\tau\sprime}$ holds,
then we have $k\in \Cen(\dotz, g)$ and $\sigma\in \Leech$
such that $\tau\sprime=\tau^k+\sigma-\sigma^g$.
Then $(k, \sigma)(g, \tau\sprime)(k, \sigma)\inv=(g, \tau)$ holds.
\end{proof}
Let $(g, \tau)$ be an involution in $\dotinf$.
We put
\[
\Sigma_g([\tau]) :=
\textrm{ the stabilizer of $[\tau]\in \Coker(\pig)$ in $\Cen(\dotz, g)$}.
\]
Then we have the following:
\begin{proposition}\label{prop:exactCen}
We have an exact sequence 
\begin{equation}\label{eq:exactCen}
0 \;\longrightarrow\; \Leech(g, +)\;\longrightarrow\; \Cen(\dotinf, (g, \tau)) 
\;\longrightarrow\;\Sigma_g([\tau]) \;\longrightarrow\; 1.
\end{equation}
\end{proposition}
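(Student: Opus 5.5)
The plan is to work directly with the explicit formula~\eqref{eq:hsgthsinv} for conjugation in $\dotinf=\dotz\ltimes\Leech$ and to read off both maps in~\eqref{eq:exactCen}. The map $\Cen(\dotinf,(g,\tau))\to\Sigma_g([\tau])$ will be the restriction of the projection $(h,\sigma)\mapsto h$. The map $\Leech(g,+)\to\Cen(\dotinf,(g,\tau))$ will be $\sigma\mapsto(1,\sigma)$.

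First, I describe the centralizer explicitly. By~\eqref{eq:hsgthsinv}, an element $(h,\sigma)$ commutes with $(g,\tau)$ if and only if
\[
hgh\inv=g \quad\text{and}\quad (\tau+\sigma^g-\sigma)^{h\inv}=\tau.
\]
The second condition is equivalent to $\tau^h=\tau+\sigma^g-\sigma$, that is, $\tau^h-\tau=-\pig(\sigma)\in\Image(\pig)$. Hence $h\in\Cen(\dotz,g)$ and $[\tau^h]=[\tau]$ in $\Coker(\pig)$. Since $\Cen(\dotz,g)$ acts on $\Coker(\pig)$ by $[\tau]\mapsto[\tau^h]$, this says $h\in\Sigma_g([\tau])$. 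So the projection $(h,\sigma)\mapsto h$ is a group homomorphism (it is the restriction of $\dotinf\to\dotz$) with image contained in $\Sigma_g([\tau])$.

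Next, I prove surjectivity. Let $h\in\Sigma_g([\tau])$. Then $\tau^h-\tau\in\Image(\pig)$, so there is $\lambda\in\Leech$ with $\tau^h-\tau=\lambda-\lambda^g$. Setting $\sigma:=-\lambda$ gives $\tau^h=\tau+\sigma^g-\sigma$, so $(h,\sigma)$ lies in the centralizer by the computation above.

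Then I compute the kernel. The kernel consists of the elements $(1,\sigma)$ with $\tau=\tau+\sigma^g-\sigma$, i.e.\ $\sigma^g=\sigma$, i.e.\ $\sigma\in\Leech(g,+)$. The map $\sigma\mapsto(1,\sigma)$ is an injective homomorphism, because $(1,\sigma_1)(1,\sigma_2)=(1,\sigma_1+\sigma_2)$ by the product formula. This gives exactness at every term.

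I do not expect a real obstacle. The only point needing care is keeping the conventions consistent: the action is on the right, and the sign in $\pig(\lambda)=\lambda-\lambda^g$ matters when choosing $\sigma$. It is also worth noting that the conjugation formula is used in the form $(h,\sigma)\gamma(h,\sigma)\inv$, so commuting with $\gamma$ is exactly the condition that this conjugate equals $\gamma$.
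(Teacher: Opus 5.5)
Your proposal is correct and follows essentially the same route as the paper: it uses the conjugation formula~\eqref{eq:hsgthsinv} to characterize the centralizer as the pairs $(h,\sigma)$ with $h\in\Cen(\dotz,g)$ and $\tau-\tau^h=\pig(\sigma)$. It then shows that the projection $(h,\sigma)\mapsto h$ has kernel $\set{(\id,\sigma)}{\sigma\in\Leech(g,+)}$ and image exactly $\Sigma_g([\tau])$. Your explicit choice $\sigma:=-\lambda$ in the surjectivity step simply makes the paper's ``if and only if'' argument concrete.
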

\begin{proof}
By~\eqref{eq:hsgthsinv}, it follows that 
\begin{equation}\label{eq:hsCen}
(h, \sigma)\in \Cen(\dotinf, (g, \tau))
\;\; \Longleftrightarrow\;\; 
\textrm{$h\in \Cen(\dotz, g)$ and $\tau-\tau^h=\pig(\sigma)$}.
\end{equation}
Therefore we have the projection 
$\Cen(\dotinf, (g, \tau))\to \Cen(\dotz, g)$
given by $(h, \sigma)\mapsto h$.
The kernel of this projection is equal to 
\[
\set{(\id, \lambda)}{\lambda\in \Leech(g, +)}\;\;\cong\;\; \Leech(g, +).
\]
On the other hand, by~\eqref{eq:hsCen}, 
an element $h$ of $\Cen(\dotz, g)$
belongs to the image of this projection if and only if 
there exists a vector $\sigma\in \Leech$ such that 
$\tau-\tau^h=\pig(\sigma)$, or equivalently, if and only if 
$h\in \Sigma_g([\tau]) $.
Thus we obtain the exact sequence~\eqref{eq:exactCen}.
\end{proof}
\subsection{Conway's result on involutions of $\dotz$}\label{subsec:ConwayInvols}
Conway investigated involutions of $\dotz$ and 
computed their centralizers in \cite[Chapter 10, Section 3.7]{theCSbook}.
In this section, we review 
Conway's result.
\par
Let $\Omega$ be the set
$\PP^1(\FF_{23})=\{\infty, 0,1,  \dots, 22\}$,
and let $\Golay\subset \PowOmega$ be the extended binary Golay code,
that is, the extended quadratic residue code over $\FF_{23}$.
We also use the Miracle Octad Generator (MOG)
to denote elements of $\PowOmega$.
The elements of $\Omega$ and the MOG positions are identified 
using the standard labeling~\cite[Figure~11.7]{theCSbook}, 
which we reproduce in Figure~\ref{fig:stdMOG}.
\begin{figure}
\begin{tikzpicture}[scale=.55, every node/.style={font=\small}]
\draw (0,0) rectangle (6,4);
\draw (2,0) -- (2,4);
\draw (4,0) -- (4,4);
\draw (0,2) -- (6,2);
\node at (0.5,3.5) {$0$};
\node at (0.5,2.5) {$19$};
\node at (0.5,1.5) {$15$};
\node at (0.5,0.5) {$5$};
\node at (1.5,3.5) {$\infty$};
\node at (1.5,2.5) {$3$};
\node at (1.5,1.5) {$6$};
\node at (1.5,0.5) {$9$};
\node at (2.5,3.5) {$1$};
\node at (2.5,2.5) {$20$};
\node at (2.5,1.5) {$14$};
\node at (2.5,0.5) {$21$};
\node at (3.5,3.5) {$11$};
\node at (3.5,2.5) {$4$};
\node at (3.5,1.5) {$16$};
\node at (3.5,0.5) {$13$};
\node at (4.5,3.5) {$2$};
\node at (4.5,2.5) {$10$};
\node at (4.5,1.5) {$17$};
\node at (4.5,0.5) {$7$};
\node at (5.5,3.5) {$22$};
\node at (5.5,2.5) {$18$};
\node at (5.5,1.5) {$8$};
\node at (5.5,0.5) {$12$};
\end{tikzpicture}
\vskip -.2cm
\caption{The standard MOG labeling}\label{fig:stdMOG}
\end{figure}
 For $n\ge 0$, 
we denote by $\Golay(n)$ the set of codewords $w\in \Golay$ of size $n$.
Note that $\Golay(n)\ne \emptyset$ if and only if $n\in \{0,8,12,16,24\}$.
For $S\in \PowOmega$, let $v_S\in \ZZ^{\Omega}$ denote the vector
\[
v_{S}(x):=
\begin{cases}
1 & \textrm{if $x\in S$}, \\
0 & \textrm{if $x\notin S$}.
\end{cases}
\]
When $S=\{i\}$ for $i\in \Omega$, we write $v_i$ instead of $v_{\{i\}}$.
Let $\Leech$ be the submodule of $\ZZ^{\Omega}$ generated by 
the vectors $2v_C$,
where $C$ runs through $\Golay(8)$, and 
the vector $v_{\Omega}-4v_{\infty}$.
%
%
We equip $\ZZ^{\Omega}$ with the inner product $\intf{\phantom{i}, \phantom{i}}$ 
given by
\[
\intf{x, y}:=\frac{1}{\,8\,} \sum_{i\in\Omega} x(i) y(i).
\]
Then $\Leech$ with $\intf{\phantom{i}, \phantom{i}}$ is the Leech lattice.
For a non-zero codeword $C\in \Golay$,
we define $\vep(C)\in \OG(\ZZ^{\Omega})$ by 
\[
v_i^{\vep(C)}:=
\begin{cases}
-v_i & \textrm{if $i\in C$}, \\
v_i& \textrm{if $i\notin C$}.
\end{cases}
\]
Then $\vep(C)$ 
preserves 
$\Leech\subset \ZZ^{\Omega}$, and hence gives 
an involution $\vep(C)\in \dotz$. 
Since the Mathieu group $M_{24}\subset \SSSS(\Omega)$ acts transitively
on $\Golay(n)$ for each $n$, 
and $M_{24}$ is naturally embedded into $\dotz$,
the conjugacy class in $\dotz$ of $\vep(C)$ for $C\in \Golay(n)$ 
depends only on $n$.
We choose a codeword $C_n\in \Golay(n)$ and put
\[
\vep_n:=\vep(C_n).
\]
Note that $\vep_{24}=-\id$.
Then Conway~\cite[Chapter 10, Section 3.7]{theCSbook} proved the following:
\begin{theorem}[Conway]
The set $\Invols(\dotz)$ of conjugacy classes of involutions in $\dotz$ 
consists of $4$ elements $[\vep_8]$, $[\vep_{12}]$, $[\vep_{16}]$, $[\vep_{24}]$.
\qed
\end{theorem}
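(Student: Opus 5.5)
The plan is to reduce to the monomial subgroup $N:=2^{12}{:}M_{24}\subset\dotz$, consisting of the sign changes $\vep(C)$ for $C\in\Golay$ composed with permutations in $M_{24}$. Since $|\dotz|=2^{22}\cdot 3^9\cdot 5^4\cdot 7^2\cdot 11\cdot 13\cdot 23$ and $|N|=2^{12}\cdot|M_{24}|=2^{22}\cdot 3^3\cdot 5\cdot 7\cdot 11\cdot 23$, the group $N$ contains a Sylow $2$-subgroup of $\dotz$. Hence every involution of $\dotz$ is conjugate into $N$, and it suffices to understand the involutions of $N$ and their fusion in $\dotz$.

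\textbf{Step 1: involutions of $N$.} An element $\vep(C)\pi$ of $N$ is an involution if and only if $\pi^2=1$ and $C^\pi=C$. There are two cases.
\begin{enumerate}
\item If $\pi=1$, we obtain $\vep(C)$ with $|C|\in\{8,12,16,24\}$. By the transitivity of $M_{24}$ on each $\Golay(n)$, these give at most the four classes $[\vep_8],[\vep_{12}],[\vep_{16}],[\vep_{24}]$.
\item If $\pi\neq1$, then $\pi$ has cycle type $1^82^8$ or $2^{12}$.
\end{enumerate}

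\textbf{Step 2: the four classes are distinct.} Distinctness follows at once from the trace on $\Leech\tensor\RR$: $\vep_n$ has trace $24-2n$, which equals $8,0,-8,-24$ for $n=8,12,16,24$.

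\textbf{Step 3: no new classes from $\pi\neq 1$.} We must show that every involution $\vep(C)\pi$ with $\pi\ne1$ is $\dotz$-conjugate to some $\vep_n$. First I would compute the trace of $\vep(C)\pi$ on $\ZZ^{\Omega}$. Only the fixed points of $\pi$ contribute, so the trace equals the number of fixed points of $\pi$ outside $C$ minus the number inside $C$. This trace lies in $\{8,0,-8\}$, and the rank $n$ of the $(-1)$-eigenspace is determined by it.

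Next I would identify $\Leech(g,-)$ as a negative-definite $2$-elementary-type lattice of rank $n$ with minimum norm $4$. Here I use that $2\Leech\subset\Leech(g,+)\oplus\Leech(g,-)$, so that $\Leech(g,\pm)$ have $2$-elementary discriminant groups that are anti-isometric as in Section~\ref{subsec:discfinvol}. The aim is to show that $\Leech(g,-)$ is determined up to isomorphism by its rank, and that its embedding is unique up to $\dotz$. I would then exhibit an explicit element of $\dotz\setminus N$ conjugating $\vep(C)\pi$ to a diagonal $\vep(C')$. For this I would use one of Conway's standard extra generators, namely the element acting on a sextet by a Hadamard-type matrix $\frac12(J-2I)$ on a tetrad.

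\textbf{Main obstacle.} Proving that there is no extra fusion-splitting, i.e.\ that all involutions of a given trace are conjugate in $\dotz$, is the step I expect to be hardest. The cleanest route I would try first is a counting check. Conway's centralizer orders are
\[
|\Cen(\dotz,\vep_8)|=2^{21}\cdot|\mathrm{Sp}_6(2)|\cdot\ldots,
\]
and similarly for the other classes. Summing $|\dotz|/|\Cen(\dotz,\vep_n)|$ over the four classes should match the number of involutions of $\dotz$ obtained from the character table of $\dotz=2\cdot\mathrm{Co}_1$. Equivalently, I would compare with the preimages of the classes $2A,2B,2C$ of $\mathrm{Co}_1$, checking which lifts have order $2$ rather than $4$.

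Failing a clean hand argument, I would verify the count directly in GAP. Concretely, take the permutation representation of $\dotz$ on the $196560$ minimal vectors, list the conjugacy classes of elements of order $2$, and confirm that there are exactly four, with traces $8,0,-8,-24$.
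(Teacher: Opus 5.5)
The paper does not prove this statement; it quotes it from Conway. Your proposal therefore has to stand on its own, and at present it does not.

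\textbf{What is correct.} The Sylow reduction to $N=2^{12}{:}M_{24}$, the description of the involutions of $N$, and distinctness via traces are all fine. One claim is unjustified: that the trace of $\vep(C)\pi$ lies in $\{8,0,-8\}$. A codeword can meet an octad in $2$ or $6$ points, so you must show this cannot happen for a $\pi$-invariant codeword and the fixed octad $O$ of $\pi$.

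\textbf{The gap is Step~3, which is the whole content of the theorem.} Your stated aim is that $\Leech(g,-)$ is determined by its rank and embeds uniquely up to $\dotz$. That is only a reformulation of the theorem. Indeed, $g$ is determined by $\Leech(g,-)$, and $h\in\dotz$ conjugates $g$ to $g'$ exactly when it carries $\Leech(g,-)$ to $\Leech(g',-)$. You give no argument for this aim, and the conjugating element outside $N$ is never written down.

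\textbf{The counting fallback does not fill the gap.} The number of involutions of $2\cdot\mathrm{Co}_1$ taken from its character table, or from the ATLAS lifts of $2A,2B,2C$, presupposes the class list you are trying to derive. So that route is a citation, not a proof. Moreover, the centralizer order you quote is impossible: $2^{21}|\mathrm{Sp}_6(2)|$ has $2$-part $2^{30}>2^{22}$. In fact $\Cen(\dotz,\vep_8)\cong\{\pm1\}\times\OG(\Lamsxtn)$ has order $2^{10}|\OG_8^+(2)|=2^{22}\cdot 3^5\cdot 5^2\cdot 7$ (Section~\ref{subsec:vep8}). A GAP class computation would verify the statement, but then your Steps~1--3 play no role.

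\textbf{How to close the gap along your own lines.} The idea you gesture at (a Hadamard-type element on tetrads) does work, once made precise.

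First, determine the $N$-classes. Conjugating $\vep(C)\pi$ by $\vep(D)$ replaces $C$ by $C+D+D^{\pi}$.
\begin{enumerate}
\item Let $\pi$ have type $1^82^8$ with fixed octad $O$. Then $\dim\Golay^{\pi}=8$. The map $C\mapsto C\cap O$ identifies $\Golay^{\pi}/(1+\pi)\Golay$ with the $16$ sets $\emptyset$, $O$, and fourteen $4$-subsets $E$. For each such $E$, the sextet containing $E$ and $O\setminus E$ has all its tetrads $\pi$-invariant.
\item Let $\pi$ have type $2^{12}$. Then $\Golay^{\pi}=(1+\pi)\Golay$.
\end{enumerate}
Hence, up to $N$-conjugacy, there is a sextet $T_1,\dots,T_6$ with the following properties. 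The permutation $\pi$ acts on each tetrad trivially or as a fixed-point-free double transposition. In the first case $O=T_1\cup T_2$ and $C\in\{\emptyset,\,T_1\cup T_2,\,T_1\cup T_3\}$; in the second case $C=\emptyset$.

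Next, build a frame adapted to this sextet. Take the $24$ vectors $4(v_a\pm v_b\pm v_c\pm v_d)$ with the four Hadamard sign patterns on each tetrad $\{a,b,c,d\}$. They are mutually orthogonal of norm $8$ and congruent modulo $2\Leech$, so they form a frame. The element $\vep(C)\pi$ acts diagonally on this frame. It negates $8$, $16$, or $12$ axes in the three subcases of the first case, and $12$ axes in the second.

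Finally, use transitivity of $\dotz$ on frames together with Conway's frame-stabilizer theorem (as used in Section~\ref{subsec:vep12}). This conjugates $\vep(C)\pi$ to some $\vep(C')$, and hence to $\vep_8$, $\vep_{16}$, or $\vep_{12}$. The same analysis also yields your trace claim.
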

For  $C\in \Golay$,
we put $\overline{C}:=\Omega\setminus C\in \Golay$, 
and let 
$\ZZ^{C}\subset \ZZ^{\Omega}$ denote 
the submodule generated by the vectors $v_i$ ($i\in C$).
Then we have
\begin{equation}\label{eq:Leechveppm}
\Leech(\vep(C), -)=\Leech\cap \ZZ^{C},
\quad
\Leech(\vep(C), +)=\Leech\cap \ZZ^{\overline{C}}.
\end{equation}
For each $\vep_n$,
we compute Gram matrices of $\Leech(\vep_n, \pm)$ 
with respect to certain bases
and the matrix representation of the homomorphism 
\[
\pi_n:=\pi_{\vep_n}\colon \Leech \to \Leech(\vep_n, -); \quad \lambda\mapsto \lambda-\lambda^{\vep_n}
\]
with respect to these bases.
By these computations,
 we obtain the following.
 See Table~\ref{table:minuspart} for a summary.
 Recall that $\Laminated_{n}$ is 
 the laminated lattice of rank $n$
(see~\cite[Figure 6.2]{theCSbook}).
 Note that $\Laminated_8$
is isomorphic to $E_8(2)$,
and $\Lamsxtn$ is 
isomorphic to the Barnes--Wall lattice $\BW_{16}$.
 Recall also that
$\Sigmatwlv$ is an \emph{odd} unimodular positive-definite lattice of rank $12$
that contains no vectors of norm $1$.
Note that $\Sigmatwlv$ is unique up to isomorphism
(see~\cite[Table 16.7]{theCSbook}).
\begin{table}
\[
\begin{array}{lllll}
n & \Leech(\vep_n, +)& \Leech(\vep_n, -) & \disc(\Leech(\vep_n, -)) & \Coker (\pi_n) \\
\hline
8 & \Lamsxtn & \Laminated_{8} & {(\ZZ/2\ZZ)}^{8\;\;} & 0 \mystruth{12pt}\\
12 & \Sigmatwlv(2) & \Sigmatwlv(2) & {(\ZZ/2\ZZ)}^{12} & 0 \\
16 & \Laminated_{8} & \Lamsxtn & {(\ZZ/2\ZZ)}^{8\;\;} &{(\ZZ/2\ZZ)}^{8\;\;} \\
24& 0 & \Leech &0 &{(\ZZ/2\ZZ)}^{24} \\
\end{array}
\]
{\small 
$\disc (L)$ means the discriminant group $L\dual/L$.\phantom{aaaa}}
\smallskip
\caption{$\Leech(\vep_n, +)$ and $\Leech(\vep_n, -)$}\label{table:minuspart}
\end{table}
\begin{itemize}
\item $n=8$. 
We choose as $C_{8}$ the codeword 
$\{ 0,\infty, 19, 3, 15, 6, 5, 9\} $, 
which corresponds to 
the positions with $*$ in Figure~\ref{figure:codewords} of MOG.
Then we obtain 
\[
\Leech(\vep_8, -)\cong \Laminated_{8},
\quad 
\Leech(\vep_8, +)\cong \Lamsxtn.
\]
We confirm  that 
 $\pi_{8}$ is surjective.
%
\item $n=12$. 
We choose as $C_{12}$ the codeword 
$\{ 0,\infty, 1,19, 3, 20,10, 18, 14, 5, 9, 13\}$.
See also Figure~\ref{figure:codewords}.
Then we obtain 
\[
\Leech(\vep_{12}, -)\cong \Sigmatwlv(2),
\quad 
\Leech(\vep_{12}, +)\cong \Sigmatwlv(2).
\]
The homomorphism $\pi_{12}$ is surjective.
%
\item $n=16$. 
We choose as $C_{16}$ the complement $\Omega\setminus C_8$ of $C_8$. 
Then we have $\vep_{16}=-\vep_{8}$, and hence 
\[
\Leech(\vep_{16}, -)=\Leech(\vep_{8}, +)\cong \Lamsxtn,
\quad 
\Leech(\vep_{16}, +)=\Leech(\vep_8, -)\cong \Laminated_{8}.
\]
The cokernel of $\pi_{16}$ is isomorphic to $(\ZZ/2\ZZ)^8$.
\item $n=24$. We have $\vep_{24}=-\id$ and hence 
$\Leech(\vep_{24}, -)=\Leech$.
The cokernel of $\pi_{24}$ is $\Leech/2\Leech \cong (\ZZ/2\ZZ)^{24}$.
\end{itemize}
\begin{figure}
\begin{tikzpicture}[scale=.4, every node/.style={font=\small}]
\draw (0,0) rectangle (6,4);
\draw (2,0) -- (2,4);
\draw (4,0) -- (4,4);
\draw (0,2) -- (6,2);
\node at (0.5,3.5) {$\ast$};
\node at (0.5,2.5) {$\ast$};
\node at (0.5,1.5) {$\ast$};
\node at (0.5,0.5) {$\ast$};
\node at (1.5,3.5) {$\ast$};
\node at (1.5,2.5) {$\ast$};
\node at (1.5,1.5) {$\ast$};
\node at (1.5,0.5) {$\ast$};
\node at (3, -1.) {$C_8$};
\end{tikzpicture}
\qquad
\begin{tikzpicture}[scale=.4, every node/.style={font=\small}]
\draw (0,0) rectangle (6,4);
\draw (2,0) -- (2,4);
\draw (4,0) -- (4,4);
\draw (0,2) -- (6,2);
\node at (0.5,3.5) {$\ast$};
\node at (0.5,2.5) {$\ast$};
\node at (0.5,0.5) {$\ast$};
\node at (1.5,3.5) {$\ast$};
\node at (1.5,2.5) {$\ast$};
\node at (1.5,0.5) {$\ast$};
\node at (2.5,3.5) {$\ast$};
\node at (2.5,2.5) {$\ast$};
\node at (2.5,1.5) {$\ast$};
\node at (3.5,0.5) {$\ast$};
\node at (4.5,2.5) {$\ast$};
\node at (5.5,2.5) {$\ast$};
\node at (3, -1.) {$C_{12}$};
\end{tikzpicture}
\qquad
\begin{tikzpicture}[scale=.4, every node/.style={font=\small}]
\draw (0,0) rectangle (6,4);
\draw (2,0) -- (2,4);
\draw (4,0) -- (4,4);
\draw (0,2) -- (6,2);
\node at (2.5,3.5) {$\ast$};
\node at (2.5,2.5) {$\ast$};
\node at (2.5,1.5) {$\ast$};
\node at (2.5,0.5) {$\ast$};
\node at (3.5,3.5) {$\ast$};
\node at (3.5,2.5) {$\ast$};
\node at (3.5,1.5) {$\ast$};
\node at (3.5,0.5) {$\ast$};
\node at (4.5,3.5) {$\ast$};
\node at (4.5,2.5) {$\ast$};
\node at (4.5,1.5) {$\ast$};
\node at (4.5,0.5) {$\ast$};
\node at (5.5,3.5) {$\ast$};
\node at (5.5,2.5) {$\ast$};
\node at (5.5,1.5) {$\ast$};
\node at (5.5,0.5) {$\ast$};
\node at (3, -1.) {$C_{16}$};
\end{tikzpicture}
\vskip -.3cm
\caption{Codewords}\label{figure:codewords}
\end{figure}
Our next task is to calculate $\Cen(\dotz, \vep_n)$
and its action on 
the cokernel of $\pi_n$.
Although these centralizers were already computed in \cite[Chapter~10]{theCSbook}, 
we rederive them using discriminant forms
 (see Section~\ref{subsec:discfinvol}).
%
%
\subsection{Involution $\vep_8$}\label{subsec:vep8}
We put
\[
G_8:=\OG^+_8(2), 
\]
which is a simple group of order 
$174182400$.
See~\cite[page 85]{MR827219}.
%
%
We put
\[
q_8:=q_{\Laminated_8}=q_{ E_8(2)},
\quad
q_{16}:=q_{\Lamsxtn}.
\]
Then $q_8$ 
is isomorphic to $u^{\oplus 4}$,
where $u$ is given in~\eqref{eq:uvab}.
Since $\Leech$ is unimodular, 
we have $q_{16}\cong -q_8$.
Since $u= -u$, 
we also have 
$q_{16}\cong u^{\oplus 4}$.
By~\cite[page~85]{MR827219}, 
we have
\[
\OG(q_8)\cong G_8.2, 
\quad 
\OG(\Laminated_8)=W(E_8)\cong 2.G_8.2.
\]
\begin{remark}\label{rem:2G2}
The homomorphism $\OG(q_8)\to \{\pm 1\}$ 
with the kernel $G_8$
is obtained by the permutation representation on
 the set of $4$-dimensional totally isotropic subspaces of 
$q_8$.
See~\cite[Section 3.8]{Wilson2009}.
The structure $W(E_8)\cong 2.G_8.2$ is given by
the inclusion $\{\pm \id\}\subset W(E_8)$ and $\det\colon W(E_8)\to \{\pm 1\}$.
\end{remark}
The group $\OG(\Laminated_8)=W(E_8)$
is generated by the $8$ reflections
corresponding to the standard fundamental roots of $E_8$.
Computing their actions on $q_8$,
we confirm that 
 the natural homomorphism $\OG(\Laminated_8)\to \OG(q_8)$ is surjective
with the kernel $\{\pm \id\}$.
We compute
the size $|\OG(\Lamsxtn)|$ and a finite set of generators of $ \OG(\Lamsxtn)$
by the method 
explained in Section~\ref{subsec:computational}.
%
%
%
%
 The size of $\OG(\Lamsxtn)$ is $512\cdot |G_8|$.
 Since $\Lamsxtn\cong \BW_{16}$, 
this result is consistent with $\OG(\BW_{16})\cong 2^{1+8}.G_8$
given in~\cite[Chapter 4, Section 10]{theCSbook}.
Using the method in Section~\ref{subsec:computational} again
and noting that $G_8$ is simple,
we see that 
\begin{equation}\label{eq:Imageeta16}
\parbox{11cm}{
\textrm{%
the image of 
the natural homomorphism $\eta_{16}\colon \OG(\Lamsxtn)\to \OG(q_{16})$  is \\
 of size $|G_8|$, and hence 
is equal to $G_8$ in $\OG(q_{16})\cong G_8.2$.
}
}
\end{equation}
%
%
We also  calculate
\[
K_{16}:=\Ker \,\eta_{16}
\]
by the method described  in Section~\ref{subsec:computational}.
It turns out that 
$K_{16}$ is 
an extraspecial $2$-group of plus type, namely $(2^{1+8})_{+}$.
%
%
%
%
%
%
%
\par 
Combining all these computations and using
the diagram~\eqref{eq:groupfibprodCenvep}, 
we obtain  
\[
\Cen(\dotz, \vep_8)\cong \{\pm \id\}\times \OG(\Lamsxtn).
\]
Since $\pi_{8}\colon \Leech \to \Leech(\vep_8, -)$ is surjective, 
Proposition~\ref{prop:bijf} implies that 
the subset $\Pi\inv ([\vep_8])$ of $ \Invols(\dotinf)$ consists of a single element,
 which is the conjugacy class 
 represented by an involution 
 \[
 \gamma_8:=(\vep_8, 0).
 \]
%
 %
%
%
%
%
\subsection{Involution $\vep_{12}$}\label{subsec:vep12}
We slightly simplify the argument of Conway~\cite[Chapter~10, Section~3.7]{theCSbook} 
by explicitly constructing the lattice $\Sigmatwlv$.
Let $\Sigmatwlv$ denote the submodule of $\ZZ^{12}$ generated by vectors 
$\pm 2 e_i\pm 2 e_j\; (i\ne j)$, 
and  $e_1+\cdots+e_{12}$,
where $e_1,\dots, e_{12}$ are the standard basis of $\ZZ^{12}$.
Then, by the intersection form 
\[
\intf{x, y}=\frac{1}{\,4\,} \sum_{i=1}^{12} x_i y_i
\]
on $\ZZ^{12}$, the submodule $\Sigmatwlv$ becomes the odd unimodular
positive-definite lattice with no vectors $v$ of norm $\intf{v, v}=1$.
%
\begin{lemma}\label{lem:frame}
The action of $\OG(\Sigmatwlv)$ on $\Sigmatwlv$ preserves
the \emph{frame}
\[
F:=\set{\pm 4 e_i}{i=1, \dots, 12}\subset \Sigmatwlv.
\]
\end{lemma}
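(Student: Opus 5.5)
Since every element of $\OG(\Sigmatwlv)$ preserves norms, it permutes the vectors of any fixed norm. The plan is therefore to show that $F$ is exactly the set of vectors of some norm, or at least a set cut out from such a set by an $\OG$-invariant condition. The vectors $\pm 4e_i$ have norm $16/4=4$. So we first describe $\Sigmatwlv$ concretely and then list all of its vectors of norm $4$.

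First, $\Sigmatwlv$ consists of the vectors $x\in\ZZ^{12}$ with one of two shapes. Either every coordinate is even and $\sum x_i\equiv 0\bmod 4$, so that $x\in 2D_{12}$. Or every coordinate is odd and $x\equiv e_1+\cdots+e_{12}+(\text{element of }2D_{12})$, which is a condition on $\sum x_i$ modulo $4$. The sum condition does not affect the norm-counting below, so we need not fix it precisely.

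The norm-$4$ vectors are those with $\sum x_i^2=16$. We treat the two shapes separately.
\begin{itemize}
\item In the all-odd shape, $\sum x_i^2\ge 12$, and each coordinate $\pm 3$ adds $8$ to this sum. Reaching $16$ would need a total increase of $4$, which is impossible. So there are no odd-shape vectors of norm $4$.
\item In the even shape, write $x=2y$. Then $\sum y_i^2=4$ and $\sum y_i$ is even. The possibilities are $y=\pm 2e_i$, giving $x=\pm 4e_i$, and $y$ with four entries $\pm1$, giving $x=\pm2e_i\pm2e_j\pm2e_k\pm2e_l$. The sum of the four signs is always even, so all such $x$ lie in the lattice.
\end{itemize}
Hence the norm-$4$ vectors form $F$ (24 vectors) together with $R:=\set{2(\pm e_i\pm e_j\pm e_k\pm e_l)}{}$.

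Next we separate $F$ from $R$ by an $\OG$-invariant property. We use the following characterization: $v\in F$ if and only if every norm-$4$ vector $w\neq\pm v$ has $\intf{v,w}\in\{0,\pm1\}$. For $v=4e_1$ we have $\intf{v,w}=w_1$, which lies in $\{0,\pm 2\}/2$, hence in $\{0,\pm1\}$ unless $w=\pm v$. For $v=2(e_1+e_2+e_3+e_4)$, the vector $w=2(e_1+e_2+e_3-e_4)$ gives $\intf{v,w}=4\cdot 2/4=2$. So $v\notin F$.

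Alternatively, count for each norm-$4$ vector $v$ the number of norm-$4$ vectors $w$ with $\intf{v,w}=2$; by the same computation this is zero exactly when $v\in F$. Either criterion is invariant under $\OG(\Sigmatwlv)$, which proves the lemma.

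The main obstacle is the precise description of the odd coset, in particular the sum condition modulo $4$. We sidestep it: the norm bound rules out odd vectors of norm $4$ regardless of that condition.
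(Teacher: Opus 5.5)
Your enumeration of the norm-$4$ vectors of $\Sigmatwlv$ as $F\cup R$ is correct: the all-odd coset contributes nothing, since $\sum(x_i^2-1)\in 8\ZZ$. Your strategy of singling out $F$ among these vectors by an $\OG(\Sigmatwlv)$-invariant property is also sound. \textbf{However, the invariant you chose is false on $F$, so the final step does not prove the lemma.} For $v=4e_1$ one has $\intf{v,w}=\frac14\cdot 4w_1=w_1$, not $w_1/2$. For the $2\cdot 8\binom{11}{3}=2640$ vectors $w\in R$ with $w_1=\pm 2$ this equals $\pm2$; for example $\intf{4e_1,\,2(e_1+e_2+e_3+e_4)}=8/4=2$. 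So every $v\in F$ fails your criterion ``$\intf{v,w}\in\{0,\pm1\}$ for all norm-$4$ $w\neq\pm v$''. Likewise, for $v\in F$ the number of norm-$4$ vectors $w$ with $\intf{v,w}=2$ is $8\binom{11}{3}=1320$, not $0$. Both of your criteria therefore cut out the empty set rather than $F$.

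The framework survives if you use parity instead. For $v=\pm 4e_i$ and any norm-$4$ vector $w$, we have $\intf{v,w}=\pm w_i\in\{0,\pm2,\pm4\}$, which is even. For $v=2(s_1e_i+s_2e_j+s_3e_k+s_4e_l)\in R$, choose distinct $a,b,c\notin\{i,j,k,l\}$ (possible since $12-4\ge 3$). Then $w:=2(s_1e_i+e_a+e_b+e_c)$ lies in $R$ and $\intf{v,w}=\frac14\cdot 4=1$. Hence $F$ is exactly the set of norm-$4$ vectors $v$ with $\intf{v,w}\in 2\ZZ$ for every norm-$4$ vector $w$. This description is manifestly $\OG(\Sigmatwlv)$-invariant.

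With this correction your argument is a genuinely different, and somewhat shorter, route than the paper's. The paper works with the $264$ norm-$2$ vectors. It distinguishes ``triangles'' among regular triples by counting common neighbours ($0$ versus $16$) and recovers $F$ as $\{u+v-w\}$ over all triangles. Your route needs only the enumeration of norm-$4$ vectors and a parity check.
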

\begin{proof}
Note that the set $(\Sigmatwlv)_2$ of vectors $u\in \Sigmatwlv$ with $\intf{u, u}=2$
consists of the $264$ vectors of the form $\pm 2 e_i\pm 2 e_j\; (i\ne j)$.
We say that $u, v, w\in( \Sigmatwlv)_2$ form a \emph{regular triple} if 
$\intf{u, v}=\intf{v, w}=\intf{w, u}=1$.
If $\{u, v, w\}$ is a regular triple, then the number of 
vectors $x\in( \Sigmatwlv)_2$ such that 
$\intf{u, x}=\intf{v, x}=\intf{w, x}=1$
is either $0$ or $16$.
We say that the regular triple $\{u, v, w\}$ is a \emph{triangle} if there exists no such $x\in (\Sigmatwlv)_2$.
See Figure~\ref{fig:triangle},
where $a, b, c , d\in \{2, -2\}$.
\begin{figure}
{\small
\[
\begin{array}{ccccccc}
u: & a& b & 0 & 0 & 0 &\dots\\
v: &a&0 & c & 0 & 0 &\dots\\
w: & 0& b & c & 0 & 0 & \dots\\
& \rlap{\textrm{\;\;\;\;\;triangle}} && &
\end{array}
\qquad\quad
\begin{array}{ccccccc}
u: &a& b & 0 & 0 & 0 &\dots\\
v: &a&0 & c & 0 & 0 & \dots\\
w: &a& 0 & 0 & d & 0 & \dots\\
\rlap{ \textrm{\;\;\;\;\;non-triangle}} && & &
\end{array}
\]
}
\vskip -1mm
\caption{Regular triples}\label{fig:triangle}
\end{figure}
There exist exactly $1760$ triangles, and 
the action of $\OG(\Sigmatwlv)$ preserves the set of triangles.
Since 
 the frame $F$ is equal to
\[
\set{u+v-w}{\textrm{$\{u, v, w\}$ is a triangle}},
\]
we conclude that $\OG(\Sigmatwlv)$ preserves $F$.
\end{proof}
\begin{corollary}\label{cor:OGSigma}
We have $\OG(\Sigmatwlv)=A\semidirectproduct\SSSS_{12}$,
where
\[
A:=\set{\alpha=(\alpha_1, \dots, \alpha_{12})\in \{\pm 1\}^{12}}{\prod\alpha_i=1}\;\; \cong\;\; (\ZZ/2\ZZ)^{11}
\]
acts on $\Sigmatwlv$ by $e_i\sp{\alpha}=\alpha_i e_i$ for $i=1, \dots, 12$, 
and
the symmetric group $\SSSS_{12}$ 
acts on $\Sigmatwlv$ by permuting the coordinates.
\qed
\end{corollary}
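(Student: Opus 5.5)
By Lemma~\ref{lem:frame}, every $g\in\OG(\Sigmatwlv)$ permutes the frame $F=\set{\pm 4e_i}{i=1,\dots,12}$. Since $g$ is linear, it preserves the set of $12$ antipodal pairs $\{\pm 4e_i\}$. Hence there are a permutation $\sigma\in\SSSS_{12}$ and signs $\alpha_i\in\{\pm1\}$ with $e_i^g=\alpha_i e_{\sigma(i)}$; here $e_i$ is meaningful in $\Sigmatwlv\tensor\QQ=\QQ^{12}$, and $4e_i$ already lies in the lattice. So $\OG(\Sigmatwlv)$ embeds into the signed permutation group $\{\pm1\}^{12}\semidirectproduct\SSSS_{12}$ acting on $\QQ^{12}$. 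The remaining task is to decide which signed permutations preserve $\Sigmatwlv$.

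Every signed permutation preserves the form $\frac14\sum x_iy_i$. The generators $\pm2e_i\pm2e_j$ are mapped among themselves by any signed permutation, and every permutation of coordinates fixes $e_1+\cdots+e_{12}$. So $\SSSS_{12}\subset\OG(\Sigmatwlv)$, and a sign change $\alpha$ preserves $\Sigmatwlv$ if and only if $s:=\sum\alpha_ie_i\in\Sigmatwlv$. To decide this, I will describe $\Sigmatwlv$ concretely. Let $D$ be the lattice generated by the $\pm2e_i\pm2e_j$, so $D=\set{x\in(2\ZZ)^{12}}{\sum x_i\equiv0\bmod 4}$. Then $\Sigmatwlv=D\cup(D+\mathbf{1})$ with $\mathbf{1}:=e_1+\cdots+e_{12}$, because $2\mathbf 1=\sum 2e_i\in D$ (its coordinate sum is $24\equiv0\bmod4$). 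Now $s-\mathbf 1=-2\sum_{\alpha_i=-1}e_i$ has coordinate sum $-2k$, where $k$ is the number of indices with $\alpha_i=-1$. Thus $s\in\Sigmatwlv$ if and only if $k$ is even, i.e.\ $\prod\alpha_i=1$. The vector $s$ has odd coordinates, so it cannot lie in $D$, and this is the only case to check.

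Combining the two steps, $\OG(\Sigmatwlv)$ consists exactly of the signed permutations whose sign vector lies in $A$. Writing each element as $\alpha\sigma$, the subgroup $A$ is normal, since it is the kernel of the map to $\SSSS_{12}$ obtained from the action on the frame pairs. It meets $\SSSS_{12}$ trivially, which gives $\OG(\Sigmatwlv)=A\semidirectproduct\SSSS_{12}$. The only real obstacle is the membership computation for $\sum\alpha_ie_i$, and that reduces to the parity check above. A sanity check: the order $2^{11}\cdot12!$ agrees with the known order of $\OG(\Sigmatwlv)$.
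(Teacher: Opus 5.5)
Your proof is correct and takes the same route as the paper. There the corollary is stated with \qed as an immediate consequence of Lemma~\ref{lem:frame}, which forces every isometry to be a signed permutation. You supply the check the paper omits: writing $\Sigmatwlv = D\cup(D+\mathbf{1})$ shows that every coordinate permutation preserves the lattice, and that a sign change does so exactly when $\prod\alpha_i=1$.
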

We fix a codeword $C_{12}\in \Golay(12)$ and 
consider the involution 
$\vep_{12}:=\vep(C_{12})$.
By~\eqref{eq:Leechveppm}, 
the lattice $\Leech (\vep_{12}, -)$ (resp.~$\Leech (\vep_{12}, +)$)
is isomorphic 
 to $\Sigmatwlv (2)$ under 
the isomorphism $\ZZ^{C_{12}}\cong \ZZ^{12}$ (resp.~$\ZZ^{\compword{C}_{12}}\cong \ZZ^{12}$)
induced by some (and hence any) bijection 
$C_{12}\cong \{1, \dots, 12\}$ (resp.~$\compword{C}_{12}\cong \{1, \dots, 12\}$),
and the frame $F\subset \Sigmatwlv$ corresponds to
\[
\set{\pm 8 v_i}{i\in C_{12}}\subset \Leech(\vep_{12}, -),
\qquad 
(\textrm{resp. }
\set{\pm 8 v_j}{j\in \compword{C}_{12}}\subset \Leech(\vep_{12}, +)),
\]
where $v_i$ ($i\in \Omega$) are the standard basis of $\ZZ^{\Omega}$.
Lemma~\ref{lem:frame}
implies that the action of $\Cen(\dotz, \vep_{12})$ 
on $\Leech$ preserves $\set{\pm 8 v_i}{i\in \Omega}$,
and hence, by Theorem~26 of~\cite[Chapter~10, Section~3.3]{theCSbook}, 
the group $\Cen(\dotz, \vep_{12}) $ is contained 
in the subgroup
\[
\angs{\vep(C) \mid C \in \Golay}\semidirectproduct M_{24} 
\]
of $\dotz$,
where $M_{24}\subset \SSSS(\Omega)$ is the automorphism group of $ \Golay$.
Note that, for any $C\in \Golay$,
the sizes of $C_{12}\cap C$ and $\compword{C}_{12}\cap C$
are even, and hence 
$\vep(C_{12}\cap C)$ and $\vep( \compword{C}_{12}\cap C)$ belong to $ A$ via 
$ \OG(\Leech(\vep_{12}, -))\cong \OG(\Sigmatwlv)$ and $ \OG(\Leech(\vep_{12}, +))\cong \OG(\Sigmatwlv)$,
respectively,
where $A$ is introduced in Corollary~\ref{cor:OGSigma}.
Thus we obtain 
\begin{equation}\label{eq:M12}
\Cen(\dotz, \vep_{12})=\angs{\vep(C) \mid C \in \Golay}\semidirectproduct \set{g\in M_{24}}{C_{12}^g=C_{12}}=(\ZZ/2\ZZ)^{12}\semidirectproduct  M_{12}.
\end{equation}
Since $\pi_{12}\colon \Leech\to \Leech(\vep_{12}, -)$ is surjective,
Proposition~\ref{prop:bijf} implies that the fiber $\Pi\inv ([\vep_{12}])\subset \Invols(\dotinf)$ consists of a single element,
 which is 
 represented by an involution 
 \[
 \gamma_{12}:=(\vep_{12}, 0).
 \]
 \begin{remark}\label{rem:q12}
 The discriminant form of $\Sigmatwlv(2)$ is isomorphic to
 $q_{12}:=a^{\oplus 12}$, 
and the size of $\OG(q_{12})$ is $51231497335603200$.
 By the method  explained  in Section~\ref{subsec:computational},
 we confirm that the kernel of the natural homomorphism from 
 $\OG(\Sigmatwlv(2))=\OG(\Sigmatwlv)$ to $\OG(q_{12})$ 
 is $\{\pm \id\}$.
 \end{remark}
\subsection{Involution $\vep_{16}$}\label{subsec:vep16}
Recall that we have $\Cen(\dotz, \vep_{16})=\Cen(\dotz, \vep_{8})$
by the choice of $C_{16}=\Omega\setminus C_8$.
We study the action of 
$\Cen(\dotz, \vep_{16})$
on the cokernel of the homomorphism 
$\pi_{16}\colon \Leech \to \Leech(\vep_{16}, -)=\Lamsxtn$.
Since $\Lamsxtn\dual/\Lamsxtn$ 
is $2$-elementary,
we have
\begin{equation} \label{eq:filteration}
\Lamsxtn\dual \;\supset\; \Lamsxtn\; \supset \; 2\Lamsxtn\dual.
\end{equation}
By direct computation,  we see  that 
the image  of $\pi_{16}$ is equal to $2\Lamsxtn\dual$.
Since the discriminant form $q_{16}$ of $\Lamsxtn$ 
is isomorphic to $u^{\oplus 4}$ and takes values in $\ZZ/2\ZZ$,
we have $\intf{x, x}\in \ZZ$ for any $x\in \Lamsxtn\dual$.
Therefore the map
\[
v+\Image (\pi_{16}) \;\mapsto\; \frac{\,1\,}{2} \intf{v, v} \;\bmod 2\ZZ
\]
gives rise to a quadratic form 
\[
\tilde{q}_{16}\;\colon\; \Coker (\pi_{16})\;\to\; \QQ/2\ZZ.
 \]
A direct computation shows that $\tilde{q}_{16}$ is isomorphic to $u^{\oplus 4}$.
The action of $\Cen(\dotz, \vep_{16})$ on $\Coker (\pi_{16})$ 
factors through the projection
$\Cen(\dotz, \vep_{16})\to \OG(\Lamsxtn)$,
which is surjective
by the diagram~\eqref{eq:groupfibprodCenvep} and the surjectivity of 
$W(E_8)\to \OG(q_8)$. 
Moreover, this action 
preserves $\tilde{q}_{16}$.
Since we have calculated a generating set of $\OG(\Lamsxtn)$,
we can explicitly compute the orbit decomposition of $\Coker (\pi_{16})$
by $\OG(\Lamsxtn)$; it consists of the following three orbits 
\[
\{0\},\quad 
\set{v \in\Coker (\pi_{16}) }{\tilde{q}_{16}(v)=1},
\quad
\set{v \in\Coker (\pi_{16}) }{v\ne 0, \;\; \tilde{q}_{16}(v)=0}.
\]
Their sizes are $1$, $120$, and $135$, respectively.
Hence, by Proposition~\ref{prop:bijf}, 
 it follows that the fiber $\Pi\inv ([\vep_{16}])\subset \Invols(\dotinf)$ consists of three conjugacy classes,
 which are 
 represented by involutions
 \[
 \gamma_{16, 1}:=(\vep_{16}, 0),
 \quad
 \gamma_{16, 120}:=(\vep_{16},\tauI),
 \quad
 \gamma_{16, 135}:=(\vep_{16},\tauII),
 \]
where $\tauI$ (resp.~$ \tauII$) is a non-zero vector of $\Leech(\vep_{16}, -)$ 
not in the image of $\pi_{16}$ 
such that
$\intf{\tauI,\tauI}\equiv 2 \bmod 4$
 (resp.~$\intf{\tauII,\tauII}\equiv 0 \bmod 4$).
 We choose from $\Leech(\vep_{16}, -)=\Lamsxtn$ a vector of norm $6$ as $ \tauI$,
 and a vector of norm $4$ as $ \tauII$.
%
(There exist exactly $4320$ vectors of norm $4$ 
and $61440$ vectors of norm $6$ 
in $\Lamsxtn$. 
None of them belongs to $2\Lamsxtn\dual=\Image(\pi_{16})$. )
\begin{remark}\label{rem:K16}
By direct computation, 
we find that 
the kernel $K_{16}$ of  the natural homomorphism 
$\eta_{16}\colon\OG(\Lamsxtn)\to \OG(q_{16})$
is in fact \emph{equal} to the kernel of 
the natural homomorphism 
$\tilde{\eta}_{16}\colon \OG(\Lamsxtn)\to \OG(\tilde{q}_{16})$.
Suppose that $g\in K_{16}$.
Then 
the action of $g$ on
the filtration~\eqref{eq:filteration} of $\Lamsxtn\dual$
induces the identities on 
 $\Lamsxtn\dual/\Lamsxtn$ and on $\Coker (\pi_{16})=\Lamsxtn/2\Lamsxtn\dual$, 
and hence gives rise to a homomorphism $\Lamsxtn\dual/\Lamsxtn\to \Coker (\pi_{16})$
by 
\[ x \bmod \Lamsxtn \;\;\mapsto \;\; x^g- x \bmod \Image(\pi_{16})
\]
for $x\in \Lamsxtn\dual$.
Thus we obtain an embedding
\[
K_{16}/\{\pm \id\}\inj \Hom(\Lamsxtn\dual/\Lamsxtn, \Coker (\pi_{16})), 
\]
which gives an isomorphism $K_{16}\cong (2^{1+8})_{+}$.
\end{remark}
%
%
\subsection{Involution $\vep_{24}$}\label{subsec:vep24}
We have $\vep_{24}=-\id$ and hence $\Leech(\vep_{24}, -)=\Leech$.
The orbit decomposition of 
$\Coker(\pi_{24})=\Leech/2\Leech$ by $\Cen(\dotz, \vep_{24})=\dotz$ 
was determined by  
Conway~\cite[Chapter 10, Section 3.3]{theCSbook}.
It consists of $4$ orbits $\{0\}, \barLeech(2), \barLeech(3), \barLeech(4)$, where 
\[
 \barLeech(n):=\set{\lambda\bmod 2\Leech}{\intf{\lambda, \lambda}=2n}.
\]
Their sizes are $1$, $98280$, $8386560$, and $8292375$, respectively.
Hence, by Proposition~\ref{prop:bijf}, we see that 
$\Pi\inv ([\vep_{24}])\subset \Invols(\dotinf)$ consists of four conjugacy classes,
 which are 
 represented by involutions
 \[
\gamma_{24, 0}:=(-\id, 0),
\quad
\gamma_{24, 4}:=(-\id, t_2),
\quad
\gamma_{24, 6}:=(-\id, t_3),
\quad
\gamma_{24, 8}:=(-\id, t_4),
 \]
where $t_m\in \Leech$ is a vector such that
$\intf{ t_m, t_m}=2m$.
\par
\medskip
Combining the results of Sections~\ref{subsec:vep8}, ~\ref{subsec:vep12}, ~\ref{subsec:vep16},~\ref{subsec:vep24},
we obtain a proof of Theorem~\ref{thm:main1}.
Using Proposition~\ref{prop:exactCen}, we also obtain descriptions of $\Cen(\dotinf, \gamma)$
for involutions $\gamma$ in $\dotinf$.
\section{Action of $\dotinf$ on $\Lts$}\label{sec:proofThm2}
In Section~\ref{subsec:AOGM}, 
we present a general theory on the action of affine isometries of an even positive-definite lattice $M$
on the hyperbolic lattice $L=U\oplus M(-1)$.
Applying this theory,
we  prove~Theorem~\ref{thm:main2} in Section~\ref{subsec:proofmain2}.
\subsection{Action of affine isometries on a hyperbolic lattice}\label{subsec:AOGM}
Let $M$ be an even positive-definite lattice.
We denote by 
\[
\AOG(M):=\OG(M)\ltimes M
\]
 the group of affine isometries of $M$
acting on $M$ as $x^{(g, \tau)}=x^g+\tau$
for $g\in \OG(M)$ and $\tau\in M$.
We consider the even hyperbolic lattice
\[
L:=U\oplus M(-1),
\]
where $U$ is the hyperbolic plane with the basis $\weyl_0, \weyl_1$
satisfying~\eqref{eq:w0w1}.
To avoid confusion, we write $\intfM{\phantom{a,a}}$
for the intersection form of $M$ (\emph{not} of $M(-1)$),
and $\intfL{\phantom{a,a}}$
for the intersection form of $L$.
An element of $L$ is written as 
\[
(a, b, x):=a\weyl_0+b \weyl_1+ x,
\]
where $a, b\in \ZZ$ and $x\in M$.
Then we have
\[
\intfL{\; (a, b, x), \; (a\sprime, b\sprime, x\sprime)\;}\;\;=\;\; ab\sprime+a\sprime b-\intfM{x, x\sprime}.
\]
Let $\PPP_L$ be the positive cone containing 
$\weyl_0=(1,0, 0)\in L$ in its closure.
It is easy to verify the following:
\begin{proposition}\label{prop:embeddingAOG}
The group $\AOG(M)$ acts on the lattice $L=U\oplus M(-1)$
by
\begin{align*}
\weyl_0^{(g, \tau)} \;\;=\;\; & \weyl_0, \\
\weyl_1^{(g, \tau)} \;\;=\;\; & \frac{\intfM{\tau, \tau}}{2}\;\weyl_0+\weyl_1+(0,0,\tau), \\
(0,0,x)^{(g, \tau)} \;\;=\;\; &\intfM{\tau, x^g} \;\weyl_0 + (0,0, x^g).
\end{align*}
Hence we have an injection $\AOG(M)\inj \OG(L, \PPP_L)$.
The image of this embedding is equal to
the stabilizer $\OG(L, \weyl_0)$ of $\weyl_0$ in $\OG(L, \PPP_L)$.
\qed
\end{proposition}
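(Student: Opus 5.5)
The plan has three stages, each a direct calculation. First, check that each $(g,\tau)$ preserves the form on $L$. Second, check that the resulting map is an injective anti-homomorphism compatible with the right action. Third, identify the image with the stabilizer of $\weyl_0$.

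\emph{Isometry.} The formulas define a $\ZZ$-linear map $L\to L$, because $\intfM{\tau,\tau}/2\in\ZZ$ ($M$ is even) and $\intfM{\tau,x^g}\in\ZZ$. I will verify $\intfL{\;,\;}$ on the generators $\weyl_0,\weyl_1,(0,0,x)$.
\begin{itemize}
\item $\intfL{\weyl_0,\weyl_0}=0$ is preserved, and $\intfL{\weyl_0^{\gamma},\weyl_1^{\gamma}}=1$.
\item $\intfL{\weyl_1^{\gamma},\weyl_1^{\gamma}}=2\cdot\frac{\intfM{\tau,\tau}}{2}-\intfM{\tau,\tau}=0$.
\item $\intfL{\weyl_0^{\gamma},(0,0,x)^{\gamma}}=0$.
\item $\intfL{\weyl_1^{\gamma},(0,0,x)^{\gamma}}=\intfM{\tau,x^g}-\intfM{\tau,x^g}=0$.
\item $\intfL{(0,0,x)^{\gamma},(0,0,y)^{\gamma}}=-\intfM{x^g,y^g}=-\intfM{x,y}$.
\end{itemize}
So each $\gamma=(g,\tau)$ gives an element of $\OG(L)$. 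It fixes $\weyl_0$, which lies in the closure of $\PPP_L$, so it preserves $\PPP_L$. Indeed, an isometry swapping the two cones would send $\weyl_0$ into the closure of $-\PPP_L$, but $\weyl_0\neq 0$ lies only in the closure of $\PPP_L$.

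\emph{Action.} There is a cleaner way to see the compatibility with composition. For $x\in M$, set
\[
\phi(x):=\tfrac{\intfM{x,x}}{2}\weyl_0+\weyl_1+(0,0,x),
\]
the standard lift of $M$ to the isotropic vectors $v$ with $\intfL{v,\weyl_0}=1$. The formulas say precisely that $\phi(x)^{\gamma}=\phi(x^g+\tau)$ and $\weyl_0^\gamma=\weyl_0$. To check this, expand $\phi(x)^\gamma$ by linearity; the $\weyl_0$-coefficient is $\frac{\intfM{x,x}}{2}+\frac{\intfM{\tau,\tau}}{2}+\intfM{\tau,x^g}=\frac{\intfM{x^g+\tau,x^g+\tau}}{2}$. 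The vectors $\weyl_0$ and $\phi(x)$ ($x\in M$) span $L\tensor\QQ$. Hence the map is a right action, since both sides agree on these spanning vectors and the affine action is a right action. It is injective, since $\gamma$ acting trivially forces $x^g+\tau=x$ for all $x$.

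\emph{Image.} This is the main step. Take $h\in\OG(L,\PPP_L)$ with $\weyl_0^h=\weyl_0$. Then $h$ preserves $\weyl_0\sperp=\ZZ\weyl_0\oplus M(-1)$ and induces an isometry $g$ of $\weyl_0\sperp/\ZZ\weyl_0\cong M(-1)$, hence $g\in\OG(M)$. Next, $\weyl_1^h$ is isotropic with $\intfL{\weyl_1^h,\weyl_0}=1$. Every such vector has the form $\phi(\tau)$ for a unique $\tau\in M$: writing it as $(a,1,\tau)$, isotropy forces $a=\intfM{\tau,\tau}/2$. Now put $\gamma=(g,\tau)$ and $k:=h\gamma^{-1}$. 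Then $k$ fixes $\weyl_0$ and $\weyl_1$, so it preserves $U^\perp=M(-1)$. It also acts trivially on $\weyl_0\sperp/\ZZ\weyl_0$. So on $M(-1)$ the map $k$ is the identity, and $k=\id$. The one point needing care is the direction of composition and the sign convention, so that the induced action on $\weyl_0\sperp/\ZZ\weyl_0$ of $\gamma$ is indeed $g$. That follows from the third formula, because the $\weyl_0$-term dies modulo $\ZZ\weyl_0$.
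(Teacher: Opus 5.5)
Your proposal is correct and amounts to the direct verification that the paper omits: the paper introduces the proposition with ``It is easy to verify'' and gives no proof, and reducing surjectivity onto $\OG(L,\weyl_0)$ to $k=h\gamma\inv$ fixing $\weyl_0$, $\weyl_1$ and acting trivially on $\weyl_0\sperp/\ZZ\weyl_0$ is a clean way to supply the omitted step. One terminological point: with the paper's convention that groups act from the right, the map $\AOG(M)\to\OG(L,\PPP_L)$ is a homomorphism rather than an anti-homomorphism, which is exactly what your identity $\phi(x)^{\gamma}=\phi(x^g+\tau)$ establishes.
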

\begin{remark}\label{rem:tau0}
When $\tau=0$, the action of $(g, 0)\in \AOG(M)$ on $L$ 
is equal to $\id_U\oplus g$.
\end{remark}
Suppose that $(g, \tau)\in \AOG(M)$ is an involution, 
that is, $g\ne \id_M$, $g^2=\id_M$, and $\tau\in M(g, -)$.
We consider the sublattice 
\[
M(g, -,\tau):=\set{x\in M(g, -)}{\intfM{x, \tau}\equiv 0 \bmod 2}
\]
of $M(g, -)$.
Then the following is easy to confirm:
\begin{proposition}\label{prop:involL}
For an involution $\gamma:=(g, \tau)\in \AOG(M)$, we have 
\[
L(\gamma, -)=\bigsetR{\left(\frac{\intfM{x, \tau}}{2}, 0, x\right)\;\;}{x \in M(g, -,\tau)},
\]
which is isomorphic to 
$M(g, -,\tau)(-1)$.
In particular, 
$L(\gamma, -)$ is negative-definite of rank $\le \rank M$, and 
hence its orthogonal complement 
$L(\gamma, +)$ is hyperbolic.
\qed
\end{proposition}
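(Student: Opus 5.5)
We argue by a direct computation with the explicit formulas of Proposition~\ref{prop:embeddingAOG}. Write a general element of $L$ as $v=(a,b,x)=a\weyl_0+b\weyl_1+(0,0,x)$ and apply $\gamma=(g,\tau)$ term by term:
\[
v^{\gamma}=\Bigl(a+b\,\tfrac{\intfM{\tau,\tau}}{2}+\intfM{\tau,x^g}\Bigr)\weyl_0+b\,\weyl_1+(0,0,\,b\tau+x^g).
\]
The condition $v^\gamma=-v$ is a system of three equations, one for each coordinate.

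The $\weyl_1$-coordinate gives $b=-b$, hence $b=0$. The $M$-coordinate then reduces to $x^g=-x$, so $x\in M(g,-)$. Finally, the $\weyl_0$-coordinate becomes $a+\intfM{\tau,x^g}=-a$. Since $x^g=-x$, this reads $2a=\intfM{\tau,x}$, and since $a\in\ZZ$ we need $\intfM{x,\tau}\equiv 0\bmod 2$, that is, $x\in M(g,-,\tau)$. Conversely, every such $x$ with $a=\intfM{x,\tau}/2$ and $b=0$ gives an anti-invariant vector. This yields the stated description of $L(\gamma,-)$.

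For the isometry type, compute $\intfL{(a,0,x),(a',0,x')}=-\intfM{x,x'}$, because the $U$-part of the form involves only $ab'+a'b$ and both $b$-coordinates vanish. So the map $x\mapsto(\intfM{x,\tau}/2,0,x)$ is an isometry $M(g,-,\tau)(-1)\isom L(\gamma,-)$, and it is injective since $x$ is recovered from the third coordinate. As $M$ is positive-definite, $L(\gamma,-)$ is negative-definite of rank $\le\rank M$.

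It remains to see that $L(\gamma,+)$ is hyperbolic. The two lattices $L(\gamma,\pm)$ are mutually orthogonal and their ranks add up to $\rank L$, because $L\tensor\QQ$ splits into the $\pm1$ eigenspaces of the involution $\gamma$. Also $\weyl_0\in L(\gamma,+)$, so $L(\gamma,+)$ is the orthogonal complement of $L(\gamma,-)$. Since $L$ has signature $(1,\rank L-1)$ and $L(\gamma,-)$ is negative-definite, $L(\gamma,+)\tensor\RR$ has signature $(1,\ast)$. Its rank is at least $2$ because it contains the sublattice $U$-part projection; more directly, $\rank L(\gamma,+)\ge \rank L-\rank M=2$. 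Hence $L(\gamma,+)$ is hyperbolic. No step is a real obstacle; the only care needed is the parity condition arising from $a\in\ZZ$.
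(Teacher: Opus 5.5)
Your direct coordinate computation is correct and is exactly the verification the paper leaves to the reader; the paper states this proposition as easy to confirm and gives no written proof. Two phrases in your last paragraph should be fixed, though neither affects correctness: the remark about the ``$U$-part projection'' has no meaning, and $\weyl_0\in L(\gamma,+)$ is not what makes $L(\gamma,+)$ the orthogonal complement of $L(\gamma,-)$ (that follows from the orthogonal eigenspace decomposition of $L\tensor\QQ$); your bound $\rank L(\gamma,+)\ge \rank L-\rank M=2$ together with the signature count already finishes the argument.
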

\begin{remark}
The inclusion $M(-1)\inj L$ induces an anti-isomorphism $-q_M\cong q_L$.
The action of $\AOG(M)\subset \OG(L, \PPP)$ on $q_L\cong -q_M$ 
is the composite of the natural homomorphisms $\AOG(M) \to \OG(M)\to \OG(q_M)$.
\end{remark}
\subsection{Proof of Theorem~\ref{thm:main2}}\label{subsec:proofmain2}
We apply the general results in the previous section to the case where $M=\Leech$ and $L=\Lts$,
and obtain an isomorphism 
\[
\dotinf\cong \OG(\Lts, \weyl_0).
\]
For the $9$ conjugacy classes 
$[\gamma]=[(g, \tau)]$ of involutions of $\dotinf$,
we explicitly calculate the sublattices $\Lts(\gamma, +)$ and $\Lts(\gamma, -)$, 
and prove Theorem~\ref{thm:main2}.
\begin{remark}
For involutions represented by $\gamma=(\vep, 0)$,
the computation is trivial by Remark~\ref{rem:tau0}.
For involutions $\gamma=(\vep_{16},\tauI)$ and $\gamma=(\vep_{16},\tauII)$,
we make use of the uniqueness of the unimodular lattices $\typeILat_{1, 9}$ and $\typeIILat_{1, 9}$,
respectively.
\end{remark}
%
%
%
\begin{remark}
The lattice $\Lts(\gamma,+)$ for $\gamma=\gamma_{24,2m}=(-\id, t_m)$
with $2m \in \{4,6,8\}$ has a basis $\{\weyl_0, 2\weyl_1+\tau\}$.
under which the Gram matrix is equal to~\eqref{eq:GramVm}.
The isomorphism classes of the lattices $\Lts(\gamma,-)$ for $\gamma=\gamma_{24,6}$ 
and $\gamma=\gamma_{24,8}$ 
are distinguished by their discriminant forms,
which are anti-isomorphic to $q_{V_6}\cong a\oplus b$ and $q_{V_8}\cong u$, respectively. 
 \end{remark}
%
%
%
\section{Borcherds method for the invariant sublattices of involutions}\label{sec:Borcherds}
The Borcherds method is a method for computing a generating set of 
a subgroup of  $\OG(S, \PPP_S)$, 
where $S$ is  a hyperbolic primitive sublattice of $\Lts=U\oplus \Leech(-1)$.
In Sections~\ref{subsec:ConwayChamber} and~\ref{subsec:FFF},
we fix notions and terminology for the Borcherds method.
In Section~\ref{subsec:sublatLts}, 
we fix an involution
$\gamma=(\vep, \tau)\in \dotinf$, and 
apply the Borcherds method to the primitive hyperbolic sublattice
$\Lplus:=\Lts(\gamma, +)$.
We define a subgroup $\vGam$ of $ \OG(\Lplus, \PPPplus)$
with finite index, and establish some facts
for the computation of a  generating set of
 $\vGam$, which will be carried out in the next section.
\subsection{Conway chamber}\label{subsec:ConwayChamber}
Let $\PPPts$ be the 
positive cone of $\Lts$ 
containing $\weyl_0\in U$ in its closure.
We call standard fundamental domains of the action of $W(\Lts)$ on $\PPPts$ 
\emph{Conway chambers}.
A \emph{Weyl vector} is a non-zero primitive vector $\weyl \in \Lts$
with $\intf{\weyl , \weyl }=0$ contained in 
the closure of $\PPPts$ such that
the negative-definite lattice $[\weyl ]\sperp/[\weyl ]$ is isomorphic to $\Leech(-1)$,
where $[\weyl ]\sperp:=\set{v\in \Lts}{\intf{\weyl , v}=0}$.
For example, the vector $\weyl_0\in\Lts$ is a Weyl vector.
\par
Let $\weyl $ be a Weyl vector.
A $(-2)$-vector $r$ of $\Lts$ is called a \emph{Leech root with respect to $\weyl $} if $\intf{r, \weyl }=1$.
The set of all Leech roots with respect to $\weyl $ is denoted by $\LLL(\weyl )$.
We then put
\[
\ConCham(\weyl ):=\set{x\in\PPPts}{\intf{x, r}\ge 0\;\;\textrm{for all}\;\; r\in \LLL(\weyl )}.
\]
Conway~\cite{MR690711} proved the following:
\begin{theorem}[Conway]
The mapping $\weyl \mapsto \ConCham(\weyl)$ is a bijection from the set of Weyl vectors of $\Lts$ to
the set of Conway chambers.
\qed
\end{theorem}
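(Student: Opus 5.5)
The plan is to treat the distinguished Weyl vector $\weyl_0$ first, and then transport the result to all Weyl vectors using the group $\OG(\Lts,\PPPts)$.

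\textbf{Step 1: the Leech roots of $\weyl_0$.} Write vectors of $\Lts$ as $(a,b,x)$ with $x\in\Leech$, as in Section~\ref{subsec:AOGM}. A $(-2)$-vector $r$ with $\intf{r,\weyl_0}=1$ has $b=1$ and $a=(\intf{x,x}_\Leech-2)/2$. Hence
\[
\LLL(\weyl_0)=\set{\LeechRoot{\lambda}}{\lambda\in\Leech},
\]
where $\LeechRoot{\lambda}$ denotes the Leech root with $x=\lambda$. A direct computation gives
\[
\intf{\LeechRoot{\lambda},\LeechRoot{\mu}}=\tfrac12\intf{\lambda-\mu,\lambda-\mu}_\Leech-2 .
\]
Since $\Leech$ has minimal norm $4$, this is $\ge 0$ for $\lambda\ne\mu$. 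So distinct Leech roots meet at non-obtuse angles.

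\textbf{Step 2: $\ConChamz$ is a Conway chamber.} I would run Vinberg's algorithm with the isotropic controlling vector $\weyl_0$, ordering roots by their height $n=\intf{r,\weyl_0}$.
\begin{itemize}
\item \emph{Height $0$.} The roots of height $0$ are the roots of $[\weyl_0]\sperp/[\weyl_0]\cong\Leech(-1)$. There are none, so no constraints arise at this stage.
\item \emph{Height $1$.} By Step 1, all of $\LLL(\weyl_0)$ is accepted.
\item \emph{Height $n\ge2$.} Let $r=(a,n,x)$ with $\intf{r,r}=-2$. Using covering radius $\sqrt2$ of $\Leech$, choose $\lambda\in\Leech$ with $\intf{x/n-\lambda,x/n-\lambda}_\Leech\le 2$. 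Expanding $\intf{r,\LeechRoot{\lambda}}$ with $a=(\intf{x,x}_\Leech-2)/(2n)$ gives
\[
\intf{r,\LeechRoot{\lambda}}=\tfrac n2\bigl(\intf{x/n-\lambda,x/n-\lambda}_\Leech-2\bigr)+\tfrac1n-1 .
\]
For $n\ge2$ this is strictly negative, so $r$ is rejected.
\end{itemize}
Vinberg's criterion then gives two conclusions: the walls of the chamber containing $\weyl_0$ in its closure are exactly $(r)\sperp$ for $r\in\LLL(\weyl_0)$, and no mirror $(r)\sperp$ meets the interior of $\ConChamz$. (Height-$0$ roots are absent, which is what lets Vinberg's algorithm start from an isotropic vector here.) I expect this to be the main obstacle: making the sign computation and the locally-finite/infinite-wall version of Vinberg's argument rigorous. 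The deep input is the covering radius of $\Leech$.

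\textbf{Step 3: $\ConCham(\weyl)$ is a chamber for every Weyl vector $\weyl$.}
\begin{itemize}
\item \emph{Transitivity on Weyl vectors.} If $\weyl$ is a Weyl vector, pick $v$ with $\intf{v,\weyl}=1$. Then $\angs{\weyl,v}\cong U$ splits off, with orthogonal complement isomorphic to $[\weyl]\sperp/[\weyl]\cong\Leech(-1)$. Hence some $h\in\OG(\Lts,\PPPts)$ maps $\weyl_0$ to $\weyl$.
\item \emph{Transport.} Since $\LLL(\weyl_0)^h=\LLL(\weyl)$, we get $\ConChamz^h=\ConCham(\weyl)$. Because $h$ preserves the set of all mirrors, $\ConCham(\weyl)$ is a Conway chamber.
\end{itemize}

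\textbf{Step 4: surjectivity and injectivity.}
\begin{itemize}
\item \emph{Surjectivity.} $W(\Lts)$ acts transitively on Conway chambers, so any chamber is $\ConChamz^{w}=\ConCham(\weyl_0^{w})$ for some $w\in W(\Lts)$.
\item \emph{Injectivity.} By Step 2, $\ConCham(\weyl)$ has wall set exactly $\LLL(\weyl)$. So the chamber determines $\LLL(\weyl)$, and $\weyl$ is the unique vector with $\intf{\weyl,r}=1$ for all $r\in\LLL(\weyl)$. This vector is unique because Leech roots span $\Lts\tensor\QQ$: the differences $\LeechRoot{\lambda}-\LeechRoot{0}$ already span the $\Leech$-part modulo $\weyl_0$.
\end{itemize}
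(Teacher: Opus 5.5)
Your proposal is correct. It is essentially Conway's own proof: Vinberg's algorithm run from the isotropic controlling vector $\weyl_0$, with no roots of height $0$ (since $\Leech$ has no norm-$2$ vectors), and the covering radius $\sqrt{2}$ of $\Leech$ as the key input. The paper gives no proof of its own here; it simply cites Conway. A few small repairs are needed.

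\textbf{The inner product in Step~2.} The correct value is
\[
\intf{r,\LeechRoot{\lambda}}=\tfrac n2\bigl(\intfLeech{x/n-\lambda,\,x/n-\lambda}-2\bigr)-\tfrac1n ,
\]
not $\ldots+\tfrac1n-1$. Setting $n=1$, $r=\LeechRoot{\mu}$ must reproduce your Step~1 formula $\tfrac12\intfLeech{\mu-\lambda,\mu-\lambda}-2$, and only this version does. With the covering radius it gives $\intf{r,\LeechRoot{\lambda}}\le -1/n<0$, so your conclusion is unaffected.

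\textbf{The spanning claim in Step~4.} You also need $\weyl_0$ and $\weyl_1$ in the span of the Leech roots. These come from $\LeechRoot{\lambda}+\LeechRoot{-\lambda}-2\LeechRoot{0}=\intfLeech{\lambda,\lambda}\,\weyl_0$ and $\LeechRoot{0}=\weyl_1-\weyl_0$.

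\textbf{The step you flag as the main obstacle.} The same inequality lets you avoid quoting Vinberg's algorithm in the isotropic, infinitely-many-walls setting.
\begin{enumerate}
\item Reflect a root $r$ of height $n\ge 2$ in the nearby $\LeechRoot{\lambda}$. The result has height $n+\intf{r,\LeechRoot{\lambda}}$, which is an integer in $[-1/n,\,n-1/n]$.
\item Since there are no roots of height $0$, this height lies in $[1,n-1]$. By induction every positive root is conjugate to a Leech root under the group generated by Leech-root reflections, so that group is all of $W(\Lts)$.
\item Leech roots are pairwise non-obtuse, with dihedral angles $\pi/2$ or $\pi/3$ where walls meet. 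So $\ConChamz$ is a Coxeter polyhedron and hence a fundamental domain of $W(\Lts)$. It therefore contains exactly one Conway chamber and equals it.
\end{enumerate}
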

\begin{corollary}
The embedding $\dotinf\inj \OG(\Lts, \PPPts)$ given in Section~\ref{subsec:AOGM} 
gives rise to an isomorphism 
$\dotinf\cong \OG(\Lts, \ConCham(\weyl_0))$.
\qed
\end{corollary}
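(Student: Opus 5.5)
The statement to prove is the Corollary: the embedding $\dotinf\inj\OG(\Lts,\PPPts)$ of Proposition~\ref{prop:embeddingAOG} induces an isomorphism $\dotinf\cong\OG(\Lts,\ConChamz)$. The plan is to reduce the claim to the equality
\[
\OG(\Lts,\weyl_0)=\OG(\Lts,\ConChamz)
\]
inside $\OG(\Lts,\PPPts)$. This suffices because Proposition~\ref{prop:embeddingAOG}, applied with $M=\Leech$, already identifies $\dotinf$ with the stabilizer $\OG(\Lts,\weyl_0)$ of $\weyl_0$ in $\OG(\Lts,\PPPts)$.

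First I show $\OG(\Lts,\weyl_0)\subset\OG(\Lts,\ConChamz)$. Let $g$ fix $\weyl_0$ and preserve $\PPPts$. Since $g$ is an isometry, it maps $(-2)$-vectors to $(-2)$-vectors. Since $\intf{r^g,\weyl_0}=\intf{r^g,\weyl_0^g}=\intf{r,\weyl_0}$, it maps $\LLL(\weyl_0)$ bijectively onto itself. For $x\in\ConChamz$ and $r\in\LLL(\weyl_0)$ we then get $\intf{x^g,r}=\intf{x,r^{g^{-1}}}\ge 0$. Together with $\PPPts^g=\PPPts$, this gives $\ConChamz^g\subset\ConChamz$, and applying the same argument to $g^{-1}$ gives equality.

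Next I show the converse. Let $g\in\OG(\Lts,\PPPts)$ preserve $\ConChamz$. The image $\weyl_0^g$ is again a non-zero primitive isotropic vector in the closure of $\PPPts$. Its orthogonal complement satisfies $[\weyl_0^g]\sperp/[\weyl_0^g]\cong[\weyl_0]\sperp/[\weyl_0]\cong\Leech(-1)$, so $\weyl_0^g$ is a Weyl vector. By the argument of the first step, with $\weyl_0$ replaced by $\weyl_0^g$, we have $\LLL(\weyl_0)^g=\LLL(\weyl_0^g)$, and hence $\ConChamz^g=\ConCham(\weyl_0^g)$. Since $g$ preserves $\ConChamz$, this gives $\ConCham(\weyl_0^g)=\ConChamz$. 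Conway's theorem says $\weyl\mapsto\ConCham(\weyl)$ is a bijection, so its injectivity forces $\weyl_0^g=\weyl_0$, i.e.\ $g\in\OG(\Lts,\weyl_0)$.

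The main point needing care is the identity $\ConChamz^g=\ConCham(\weyl_0^g)$ for a general $g\in\OG(\Lts,\PPPts)$. This is a formal consequence of $g$ preserving the form, the set of $(-2)$-vectors, and $\PPPts$, so I expect no real obstacle. The substantive input is Conway's bijection theorem, which we are allowed to assume.
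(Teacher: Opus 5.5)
Your proof is correct and is exactly the argument the paper leaves implicit (the corollary is stated without proof): Proposition~\ref{prop:embeddingAOG} identifies $\dotinf$ with the stabilizer of $\weyl_0$ in $\OG(\Lts,\PPPts)$. The identity $\ConChamz^g=\ConCham(\weyl_0^g)$, combined with the injectivity of Conway's bijection $\weyl\mapsto\ConCham(\weyl)$, then shows that this stabilizer coincides with $\OG(\Lts,\ConChamz)$; the one detail you pass over is that $\OG(\Lts,\ConChamz)$ is by definition a subgroup of $\OG(\Lts)$, but any $g$ preserving $\ConChamz\subset\PPPts$ (which has non-empty interior) must preserve $\PPPts$, so your assumption $g\in\OG(\Lts,\PPPts)$ costs nothing.
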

We have 
\begin{equation}\label{eq:Leechrootlambda}
\LLL(\weyl_0)=\set{\LeechRoot{\lambda}}{\lambda\in \Leech},
\;\;\textrm{where}\;\;\; \LeechRoot{\lambda}:=\left(\frac{\intfLeech{\lambda, \lambda}-2}{2}, \; 1,\; \lambda\right).
\end{equation}
\subsection{$\FFF$-chambers and $\vGam$-simpleness}\label{subsec:FFF}
Let $S$ be a hyperbolic lattice
and $\PPP_S$ a positive cone of $S$.
Let $\FFF$ be a locally finite family of hyperplanes in $\PPP_S$.
An \emph{$\FFF$-chamber} is the closure in $\PPP_S$ of a connected component 
of the complement
\[
\PPP_S\;\;\setminus \;\; \bigcup_{H\in \FFF}\, H
\]
of the union of all hyperplanes in $\FFF$.
Let $D$ be an $\FFF$-chamber.
A \emph{wall} of $D$ is a closed subset of $D$
of the form $D\cap H$, where $H$ is a member of $\FFF$
such that $D\cap H$ contains a non-empty open subset of $H$.
We denote by $\walls(D)$ the set of walls of $D$.
A vector $v\in S\tensor\QQ$ is a \emph{defining vector} of a wall $w\in \walls(D)$ 
if $w=D\cap (v)\sperp$ and $\intf{v, a}>0$ for some (and hence any) interior point $a$ of $D$.
For a wall $D\cap H$ of $D$,
there exists a unique $\FFF$-chamber $D\sprime$
such that $D\ne D\sprime$ and that $D\cap H=D\sprime\cap H$.
We call this $D\sprime$ the \emph{$\FFF$-chamber adjacent to $D$ across the wall $D\cap H$}.
\par
Let $\vGam$ be a subgroup
of the group 
\[
\OG(S, \FFF):=\set{g\in \OG(S, \PPP_S)}{\textrm{the action of $g$ on $\PPP_S$ preserves $\FFF$}}.
\]
Then $\vGam$ acts on the set of $\FFF$-chambers.
We say that the set of $\FFF$-chambers is \emph{$\vGam$-simple}
if $\vGam$ acts transitively on the set of $\FFF$-chambers.
We fix an $\FFF$-chamber $D_0$, and put
\[
\Stab_{\vGam}(D_0):=\set{g\in \vGam}{D_0^g=D_0}.
\]
Then $\Stab_{\vGam}(D_0)$ acts on the set $\walls(D_0)$ of walls of $D_0$.
Let $\walls(D_0)/\Stab_{\vGam}(D_0)$ denote the set of 
$\Stab_{\vGam}(D_0)$-orbits in $\walls(D_0)$.
\begin{example}
Consider the case where $S=\Lts$ and 
\[
\FFF=\FFFts:= \set{(r)\sperp}{r\in \Lts,\;  \intf{r, r}=-2}.
\]
The Conway chambers are precisely the $\FFFts$-chambers,
and for a Weyl vector $\weyl$,
the set $\LLL(\weyl)$ 
is 
a set of 
defining vectors of all walls of $\ConCham(\weyl)$.
By  definition of $\FFFts$, 
we have $\OG(\Lts, \FFFts)=\OG(\Lts, \PPPts)$,
and for $\vGam=\OG(\Lts, \PPPts)$,
the set of Conway chambers  is $\vGam$-simple.
Moreover, we have $\Stab_{\vGam}(\ConChamz)=\dotinf$,
and $\dotinf$ 
 permutes 
the walls of $\ConChamz$ by 
$\LeechRoot{\lambda}^{(g, \tau)}=\LeechRoot{\lambda^g+\tau}$,
where $\LeechRoot{\lambda}\in \LLL(\weyl_0)$ is 
defined in~\eqref{eq:Leechrootlambda}.
\end{example}
The Borcherds method~\cite{MR913200, MR1654763} is 
based on the following observation:
\begin{proposition}\label{prop:observation}
Suppose that the set of $\FFF$-chambers is $\vGam$-simple.
For each orbit $o$ of the action of $\Stab_{\vGam}(D_0)$ on $\walls(D_0)$,
we choose an element $g_o\in \vGam$ such that
$D_0^{g_o}$ is adjacent to $D_0$ across a wall
belonging to $o$.
Then the group $\vGam$ is generated by the union of 
$\Stab_{\vGam}(D_0)$ and $\set{g_o}{o\in \walls(D_0)/\Stab_{\vGam}(D_0)}$.
\end{proposition}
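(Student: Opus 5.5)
The plan is the standard chamber-connectivity argument. Let $H\subset\vGam$ be the subgroup generated by $\Stab_{\vGam}(D_0)$ and the elements $g_o$. We will show $H=\vGam$. The strategy is to show that every $\FFF$-chamber $D$ lies in the $H$-orbit of $D_0$. Granting this, take any $g\in\vGam$. Then $D_0^g=D_0^h$ for some $h\in H$, so $gh\inv\in\Stab_{\vGam}(D_0)\subset H$, and hence $g\in H$. By $\vGam$-simpleness, every chamber has the form $D_0^g$, so no chamber is missed.

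To reach an arbitrary chamber, we first note that the $\FFF$-chambers are connected through walls. Fix interior points $a\in D_0$ and $b\in D$. Choose a general point $b'$ near $b$, inside $D$, such that the segment $[a,b']$ avoids all pairwise intersections $H_1\cap H_2$ of distinct members of $\FFF$; these have codimension $2$, so a general choice works. The segment lies in the convex cone $\PPP_S$ and is compact. By local finiteness it meets only finitely many hyperplanes of $\FFF$, each transversally at a single point lying in a wall of the chambers on either side. This yields a gallery $D_0=E_0,E_1,\dots,E_m=D$ in which $E_{i}$ and $E_{i+1}$ are adjacent across a wall.

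We then induct on $m$, proving $E_i\in D_0^H$. Suppose $E_i=D_0^{h}$ with $h\in H$. Then $E_{i+1}$ is adjacent to $E_i$ across a wall $w^{h}$, where $w\in\walls(D_0)$, because $h$ preserves $\FFF$ and therefore carries walls and adjacency to walls and adjacency. Let $o$ be the $\Stab_{\vGam}(D_0)$-orbit of $w$. Then $D_0^{g_o}$ is adjacent to $D_0$ across some $w'\in o$, say $w'=w^{s}$ with $s\in\Stab_{\vGam}(D_0)$. The chamber adjacent to $D_0$ across a given wall is unique, as recorded in Section~\ref{subsec:FFF}. Consequently $D_0^{g_o s\inv}$ is the chamber adjacent to $D_0$ across $w$, since $s\inv$ fixes $D_0$ and maps $w'$ to $w$. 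Applying $h$ and using uniqueness once more gives $E_{i+1}=D_0^{g_os\inv h}$, and $g_os\inv h\in H$.

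The only step needing care is the gallery construction, namely genericity together with local finiteness, which ensures that only finitely many crossings occur, each through a genuine wall. Once the gallery exists, the rest is formal.
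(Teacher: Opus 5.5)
Your proof is correct and follows the same route as the paper. Both arguments connect $D_0$ to $D_0^{\varphi}$ by a gallery of adjacent $\FFF$-chambers, show by induction that each chamber in the gallery equals $D_0^{g}$ for some $g$ in the subgroup generated by $\Stab_{\vGam}(D_0)$ and the $g_o$, and then absorb the remaining element into $\Stab_{\vGam}(D_0)$. You give more detail than the paper, which justifies the gallery only by the connectedness of $\PPP_S$ and leaves the inductive step implicit; your explicit step $E_{i+1}=D_0^{g_o s\inv h}$ is consistent with the paper's right-action convention.
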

\begin{proof}
Let $\varphi$ be an arbitrary element of $\vGam$.
We consider the $\FFF$-chamber $D_0^{\varphi}$.
Since $\PPP_S$ is connected, 
we have a sequence $D_0, D_1, \dots,  D_m=D_0^{\varphi}$
of $\FFF$-chambers such that $D_{i-1}$ and $D_i$ are 
adjacent across a wall for $i=1, \dots, m$.
We prove by induction on $i$ that there exists 
a product $g(i)$ of elements of $\Stab_{\vGam}(D_0)$ and 
 elements $g_o$ for $o\in \walls(D_0)/\Stab_{\vGam}(D_0)$
such that $D_i=D_0^{g(i)}$.
Then we have $\varphi=h g(m) $ for some $h\in \Stab_{\vGam}(D_0)$.
\end{proof}
%
%
\begin{remark}\label{rem:geom}
We give  several remarks on  applications of the Borcherds method to K3 and Enriques surfaces.
\par
(1) 
In these applications, 
the lattice $S$ is the N\'eron--Severi lattice of a surface $X$,
and $\vGam$ is the image of $\Aut(X)$ in $\OG(S,\PPP_S)$.
In many cases, the inclusion
$\vGam \subset \OG(S,\FFF)$
is guaranteed by the \emph{period condition} for $X$.
See~\cite[Section~5.2]{MR4759604}.
\par
(2) 
In some applications, 
the assumption of $\vGam$-simpleness fails.
See~\cite[Section 5.1]{MR4759604} for an algorithm
that takes the place of
Proposition~\ref{prop:observation}
in these cases.
\par
(3) 
In all geometric applications studied so far,
the number of walls of an $\FFF$-chamber is finite.
In the situations treated in this paper, 
the $\FFF$-chambers have infinitely many walls,
as will be explained in the next section.
\end{remark}
\subsection{Invariant sublattice of an involution in $\Lts$}\label{subsec:sublatLts}
We fix an involution
\begin{equation*}\label{eq:gamma}
\gamma=(\vep, \tau)\in \dotinf
\end{equation*}
 acting on $\Lts$, and 
 apply the Borcherds method to the primitive hyperbolic sublattice
\[
\Lplus:=\Lts(\gamma, +)
\]
of $\Lts$ 
with the positive cone $\PPPplus:=\PPPts\cap(\Lplus\tensor \RR)$.
Let
\[
\prplus\;\;\colon\;\; \Lts\tensor \RR \to \Lplus\tensor \RR
\]
denote the orthogonal projection.
A hyperplane $(r)\sperp\in \FFFts$ of $\PPPts$
defined by a $(-2)$-vector $r\in \Lts$
intersects $\PPPplus$
if and only if
$\intf{\prplus(r), \prplus(r)}<0$,
and in this case,
we have $ \PPPplus\cap (r)\sperp=(\prplus(r))\sperp$.
We put
\[
\FFFplus:=\set{(\prplus(r))\sperp}{\textrm{$r\in \Lts$ is a $(-2)$-vector satisfying 
$\intf{\prplus(r), \prplus(r)}<0$}},
\]
which is a locally finite family of hyperplanes in $\PPPplus$.
Then a closed subset $D$ of $\PPPplus$ is 
an $\FFFplus$-chamber if and only if the following hold:
\begin{enumerate}[label=(\alph*)]
\item 
$D$ contains a non-empty open subset of $\PPPplus$, and 
\item there exists a Conway chamber $\ConCham(\weyl)$
such that $D=\PPPplus\cap \ConCham(\weyl)$.
\end{enumerate}
We consider the following groups: 
\begin{equation}\label{eq:vGamdef}
\renewcommand{\arraystretch}{1.2}
\begin{array}{ccl}
\tilvGam &:=& \set{\tilde{g}\in \OG(\Lts, \PPPts)}{\Lplus^{\tilde{g}}=\Lplus}=\Cen(\OG(\Lts, \PPPts), \gamma), \\
\vGam &:=& \textrm{the image in $\OG(\Lplus, \PPPplus)$ of $\tilvGam$ under the restriction 
$\tilde{g}\mapsto \tilde{g}|\Lplus$}.
\end{array}
\end{equation}
%
Since the action of $\tilvGam$ on $\PPPts$ preserves $\FFFts$, we have 
\[
\vGam\;\;\subset \;\; \OG(\Lplus, \FFFplus).
\]
We will prove that 
the subgroup $\vGam$ of $ \OG(\Lplus, \PPPplus)$ is of finite index,
and that we can apply
Proposition~\ref{prop:observation}
to the set of $\FFFplus$-chambers
to compute $\vGam$.
\par
We put 
\[
\Lminus:=\Lts(\gamma, -),
\]
and let $\qplus$ and $\qminus$ be the discriminant forms 
of $\Lplus$ and $\Lminus$, respectively,
with the natural homomorphisms $\eta_{\pm}\colon \OG(L_{\pm})\to \OG(q_{\pm})$.
Since  $\Lts\subset \Lplus\dual\times \Lminus\dual$  is unimodular, 
it  induces an isomorphism $\qplus\cong \qminus$ and hence gives rise to 
$\OG(\qplus)\cong \OG(\qminus)$.
Then we have 
a  fiber-product diagram
\begin{equation}\label{eq:groupfibprodP}
\renewcommand{\arraystretch}{1.4}
\begin{array}{ccc}
\tilvGam &\to & \OG(\Lminus) \\
\downarrow & \square& \downarrow\rlap{ ${}^{\etaminus}$} \\
\OG(\Lplus, \PPPplus) & \maprightsb{\etaplus} & \OG(\qplus)\cong \OG(\qminus), 
\end{array}
\end{equation}
which is just 
a hyperbolic version of the diagram~\eqref{eq:groupfibprodCenvep}.
Since $\OG(q_{\pm})$ is finite,  the index of $\vGam$ in $\OG(\Lplus, \PPPplus)$ is finite.
Moreover, we also have the following:
\begin{proposition}\label{prop:vGamindex}
Suppose that  
$\etaplus\colon \OG(\Lplus, \PPPplus) \to \OG(\qplus)$ is surjective.
Then the  index of $\vGam$ in $\OG(\Lplus, \PPPplus)$  
is equal to  the index  of the image 
of $\etaminus\colon \OG(\Lminus) \to \OG(\qminus)$ 
in $\OG(\qminus)$.
\qed
\end{proposition}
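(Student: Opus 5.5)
The plan is to read everything off the fiber-product diagram~\eqref{eq:groupfibprodP}, which identifies $\tilvGam$ with
\[
\set{(\gplus,\gminus)\in \OG(\Lplus,\PPPplus)\times\OG(\Lminus)}{\etaplus(\gplus)=\etaminus(\gminus)},
\]
where $\OG(\qplus)$ and $\OG(\qminus)$ are identified through the isomorphism induced by the unimodular overlattice $\Lts\subset\Lplus\dual\times\Lminus\dual$. By definition, $\vGam$ is the image of $\tilvGam$ under the first projection $(\gplus,\gminus)\mapsto\gplus$.

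First I describe this image. An element $\gplus\in\OG(\Lplus,\PPPplus)$ lies in $\vGam$ exactly when some $\gminus\in\OG(\Lminus)$ satisfies $\etaminus(\gminus)=\etaplus(\gplus)$. Writing $H:=\Image(\etaminus)\subset\OG(\qminus)\cong\OG(\qplus)$, this gives $\vGam=\etaplus\inv(H)$. Behind this is the standard gluing criterion: $\gplus\oplus\gminus$ extends to an isometry of $\Lts$ if and only if it preserves the graph of the anti-isometry $\qplus\cong-\qminus$. This is exactly the compatibility condition in the diagram, and the resulting isometry automatically preserves $\PPPts$ because $\gplus$ preserves $\PPPplus$ and $\Lminus$ is negative-definite.

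Next I use the surjectivity hypothesis. Since $\etaplus\colon\OG(\Lplus,\PPPplus)\to\OG(\qplus)$ is surjective, it induces a bijection between the left cosets of $\etaplus\inv(H)$ in $\OG(\Lplus,\PPPplus)$ and the left cosets of $H$ in $\OG(\qplus)$. Therefore
\[
[\OG(\Lplus,\PPPplus):\vGam]=[\OG(\qplus):H]=[\OG(\qminus):\Image(\etaminus)],
\]
and the last equality holds because the identification $\OG(\qplus)\cong\OG(\qminus)$ is a group isomorphism carrying $H$ to itself.

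I expect no real obstacle. The one point needing care is that the diagram really is a fiber product with $\OG(\Lplus,\PPPplus)$ in the corner, rather than $\OG(\Lplus)$. This is the positive-cone preservation noted above. It also uses the fact that the centralizer of $\gamma$ is precisely the stabilizer of $\Lplus$, which holds because preserving $\Lplus$ forces preserving $\Lminus=\Lplus\sperp$, so the element commutes with $\gamma=\id\oplus(-\id)$.
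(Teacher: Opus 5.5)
Your proposal is correct and is essentially the argument the paper intends. The paper states this proposition without proof, directly after the fiber-product diagram~\eqref{eq:groupfibprodP}, and your route is the one it implies: read off $\vGam=\etaplus\inv(\Image\etaminus)$ from the fiber product, then use surjectivity of $\etaplus$ to transport the index to $\OG(\qplus)\cong\OG(\qminus)$; your remarks on why $\OG(\Lplus,\PPPplus)$ may sit in the corner of the diagram and why $\tilvGam$ is the centralizer of $\gamma$ fill in details the paper takes for granted.
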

\begin{proposition}\label{prop:interior}
Every $\FFFplus$-chamber $D=\PPPplus \cap \ConCham(\weyl)$
contains an interior point of the Conway chamber $ \ConCham(\weyl)$.
In particular, for every $\FFFplus$-chamber $D$,
the Conway chamber $ \ConCham(\weyl\sprime) $
such that $D=\PPPplus \cap \ConCham(\weyl\sprime)$ is unique.
\end{proposition}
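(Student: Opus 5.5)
The plan is to use the involution $\gamma$ itself. Since $\gamma$ preserves $\PPPts$ and $\FFFts$, it maps Conway chambers to Conway chambers. Moreover, $\gamma$ acts trivially on $\Lplus\tensor\RR$.

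Let $D=\PPPplus\cap\ConCham(\weyl)$ be an $\FFFplus$-chamber. Then
\[
D=D^{\gamma}\subset \PPPplus\cap\ConCham(\weyl)^{\gamma}=\PPPplus\cap\ConCham(\weyl^{\gamma}).
\]
By condition (a), $D$ contains a non-empty open subset of $\PPPplus$. So $\ConCham(\weyl)\cap\ConCham(\weyl^\gamma)$ contains a non-empty open subset of $\PPPplus$.

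Next, pick an interior point $a$ of $D$ relative to $\PPPplus$, chosen generic. Since $\FFFts$ is locally finite, only finitely many mirrors $(r)\sperp$ pass near $a$. Choose $a$ off the countably many hyperplanes $(r)\sperp$ that meet $\PPPplus$ in a proper hyperplane; this is possible because $\FFFplus$ is locally finite, and a generic point of an open set avoids them.

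Now let $r$ be a $(-2)$-vector with $a\in(r)\sperp$. Then $(r)\sperp\supset\PPPplus$, i.e.\ $\prplus(r)=0$, so $r\in\Lminus$. But Theorem~\ref{thm:main2} and Proposition~\ref{prop:involL} show that $\Lminus\cong M(g,-,\tau)(-1)$ is a sublattice of $\Leech(-1)$. Since the Leech lattice has no vectors of norm $2$, $\Lminus$ contains no $(-2)$-vectors. This is the key step, and the main point to check carefully.

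Hence $a$ lies on no mirror of $\FFFts$. Since $a\in D\subset\ConCham(\weyl)$, it follows that $a$ is an interior point of $\ConCham(\weyl)$.

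For uniqueness, suppose $D=\PPPplus\cap\ConCham(\weyl\sprime)$. Then $a\in\ConCham(\weyl\sprime)$. But $a$ lies in the interior of $\ConCham(\weyl)$, and Conway chambers have pairwise disjoint interiors, with the interior of $\ConCham(\weyl)$ meeting no other chamber. Therefore $\ConCham(\weyl\sprime)=\ConCham(\weyl)$, and by Conway's theorem $\weyl\sprime=\weyl$.

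The only delicate point is the genericity argument. It needs the local finiteness of $\FFFts$ near points of $\PPPplus$, which holds since $\PPPplus\subset\PPPts$, so that the union of proper hyperplane sections has empty interior in $\PPPplus$.
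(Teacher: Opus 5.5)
Your proof is correct and follows the paper's argument. In both, the key point is that $\Lminus$ embeds in $\Leech(-1)$ and so has no $(-2)$-vectors, hence no mirror $(r)\sperp$ contains $\PPPplus$; you spell out the genericity and uniqueness steps that the paper leaves implicit, and your opening observation that $D\subset\PPPplus\cap\ConCham(\weyl^{\gamma})$ is never used and can be dropped.
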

\begin{proof}
Recall from Theorem~\ref{thm:main2} that 
the orthogonal complement $\Lminus$
 of $\Lplus$ in $\Lts$
is isomorphic to a sublattice of $\Leech(-1)$,
and hence does not contain any $(-2)$-vector.
Therefore no hyperplane $(r)\sperp\in \FFFts$ of $\PPPts$ defined by a $(-2)$-vector $r$ 
contains the subspace $\PPPplus$.
\end{proof}
\begin{proposition}\label{prop:D0}
The closed subset 
\[
D_0:=\PPPplus\cap \ConChamz
\]
of $\PPPplus$ is an $\FFFplus$-chamber.
\end{proposition}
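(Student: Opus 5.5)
By the characterization of $\FFFplus$-chambers given just before the definition~\eqref{eq:vGamdef}, $D_0$ is an $\FFFplus$-chamber as soon as we check conditions (a) and (b). Condition (b) holds by construction, taking $\weyl=\weyl_0$. So the plan is to verify condition (a): $D_0=\PPPplus\cap\ConChamz$ contains a non-empty open subset of $\PPPplus$.

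\textbf{Step 1: a $\gamma$-invariant vector at the right place.} The key observation is that $\gamma\in\dotinf=\OG(\Lts,\weyl_0)$ fixes $\weyl_0$. Hence $\weyl_0\in\Lplus$, and $\weyl_0$ lies in the closure of $\PPPplus$, but it is isotropic, so it is not itself an interior point. To get an interior point, pick any $y\in\Lplus$ with $\intf{y,y}>0$ in $\PPPplus$; one exists because $\Lplus$ is hyperbolic by Theorem~\ref{thm:main2}.

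\textbf{Step 2: move $y$ toward $\weyl_0$.} For small $t>0$ set
\[
x_t:=\weyl_0+t\,y \in \Lplus\tensor\RR .
\]
Since $\intf{x_t,x_t}=2t\intf{\weyl_0,y}+t^2\intf{y,y}$ and $\intf{\weyl_0,y}>0$ (two vectors in the closure of the same positive cone, not both isotropic on the same line), $x_t$ lies in $\PPPplus$.

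\textbf{Step 3: show $x_t$ is interior to $\ConChamz$.} For every Leech root $r=\LeechRoot{\lambda}$ we have
\[
\intf{x_t,r}=1+t\,\intf{y,\LeechRoot{\lambda}} .
\]
This requires a uniform bound over infinitely many roots, which is the main obstacle. Write $y=(a,b,\mu)$. Then $\intf{y,\LeechRoot{\lambda}}$ equals $a+b(\intfLeech{\lambda,\lambda}-2)/2-\intfLeech{\mu,\lambda}$. Here $b=\intf{y,\weyl_0}>0$, so this is a quadratic in $\lambda$ with positive leading coefficient and is therefore bounded below by some constant $-c$. Choosing $t<1/c$ gives $\intf{x_t,r}>0$ for every $r\in\LLL(\weyl_0)$, with a uniform positive lower bound $1-tc$.

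\textbf{Step 4: conclude openness.} Local finiteness, or the same quadratic bound applied to nearby points, shows that a small neighbourhood of $x_t$ in $\PPPplus$ stays strictly inside $\ConChamz$. Concretely, for $y'$ near $y$ the constants $b$, $a$, $\mu$ vary continuously, so the bound $c$ can be taken uniform on a neighbourhood. This gives the required open subset of $\PPPplus$ contained in $D_0$, establishing (a).
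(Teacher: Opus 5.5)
Your proof is correct and is essentially the paper's argument. The paper takes the specific invariant vector $y=\weyl_1+\weyl_1^{\gamma}=(\tau^2/2,\,2,\,\tau)$, so its point $p=t\weyl_0+\weyl_1+\weyl_1^{\gamma}$ with $t>2$ is your $x_{1/t}$ up to scaling, and it completes the square to get $\intfL{\LeechRoot{\lambda},p}\ge t-2$ uniformly in $\lambda$. Your version with an arbitrary $y\in\PPPplus$ works equally well (at the cost of invoking hyperbolicity of $\Lplus$), and your Step~4 spells out the openness argument that the paper leaves implicit.
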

\begin{proof}
It is enough to show that $D_0$ contains a non-empty open subset of $\PPPplus$.
To avoid confusion, we use $\intfL{\phantom{a, a}}$ to denote the intersection form of $\Lts$
and $\intfLeech{\phantom{a, a}}$ to denote that of $\Leech$.
For $x\in \Leech\tensor\RR$, we write $x^2$ for $\intfLeech{x,x}$.
Let $t$ be a real number with $t>2$.
We put
\[
p:=t\weyl_0+\weyl_1+\weyl_1^{\gamma}=\left(\; t+\frac{\tau^2}{2}, \;\;2, \;\; \tau\;\right).
\]
%
Then we have $p^{\gamma}=p$, $\intfL{p, \weyl_0}>0$, and $\intfL{p, p}>0$,
and hence $p\in \PPPplus$.
We have 
\[
\intfL{\LeechRoot{\lambda}, p}=t+\frac{\lambda^2}{2}+\frac{(\tau-\lambda)^2}{2}-2\ge t-2
\]
for any $\lambda\in \Leech$.
Therefore the point $p$ is an interior point of $D_0$ relative to $\PPPplus$.
\end{proof}
\begin{proposition}\label{prop:CenStab}
The restriction homomorphism
$\tilg\mapsto \tilg|\Lplus$
gives rise to a surjective homomorphism 
$\Cen(\dotinf, \gamma)\surj \Stab_{\vGam}(D_0)$.
\end{proposition}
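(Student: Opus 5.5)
We prove two things: that the restriction map sends $\Cen(\dotinf,\gamma)$ into $\Stab_{\vGam}(D_0)$, and that every element of $\Stab_{\vGam}(D_0)$ arises this way. Throughout we use the identifications $\dotinf=\OG(\Lts,\ConChamz)=\OG(\Lts,\weyl_0)$ and $\tilvGam=\Cen(\OG(\Lts,\PPPts),\gamma)$ from~\eqref{eq:vGamdef}.

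\emph{Well-definedness.} Let $\tilg\in\Cen(\dotinf,\gamma)$. Since $\tilg$ commutes with $\gamma$, it preserves $\Lplus=\Lts(\gamma,+)$, and it preserves $\PPPts$. Hence $\tilg\in\tilvGam$, and so $\tilg|\Lplus\in\vGam$. Since $\tilg\in\dotinf$, it preserves $\ConChamz$. Therefore $\tilg$ preserves $D_0=\PPPplus\cap\ConChamz$, which gives $\tilg|\Lplus\in\Stab_{\vGam}(D_0)$. The map $\tilg\mapsto\tilg|\Lplus$ is clearly a group homomorphism.

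\emph{Surjectivity.} Take $g\in\Stab_{\vGam}(D_0)$, and choose a lift $\tilg\in\tilvGam$ with $\tilg|\Lplus=g$. We claim that $\tilg$ itself already lies in $\Cen(\dotinf,\gamma)$.
\begin{itemize}
\item Since $\tilg\in\tilvGam$, it commutes with $\gamma$. So it remains to show $\tilg\in\dotinf$, that is, $\ConChamz^{\tilg}=\ConChamz$.
\item Because $\tilg\in\OG(\Lts,\PPPts)$, it permutes the Conway chambers. Hence $\ConChamz^{\tilg}=\ConCham(\weyl)$ for some Weyl vector $\weyl$.
\item Intersecting with $\PPPplus$, which $\tilg$ preserves, gives
\[
\PPPplus\cap\ConCham(\weyl)=(\PPPplus\cap\ConChamz)^{\tilg}=D_0^{g}=D_0=\PPPplus\cap\ConChamz.
\]
\item By Proposition~\ref{prop:D0}, $D_0$ is an $\FFFplus$-chamber. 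By the uniqueness part of Proposition~\ref{prop:interior}, a Conway chamber whose intersection with $\PPPplus$ is $D_0$ is unique. Hence $\ConCham(\weyl)=\ConChamz$.
\item By Conway's bijection between Weyl vectors and Conway chambers, $\weyl=\weyl_0$. Therefore $\tilg\in\OG(\Lts,\ConChamz)=\dotinf$.
\end{itemize}
This shows $\tilg\in\Cen(\dotinf,\gamma)$ and $\tilg|\Lplus=g$, so the map is surjective.

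The only substantive input is the uniqueness in Proposition~\ref{prop:interior}. That proposition in turn rests on the fact that $\Lminus$ contains no $(-2)$-vectors, which follows from Theorem~\ref{thm:main2}. Everything else is formal. The one point to state carefully is that $\tilg$ preserves $\PPPplus$: this holds because $\tilg$ preserves both $\PPPts$ and $\Lplus\tensor\RR$.
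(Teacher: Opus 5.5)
Your proof is correct and follows the paper's argument. Elements of $\Cen(\dotinf,\gamma)$ preserve $\ConChamz$, so their restrictions lie in $\Stab_{\vGam}(D_0)$; for surjectivity, any lift in $\tilvGam$ stabilizes $\ConChamz$ by the uniqueness in Proposition~\ref{prop:interior}. Your final detour through Weyl vectors is harmless but unnecessary, since $\ConChamz^{\tilg}=\ConChamz$ already gives $\tilg\in\dotinf$.
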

\begin{proof}
Recall that 
$\tilvGam=\Cen(\OG(\Lts, \PPPts), \gamma)$.
Suppose that $\tilg\in \Cen(\dotinf, \gamma)$
and we put $g=\tilg|\Lplus$.
Then we have $g\in \vGam$.
Since $\tilg$ stabilizes $\ConChamz$,
we have $D_0^g=D_0$.
Therefore $ g\in \Stab_{\vGam}(D_0)$ holds.
To show the surjectivity,
let $g$ be an element of $\Stab_{\vGam}(D_0)$.
Since $g\in \vGam$,
we have an element $\tilg\in \tilvGam$ such that $g=\tilg|\Lplus$.
Proposition~\ref{prop:interior} implies that $\tilg$ stabilizes $\ConChamz$,
and hence $\tilg$ belongs to $\Cen(\dotinf, \gamma)$.
\end{proof}
Combining Proposition~\ref{prop:CenStab} with the exact sequence~\eqref{eq:exactCen}, we see that 
$\Stab_{\vGam}(D_0)$ is finitely generated,
and that a generating set can be computed.
\begin{proposition}\label{prop:PhiSimple}
The set of $\FFFplus$-chambers in $\PPPplus$ is $\vGam$-simple.
\end{proposition}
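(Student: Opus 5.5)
We must show that $\vGam$ acts transitively on the $\FFFplus$-chambers. Let $D$ be any $\FFFplus$-chamber. By Proposition~\ref{prop:interior}, $D=\PPPplus\cap\ConCham(\weyl)$ for a unique Weyl vector $\weyl$. Since the Conway chambers are exactly the $\FFFts$-chambers and $\OG(\Lts,\PPPts)$ acts transitively on them, there is $\tilg\in\OG(\Lts,\PPPts)$ with $\ConCham(\weyl)=\ConChamz^{\tilg}$. The obstacle is that $\tilg$ need not commute with $\gamma$. The plan is to correct $\tilg$ by an element of $\dotinf=\OG(\Lts,\ConChamz)$ so that the product lies in $\tilvGam=\Cen(\OG(\Lts,\PPPts),\gamma)$.

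First, I will show that $\gamma$ preserves $\ConCham(\weyl)$. Take an interior point $a$ of $D$ relative to $\PPPplus$. By Proposition~\ref{prop:interior}, $a$ can be taken to be an interior point of $\ConCham(\weyl)$: the set of points of $D$ lying on no wall of $\ConCham(\weyl)$ is nonempty, and it is open in $\PPPplus$ by local finiteness. Since $a\in\Lplus\tensor\RR$, we have $a^\gamma=a$. Hence $\ConCham(\weyl)^\gamma$ is a Conway chamber containing $a$ in its interior, so $\ConCham(\weyl)^\gamma=\ConCham(\weyl)$. Consequently $\gamma':=\tilg\gamma\tilg\inv$ stabilizes $\ConChamz$, so $\gamma'\in\dotinf$ is an involution.

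Second, I will show that $\gamma'$ is conjugate to $\gamma$ inside $\dotinf$, not merely inside $\OG(\Lts,\PPPts)$. Here I would use Theorem~\ref{thm:main2}. The involutions $\gamma'$ and $\gamma$ are conjugate in $\OG(\Lts)$, so $\Lts(\gamma',\pm)\cong\Lts(\gamma,\pm)$. By Table~\ref{table:isoms}, the nine classes of Theorem~\ref{thm:main1} are distinguished by these isomorphism classes, so $[\gamma']=[\gamma]$ in $\Invols(\dotinf)$. Choose $h\in\dotinf$ with $h\gamma' h\inv=\gamma$; with the right-action convention one must fix the correct order of the factors, which is routine.

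Third, set $\tilg_1:=h\tilg$ (or $\tilg h$, depending on convention), chosen so that $\tilg_1\gamma\tilg_1\inv=\gamma$. Then $\tilg_1\in\tilvGam$. Since $h$ stabilizes $\ConChamz$, we still have $\ConChamz^{\tilg_1}=\ConCham(\weyl)$. Because $\tilg_1$ preserves $\Lplus$, it maps $D_0=\PPPplus\cap\ConChamz$ onto $\PPPplus\cap\ConCham(\weyl)=D$. Therefore $g:=\tilg_1|\Lplus\in\vGam$ satisfies $D_0^g=D$, which proves $\vGam$-simpleness.

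The main obstacle is the second step. It needs the fact that conjugacy in $\OG(\Lts,\PPPts)$ between involutions of $\dotinf$ already implies conjugacy in $\dotinf$. The cleanest justification is the lattice-invariant table of Theorem~\ref{thm:main2}, which we may assume. One should also check that the first step's interior-point argument is not circular, which it is not: only Proposition~\ref{prop:interior} is used.
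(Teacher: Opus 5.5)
Your proof is correct and is essentially the paper's argument. Both move $\ConCham(\weyl)$ onto $\ConChamz$, use Proposition~\ref{prop:interior} to see that the conjugated involution stabilizes $\ConChamz$ and so lies in $\dotinf$, invoke Theorem~\ref{thm:main2} to conjugate it back to $\gamma$ inside $\dotinf$, and restrict the corrected element to $\Lplus$. With the paper's right-action convention, your choice $\tilg_1=h\tilg$ is the correct order of factors.
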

\begin{proof}
Let $D=\PPPplus \cap \ConCham(\weyl)$ be an $\FFFplus$-chamber.
We will find an element $\varphi$ of $\vGam$ 
that maps $D$ to $D_0$.
There exists an element $g$ of $W(\Lts)$
such that $\ConCham(\weyl)^g=\ConCham(\weyl_0)$.
Since ${\Lplus}^g=\Lts(g\inv \gamma g, +)$, 
we have $D^g\subset \Lts(g\inv \gamma g, +)\tensor \RR$.
Since $D$ contains an interior point of $\ConCham(\weyl)$ by 
Proposition~\ref{prop:interior},
the space $\Lts(g\inv \gamma g, +)\tensor \RR$ contains an interior point of $\ConCham(\weyl_0)$.
Therefore $g\inv \gamma g$ stabilizes $\ConCham(\weyl_0)$,
and hence $g\inv \gamma g \in \dotinf$.
The lattices $\Lts(g\inv \gamma g, + )$ and $\Lts(g\inv \gamma g, -)={\Lminus}^g$
are isomorphic to $\Lplus$ and $\Lminus$, respectively.
Hence, by Theorem~\ref{thm:main2},
the involutions $g\inv \gamma g$ and $\gamma$ are conjugate in $\dotinf$.
Thus we have an element $h\in \dotinf$
such that $\gamma g h =gh\gamma $.
It follows that $gh$ belongs to $\tilvGam$ and hence preserves $\Lplus$ and $\PPPplus$.
We then put
\[
\varphi:=gh|\Lplus\in \vGam.
\]
Since $\ConCham(\weyl)^{gh}=\ConCham(\weyl_0)$,
we obtain $D^\varphi=D_0$.
\end{proof}
\section{Walls of the $\FFFplus$-chamber $D_0$}\label{sec:wallsD0}
In this section, 
we fix an involution $\gamma=(\vep, \tau)$ of $\dotinf$ as in Section~\ref{subsec:sublatLts},
 and  investigate the hyperbolic lattice $\Lplus=\Lts(\gamma, +)$
 more closely.
 Here $\vep$ is one of $\vep_8, \vep_{12}, \vep_{16}$.
(The case $\vep=\vep_{24}$ is not interesting as $\rank \Lplus=2$.)
In Section~\ref{subsec:finite},
we show that 
the action of $\Stab_{\vGam}(D_0)$ on the set $\walls(D_0)$ of 
walls of $D_0$ has only finitely many orbits.
In Sections~\ref{subsec:walls8},~\ref{subsec:walls12},~\ref{subsec:walls16}, 
we compute the index of $\vGam$ in
$\OG(\Lplus, \PPPplus)$,
and describe the walls of $D_0$.
\par
In this section,
we continue to use
$\intfL{\phantom{a, a}}$ and $\intfLeech{\phantom{a, a}}$ 
to denote the intersection form of $\Lts$
and of $\Leech$, respectively.
For $x\in \Leech\tensor\RR$, we write $x^2$ for $\intfLeech{x,x}$,
as in the proof of Proposition~\ref{prop:D0}.
\subsection{Finiteness of the number of orbits in $\walls(D_0)$}\label{subsec:finite}
The following theorem combined with the results proved in the previous section
enables us to compute a finite generating set of $\vGam$
by applying Proposition~\ref{prop:observation} to $D_0=\PPPplus\cap \ConChamz$.
In the proof,
we present a method to enumerate the orbits explicitly.
\begin{theorem}
The action of $\Stab_{\vGam}(D_0)$ on $\walls(D_0)$ has only finitely many orbits.
\end{theorem}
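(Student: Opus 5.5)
The plan is to parametrize the walls of $D_0$ by Leech roots and then reduce everything modulo the translation subgroup $\Leech(\vep,+)\subset\Cen(\dotinf,\gamma)$, which by Proposition~\ref{prop:CenStab} acts on $D_0$ through $\Stab_{\vGam}(D_0)$.

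\textbf{Step 1: walls come from Leech roots.} Every wall of $D_0=\PPPplus\cap\ConChamz$ has the form $D_0\cap(\prplus(r))\sperp$ for some $(-2)$-vector $r$ of $\Lts$ with $\intf{\prplus(r),\prplus(r)}<0$. Since $D_0$ is contained in $\ConChamz$ and meets its interior (Proposition~\ref{prop:interior}), the hyperplane $(r)\sperp$ must support a wall of $\ConChamz$. Hence $r$ may be taken to be a Leech root $\LeechRoot{\lambda}$ with $\lambda\in\Leech$. So $\walls(D_0)$ is a quotient of a subset of $\Leech$.

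\textbf{Step 2: the projected norm.} Write $\lambda=\lambda_++\lambda_-$ with $\lambda_\pm\in\Leech(\vep,\pm)\tensor\QQ$. Using the explicit action of Proposition~\ref{prop:embeddingAOG}, compute $\prplus(\LeechRoot{\lambda})=\frac12\bigl(\LeechRoot{\lambda}+\LeechRoot{\lambda}^{\gamma}\bigr)$. Since $\LeechRoot{\lambda}^{\gamma}=\LeechRoot{\lambda^{\vep}+\tau}$, I expect
\[
\intf{\prplus(\LeechRoot{\lambda}),\prplus(\LeechRoot{\lambda})}=-2+\tfrac12\intf{\LeechRoot{\lambda},\LeechRoot{\lambda^\vep+\tau}}=-1-\tfrac14\bigl(\lambda-\lambda^{\vep}-\tau\bigr)^2 ,
\]
or a formula of this shape depending only on $\lambda_-$, namely on $2\lambda_--\tau$. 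In particular the translation part $\lambda_+$ plays no role, and the norm condition is automatic.

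\textbf{Step 3: finiteness modulo translations.} A translation $(\id,\mu)$ with $\mu\in\Leech(\vep,+)$ sends $\LeechRoot{\lambda}$ to $\LeechRoot{\lambda+\mu}$, so it changes $\lambda_+$ arbitrarily within a coset of $\Leech(\vep,+)$ while fixing $\lambda_-$. The key claim is that if $\LeechRoot{\lambda}$ actually defines a wall of $D_0$, then $(2\lambda_--\tau)^2$ is bounded by a constant depending only on $\gamma$. Once this holds, $\lambda_-$ ranges over finitely many values in the finite-rank lattice $\frac12\Leech(\vep,-)$ (the image of $\Leech$ under projection). Moreover, for fixed $\lambda_-$ the possible $\lambda_+$ form finitely many cosets of $\Leech(\vep,+)$, because $\Leech/(\Leech(\vep,+)\oplus\Leech(\vep,-))$ is finite. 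So there are only finitely many orbits, even under the translation subgroup alone.

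\textbf{Main obstacle: the bound on $\lambda_-$.} I would get it by exhibiting domination. If $\lambda_-$ is far from $\tau/2$, compare with a root $\LeechRoot{\lambda'}$ where $\lambda'_-$ is closer to $\tau/2$ and $\lambda'_+=\lambda_+$ (adjusted within the finite coset data). Then show that $\prplus(\LeechRoot{\lambda})$ is a nonnegative combination of $\prplus(\LeechRoot{\lambda'})$ and $\weyl_0$, or of vectors that are nonnegative on $D_0$. This forces $(\prplus(\LeechRoot{\lambda}))\sperp\cap D_0$ to have empty relative interior.

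Concretely, I expect $\intfL{\LeechRoot{\lambda},x}$ for $x\in\PPPplus$ to split as (terms in $\lambda_+$) plus $\frac12(2\lambda_--\tau)^2$ times a positive coordinate of $x$. A covering-radius argument for $\Leech(\vep,-)$ then supplies $\lambda'$ with $(2\lambda'_--\tau)^2$ at most the squared covering radius of $2\Leech$-type data, and it dominates.

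For the explicit enumeration, list the finitely many candidate $\lambda_-$ in this ball. Test each candidate for being a genuine wall by a linear-programming or feasibility check, then reduce modulo the finite group $\Sigma_\vep([\tau])$ acting through the exact sequence~\eqref{eq:exactCen}.
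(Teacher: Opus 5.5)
Your overall structure matches the paper's: walls come from Leech roots $\LeechRoot{\lambda}$, you quotient by the translations $\Leechplus\subset\Cen(\dotinf,\gamma)$, and then you bound $\lambda_-$. The gap is that the bound on $\lambda_-$, which you correctly identify as the crux, is never proved. You treat it as an obstacle only because of a miscalculation in Step 2.

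Put $r=\LeechRoot{\lambda}$ and $r'=\LeechRoot{\lambda^{\vep}+\tau}$. Then $\prplus(r)=\frac12(r+r')$ and $\intfL{r,r}=\intfL{r',r'}=-2$, so $\intfL{\prplus(r),\prplus(r)}=-1+\frac12\intfL{r,r'}$, not $-2+\cdots$. Moreover $\intfL{\LeechRoot{\lambda},\LeechRoot{\mu}}=\frac12(\lambda-\mu)^2-2$ in the Leech norm. Hence
\[
\intfL{\prplus(\LeechRoot{\lambda}),\prplus(\LeechRoot{\lambda})}=\tfrac14(2\lambda_--\tau)^2-2=\bigl(\lambda_--\tfrac{\tau}{2}\bigr)^2-2 .
\]
The quadratic term has the opposite sign to yours, so the norm condition is not automatic. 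It is exactly the bound you are looking for.

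As you note yourself in Step 1, a wall of $D_0$ lies on a member of $\FFFplus$, i.e.\ on some $(\prplus(r))\sperp$ with $\prplus(r)$ of negative norm. So $\lambda_-$ lies in the finite set $\Xitau=\{v\in\Leechminus\dual : (v-\tau/2)^2<2\}$. Since $\Leech\cap(\Leechplus\tensor\QQ)=\Leechplus$, each $v\in\Xitau$ corresponds to exactly one $\Leechplus$-coset of $\lambda$'s, not merely finitely many. This gives $\MMM(D_0)/\Leechplus\cong\Xitau$ and finiteness at once; that is the paper's proof. Roots with $(\lambda_--\tau/2)^2\ge 2$ are excluded outright: $\prplus(\LeechRoot{\lambda})$ then has nonnegative norm and pairs to $1$ with $\weyl_0$, so it lies in the closed positive cone and its hyperplane misses $\PPPplus$.

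Your domination sketch is stated only as an expectation (``I expect'', ``it dominates''), so as written it does not close the gap, though it can be made rigorous. From the explicit coordinates of $\prplus(\LeechRoot{\lambda})$, if $\lambda'=\lambda+\mu$ with $\mu\in\Leechminus$ (so $\lambda'_+=\lambda_+$), then
\[
\prplus(\LeechRoot{\lambda})-\prplus(\LeechRoot{\lambda'})=\tfrac12\Bigl(\bigl(\lambda_--\tfrac{\tau}{2}\bigr)^2-\bigl(\lambda'_--\tfrac{\tau}{2}\bigr)^2\Bigr)\weyl_0 .
\]
Also $\intfL{x,\weyl_0}>0$ for $x\in\PPPplus$. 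So a wall forces $\lambda_-$ to be a point of $\lambda_-+\Leechminus$ closest to $\tau/2$, and $(\lambda_--\tau/2)^2$ is then bounded by the squared covering radius of $\Leechminus$. This is a valid but weaker bound, and it becomes unnecessary once the norm is computed correctly.
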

\begin{proof}
We put
\[
\Lplus:=\Lts(\gamma, +),
\quad
\Lminus:=\Lts(\gamma,-), 
\quad
\Leechplus:=\Leech(\vep, +),
\quad
\Leechminus:=\Leech(\vep,-), 
\]
and denote the orthogonal projections by
\[
\prLplus \colon\Lts\to \Lplus\dual, \quad 
\prLminus\colon \Lts\to \Lminus\dual, \qquad 
\prLeechplus\colon \Leech\to \Leechplus\dual, \quad 
\prLeechminus\colon\Leech\to \Leechminus\dual.
\]
Since $\Lts$ and $\Leech$ are unimodular, 
these projections are surjective.
%
%
%
To simplify the notation, for $\lambda\in \Leech$, 
we put
\[
\lambdaplus:=\prLeechplus(\lambda),
\qquad 
\lambdaminus:=\prLeechminus(\lambda).
\]
Since $\Leechplus$ and $ \Leechminus$ are orthogonal, and 
the component $\tau$ of 
the fixed involution $\gamma=(\vep, \tau)$ belongs to $ \Leechminus$, 
we have 
\[
\lambda^2=\lambdaplus^2+\lambdaminus^2,
\quad
\intfLeech{\lambda, \tau}=\intfLeech{\lambdaminus, \tau}, 
\quad
\left(\lambdaplus +\frac{\tau}{2} \right)^2=\lambdaplus^2+\frac{\tau^2}{4}.
\]
For $\lambda\in \Leech$, we put
\begin{equation}\label{eq:defmlambda}
\renewcommand{\arraystretch}{2}
\begin{array}{lll}
m(\lambda) &:=& \prLplus(\LeechRoot{\lambda})=\dfrac{\;1\;}{2}(\LeechRoot{\lambda}+
\LeechRoot{\lambda^\vep+\tau})\\
&=&
\left(\dfrac{\lambda^2}{2}-\dfrac{\intfLeech{\lambda, \tau}}{2}+\dfrac{\tau^2}{4}-1, 
\;\; 1,\;\; \lambdaplus +\dfrac{\tau}{2}\right)\\
&=&
\left(
\dfrac{\lambdaplus^2}{2}+\dfrac{1}{\,2\,}\left(\lambdaminus-\dfrac{\tau}{\,2\,}\right)^2+\dfrac{\tau^2}{8}-1, 
\;\; 1,\;\; \lambdaplus +\dfrac{\tau}{2}\right).
\end{array}
\end{equation}
For $v\in \Leechminus\dual$,
we put
\[
n(v):=\left(v-\dfrac{\,\tau\,}{2}\right)^2.
\]
Then we have
\begin{equation}\label{eq:mlmlsprime}
 \intfL{m(\lambda), m(\lambda\sprime)} =\frac{1}{\,2\,}
 \left(\,
(\lambdaplus - \lambdaplus\sprime )^2
+
n(\lambdaminus)
+
n(\lambdaminus\sprime)
\;-\;4 
\,\right).
\end{equation}
In particular, we have 
\[
\intfL{m(\lambda), m(\lambda)}=n(\lambdaminus)-2.
\]
We also put 
\[
\renewcommand{\arraystretch}{1.3}
\begin{array}{lll}
\tilwalls(D_0) &:= & \set{\lambda\in \Leech}{\textrm{$(\LeechRoot{\lambda})\sperp \cap D_0$ is a wall of $D_0$}}, \\
\MMM(D_0) &:=& \set{\lambda\in \Leech}{\intfL{m(\lambda), m(\lambda)}<0}=\set{\lambda\in \Leech}{n(\lambdaminus)<2}.
\end{array}
\]
Then we have 
\begin{equation}\label{eq:tilWWWMMM}
\tilwalls(D_0)\subset \MMM(D_0).
\end{equation}
Note that the stabilizer $\Cen(\dotinf, \gamma)$ of the Conway chamber $\ConCham(\weyl_0)$
acts on 
$\tilwalls(D_0)$ and $\MMM(D_0)$.
Since the restriction homomorphism $\tilg\mapsto \tilg|_{\Lplus}$ maps $\Cen(\dotinf, \gamma)$ to $\Stab_{\vGam}(D_0)$, 
it is enough to show that 
 the number of orbits of the action of $\Cen(\dotinf, \gamma)$ on $ \MMM(D_0)$
is finite.
\par
We put
\begin{equation}\label{eq:Xi}
\Xitau:=\bigset{v\in \Leechminus\dual }{n(v)<2},
\end{equation}
which is a finite subset of $\Leechminus\dual$.
Since $\prLeechminus\colon \Leech \to \Leechminus\dual$ is surjective,
the projection $\MMM(D_0) \to \Xitau$ is also surjective.
Moreover, by translation,
the group $\Leechplus$ acts 
on each fiber of $\MMM(D_0) \to \Xitau$ 
transitively and freely.
Therefore we have 
\[
\MMM(D_0)/\Leechplus \;\cong \; \Xitau. 
\]
Recall from~\eqref{eq:exactCen} that
we have an exact sequence
\begin{equation}\label{eq:exactCenagain}
0\;\;\longrightarrow\;\;\Leechplus\;\; \longrightarrow \;\; 
\Cen(\dotinf, \gamma)
\;\; \longrightarrow \;\; 
\Sigma_{\vep}([\tau])
\;\; \longrightarrow \;\; 1,
\end{equation}
where $\Sigma_{\vep}([\tau])$ is the subgroup of $\Cen(\dotz, \vep)$
consisting of $h\in \Cen(\dotz, \vep)$ such that 
there exists an element $\sigma\in \Leech$ such that $\tau-\tau^h=\sigma-\sigma^{\vep}$.
Therefore the action 
of $\Cen(\dotinf, \gamma)$ on $\MMM(D_0)$ induces an action of 
$\Sigma_{\vep}([\tau])$ on $\Xitau$. 
This induced action is described easily by introducing 
\begin{equation}\label{eq:tilXi}
\tilXitau:=\Xitau -\dfrac{\tau}{\,2\,}
=\bigset{u\in \Leechminus\dual\tensor\QQ}{\;u^2<2, \;\; u+\dfrac{\tau}{\,2\,}\in \Leechminus\dual}.
\end{equation}
We show that, via the natural action of $\Cen(\dotz, \vep)$ on $\Leechminus$,
the subgroup 
$\Sigma_{\vep}([\tau])$ of $\Cen(\dotz, \vep)$
preserves the subset $\tilXitau$ of $\Leechminus\dual\tensor\QQ$,
and that 
the induced action of 
$\Sigma_{\vep}([\tau])$ on $\Xitau$
is equal to the natural action of $\Sigma_{\vep}([\tau])$ on $\tilXitau$
via the 
bijection $\tilXitau\cong \Xitau$ given by 
$x\mapsto x+\tau/2$.
Suppose that $h\in \Sigma_{\vep}([\tau])$ and $v\in \Xitau$.
We choose a vector $\sigma\in \Leech$ such that $\tau-\tau^h=\sigma-\sigma^\vep=2\sigma_{-}$.
Then $(h, \sigma)\in \Cen(\dotinf, \gamma)$ is a lift of $h$ in~\eqref{eq:exactCenagain}.
We also choose a lift $\lambda\in \Leech$ of $v$,
that is, we have $v=\lambda\sb{-}$.
Then the induced action of $h$ on $\Xitau$ maps $v$ to $(\lambda^h+\sigma)\sb{-}$.
This action is compatible with the natural action $x\mapsto x^h$ on $\tilXitau$, because we have 
\[
\left(v-\dfrac{\tau}{\,2\,}\right)^h=(\lambda^h+\sigma)_{-}-\dfrac{\tau}{\,2\,}\;\;\in\;\;\tilXitau.
\]
Therefore we can explicitly calculate  the set 
\begin{equation}\label{eq:orbitss}
 \MMM(D_0)/\Cen(\dotinf, \gamma)= \MMM(D_0)/\Leechplus/\Sigma_{g}([\tau])\;\;\cong\;\; \Xitau/\Sigma_{g}([\tau]). 
\end{equation}
Since $\Xitau$ is finite, 
we see that 
$\tilwalls(D_0)/\Cen(\dotinf, \gamma)$ is finite. 
Hence  $\walls(D_0)/\Stab_{\vGam}(D_0)$ is also finite. 
\end{proof}
%
%
We consider the problems when 
the equality $\MMM(D_0)=\tilwalls (D_0)$ holds and 
what are the fibers of the natural mapping $\tilwalls (D_0)\to \walls(D_0)$.
\begin{lemma}\label{lem:mmequal}
For $\lambda, \lambda\sprime\in \MMM(D_0)$, we have 
\[
\renewcommand{\arraystretch}{1.2}
\begin{array}{cl}
&(m(\lambda))\sperp=(m(\lambda\sprime))\sperp\\
\Longleftrightarrow & m(\lambda)=m(\lambda\sprime) \\
\Longleftrightarrow & \lambdaplus=\lambdaplus\sprime\;\;\textrm{and}\;\; 
n(\lambdaminus)=n(\lambdaminus\sprime).
\end{array}
\]
\end{lemma}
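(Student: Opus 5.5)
We prove the chain of equivalences by establishing the two implications separately, using the explicit formula \eqref{eq:defmlambda} for $m(\lambda)$.

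\emph{Second equivalence.} Write $m(\lambda)=(a(\lambda),1,\lambdaplus+\tau/2)$, where
\[
a(\lambda)=\frac{\lambdaplus^2}{2}+\frac{1}{2}\,n(\lambdaminus)+\frac{\tau^2}{8}-1.
\]
Here we use $\bigl(\lambdaminus-\tau/2\bigr)^2=n(\lambdaminus)$. In the decomposition $\Lts\tensor\QQ=(U\tensor\QQ)\oplus(\Leech(-1)\tensor\QQ)$ the coordinates are unique. Hence $m(\lambda)=m(\lambda')$ holds if and only if $\lambdaplus+\tau/2=\lambdaplus'+\tau/2$ and $a(\lambda)=a(\lambda')$. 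The first condition is $\lambdaplus=\lambdaplus'$. Given it, the second condition is equivalent to $n(\lambdaminus)=n(\lambdaminus')$. This equivalence is purely formal.

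\emph{First equivalence.} The implication ``$\Leftarrow$'' is trivial. For ``$\Rightarrow$'', suppose $(m(\lambda))\sperp=(m(\lambda'))\sperp$ as hyperplanes in $\PPPplus$. Since $\lambda,\lambda'\in\MMM(D_0)$, both vectors have negative norm, so each hyperplane actually meets $\PPPplus$. A hyperplane meeting the open cone $\PPPplus$ spans a real hyperplane of $\Lplus\tensor\RR$, whose orthogonal complement is a line. Therefore $m(\lambda')=c\,m(\lambda)$ for some nonzero real $c$.

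Both vectors have $\weyl_1$-coefficient equal to $1$. Equivalently, $\intfL{m(\lambda),\weyl_0}=\intfL{m(\lambda'),\weyl_0}=1$, which uses $\weyl_0\in\Lplus$. Comparing these coefficients gives $c=1$, so $m(\lambda)=m(\lambda')$.

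\emph{Main obstacle.} The one step that needs care is justifying that equal hyperplanes force proportional normal vectors. This requires that each $(m(\lambda))\sperp$ is a genuine codimension-one subset of $\PPPplus$, not empty. That holds because $\intfL{m(\lambda),m(\lambda)}=n(\lambdaminus)-2<0$, so the orthogonal complement of $m(\lambda)$ in $\Lplus\tensor\RR$ has signature $(1,\ast)$ and meets the positive cone in an open subset of that complement. With this in place, the rest is coordinate comparison.
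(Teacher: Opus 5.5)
Your proof is correct and follows the same route as the paper, whose entire proof is that the equivalences follow from the explicit formula \eqref{eq:defmlambda}. You spell out what the paper leaves implicit: since $\intf{m(\lambda),m(\lambda)}<0$, the set $(m(\lambda))\sperp$ is a nonempty open subset of the real hyperplane $m(\lambda)^{\perp}\subset \Lplus\tensor\RR$, so equal hyperplanes force proportional normal vectors, and the common $\weyl_1$-coefficient $1$ forces the scalar to be $1$.
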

\begin{proof}
These follow from the definition~\eqref{eq:defmlambda}.
\end{proof}
First we recall elementary results in hyperbolic geometry.
Let $L$ be a hyperbolic lattice, and $\PPP$ a positive cone of $L$.
For a vector $v\in L\tensor\RR$ with $\intf{v, v}<0$,
let $(v)\sperp$ denote the hyperplane of $\PPP$ defined by $\intf{x, v}=0$.
Let $v\in L\tensor\RR$ be a vector with $\intf{v, v}<0$, and 
$p$ a point of $\PPP$.
The \emph{foot of the perpendicular from $p$ to the hyperplane $(v)\sperp$ }
is given by
\[
q:=p-\frac{\intf{p, v}}{\intf{v,v}}\, v.
\]
Then we have $\intf{p, q}>0$, and 
by the Lorentzian Cauchy--Schwarz inequality,
we have $\intf{q, q}>0$.
Therefore we have 
$q\in \PPP$.
Let $v\sprime\in L\tensor\RR$ be another vector with $\intf{v\sprime, v\sprime}<0$.
We assume that $\intf{p, v}>0$ and $\intf{p, v\sprime}>0$.
Then we have
\begin{equation}\label{eq:ifvvsprime}
\intf{v, v\sprime}\ge 0 \;\;\Longrightarrow \;\; \intf{q, v\sprime}=
\intf{p, v\sprime}-\dfrac{\intf{p, v}}{\intf{v,v}} \intf{v, v\sprime}>0,
\end{equation}
that is, if $\intf{v, v\sprime}\ge 0$, 
then the foot $q\in (v)\sperp$ is located in the same half-space as $p$ with respect to the hyperplane 
$(v\sprime)\sperp$.
\begin{lemma}\label{lem:pairsofMMM}
Suppose that we have
$\intfL{m(\lambda), m(\lambda\sprime)}\ge 0$
for every  pair $\lambda, \lambda\sprime\in  \MMM(D_0)$
satisfying  $m(\lambda)\ne m(\lambda\sprime)$.
Then we have 
$\MMM(D_0)=\tilwalls (D_0)$.
\end{lemma}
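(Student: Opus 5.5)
Fix $\lambda\in\MMM(D_0)$; we must show that $(m(\lambda))\sperp\cap D_0$ contains a non-empty open subset of $(m(\lambda))\sperp$. The plan is to use the foot-of-perpendicular argument recalled just before the statement. Take the interior point $p$ of $D_0$ from the proof of Proposition~\ref{prop:D0} (with $t>2$). There we showed $\intfL{\LeechRoot{\mu}, p}\ge t-2>0$ for every $\mu\in\Leech$. Since $p\in\Lplus\tensor\RR$, we have $\intfL{m(\mu),p}=\intfL{\LeechRoot{\mu},p}>0$ for all $\mu$.

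Moreover, $D_0$ is cut out in $\PPPplus$ by the inequalities $\intfL{x, m(\mu)}\ge 0$ for $\mu\in\Leech$. Those $\mu$ with $\intfL{m(\mu),m(\mu)}\ge 0$ give inequalities that hold automatically on $\PPPplus$: the vector $m(\mu)$ lies in the closure of $\PPPplus$, because it pairs positively with $p$. Hence
\[
D_0=\set{x\in\PPPplus}{\intfL{x,m(\mu)}\ge0 \;\textrm{for all}\; \mu\in\MMM(D_0)}.
\]

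Set $v:=m(\lambda)$ and let $q:=p-\bigl(\intfL{p,v}/\intfL{v,v}\bigr)\,v$ be the foot of the perpendicular. Then $q\in(v)\sperp\subset\PPPplus$. Now take any $\mu\in\MMM(D_0)$ with $m(\mu)\ne v$. By hypothesis $\intfL{v,m(\mu)}\ge0$, so \eqref{eq:ifvvsprime} gives $\intfL{q,m(\mu)}>0$. If instead $m(\mu)=v$, then the hyperplane is $(v)\sperp$ itself, which contains $q$.

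The main step is to pass from this strict pointwise inequality to an open neighbourhood of $q$ in $(v)\sperp$. We invoke local finiteness of $\FFFplus$: only finitely many hyperplanes $(m(\mu))\sperp$ meet a small compact neighbourhood $B$ of $q$ in $\PPPplus$. Shrinking $B$, we may assume every hyperplane other than $(v)\sperp$ misses $B$. For each $m(\mu)\ne v$, the linear form $\intfL{\,\cdot\,,m(\mu)}$ is positive at $q$ and has no zero on the connected set $B$, so it is positive on all of $B$.

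Therefore $B\cap(v)\sperp\subset D_0$, and this is a non-empty open subset of $(v)\sperp$. So $(\LeechRoot{\lambda})\sperp\cap D_0=(m(\lambda))\sperp\cap D_0$ is a wall of $D_0$, using $\PPPplus\cap(\LeechRoot{\lambda})\sperp=(\prplus(\LeechRoot{\lambda}))\sperp$. This gives $\lambda\in\tilwalls(D_0)$. Together with \eqref{eq:tilWWWMMM} we obtain equality.
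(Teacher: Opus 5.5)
Your proof is correct and follows the paper's argument: take the foot $q$ of the perpendicular from an interior point $p$ of $D_0$ to $(m(\lambda))\sperp$, and apply \eqref{eq:ifvvsprime} to every $m(\lambda\sprime)\neq m(\lambda)$. You also make explicit two points the paper leaves implicit: why $D_0$ is cut out by the inequalities indexed by $\MMM(D_0)$, and why strict positivity at the single point $q$ gives a non-empty open subset of the hyperplane (via local finiteness of $\FFFplus$).
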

\begin{proof}
Since $\MMM(D_0)\supset \tilwalls (D_0)$,
 we have 
 \[
 D_0=\set{x\in \PPPplus}{\intfL{x, m(\lambda)}\ge 0\;\; \textrm{for any} \;\; \lambda\in \MMM(D_0)}.
\]
Hence a vector $\lambda\in \MMM(D_0)$ belongs to $\tilwalls (D_0)$
if and only if 
$(m(\lambda))\sperp \cap D_0$ contains a non-empty open subset of $(m(\lambda))\sperp $,
or equivalently,
if and only if 
there exists a point $q\in (m(\lambda))\sperp$
such that $\intfL{q, m(\lambda\sprime)}> 0$ holds for any $\lambda\sprime\in \MMM(D_0)$
with $m(\lambda)\ne m(\lambda\sprime)$.
Considering the foot $q$ of 
the perpendicular from an interior point $p$ of $D_0$
to the hyperplane $(m(\lambda))\sperp$,
we obtain the proof from~\eqref{eq:ifvvsprime}.
\end{proof}
For a pair $\xi, \xi\sprime\in \Xitau$,
we put
\begin{equation}\label{eq:diffellset}
\Delta(\xi, \xi\sprime):=
\set{l\in \Leechplus\dual}{l+\xi-\xi\sprime\in \Leech},
\end{equation}
on which $\Leechplus= \Leechplus\dual\cap\Leech$ acts freely and transitively.
We then put 
\begin{equation}\label{eq:diffell}
\ell(\xi, \xi\sprime):=\min\set{l^2}{l\in \Delta(\xi, \xi\sprime)}.
\end{equation}
Then we obtain the following:
\begin{lemma}\label{lem:MMMtilwalls}
Suppose that, for any pair $\xi, \xi\sprime\in \Xitau$,
we have
\begin{equation}\label{eq:nnell}
(\;n(\xi)\ne n(\xi\sprime) \; \textrm{\rm or}\; \;\ell(\xi, \xi\sprime)\ne 0 \;)\;\;\Longrightarrow\;\; \ell(\xi, \xi\sprime)+n(\xi)+n(\xi\sprime)\ge 4.
\end{equation}
Then $\MMM(D_0)=\tilwalls (D_0)$ holds.
\end{lemma}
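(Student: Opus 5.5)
We reduce the statement to Lemma~\ref{lem:pairsofMMM}: it suffices to show that $\intfL{m(\lambda), m(\lambda\sprime)}\ge 0$ for every pair $\lambda, \lambda\sprime\in \MMM(D_0)$ with $m(\lambda)\ne m(\lambda\sprime)$. By~\eqref{eq:mlmlsprime} this inequality is equivalent to
\[
(\lambdaplus-\lambdaplus\sprime)^2+n(\lambdaminus)+n(\lambdaminus\sprime)\ge 4 .
\]
So the plan is to translate the hypothesis~\eqref{eq:nnell}, stated in terms of $\xi:=\lambdaminus$ and $\xi\sprime:=\lambdaminus\sprime$ in $\Xitau$, into exactly this inequality.

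Fix $\lambda, \lambda\sprime\in \MMM(D_0)$ and put $\xi=\lambdaminus$, $\xi\sprime=\lambdaminus\sprime$. These lie in $\Xitau$ by the definition of $\MMM(D_0)$. Set $l:=\lambdaplus-\lambdaplus\sprime\in \Leechplus\dual$. Since $\lambda=\lambdaplus+\lambdaminus$ and similarly for $\lambda\sprime$, we have $l+\xi-\xi\sprime=\lambda-\lambda\sprime\in\Leech$. Hence $l\in \Delta(\xi,\xi\sprime)$, and by~\eqref{eq:diffell} we get $l^2\ge \ell(\xi,\xi\sprime)$.

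Next we use Lemma~\ref{lem:mmequal}: the condition $m(\lambda)\ne m(\lambda\sprime)$ means that $l\ne 0$ or $n(\xi)\ne n(\xi\sprime)$. We split into two cases.

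\emph{Case $n(\xi)\ne n(\xi\sprime)$ or $\ell(\xi,\xi\sprime)\ne 0$.} Hypothesis~\eqref{eq:nnell} gives
\[
l^2+n(\xi)+n(\xi\sprime)\ \ge\ \ell(\xi,\xi\sprime)+n(\xi)+n(\xi\sprime)\ \ge\ 4 .
\]

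\emph{Case $n(\xi)=n(\xi\sprime)$ and $\ell(\xi,\xi\sprime)=0$.} Here $l\ne 0$ is forced. The hypothesis~\eqref{eq:nnell} says nothing directly, so this is the main obstacle: we must bound $l^2$ from below by $4-2n(\xi)$. Since $\ell(\xi,\xi\sprime)=0$, we have $0\in\Delta(\xi,\xi\sprime)$, so $\xi-\xi\sprime\in\Leech$. Then $l=(\lambda-\lambda\sprime)-(\xi-\xi\sprime)\in\Leech\cap\Leechplus\dual=\Leechplus$. A nonzero vector of $\Leechplus\subset\Leech$ has $l^2\ge 4$, because the Leech lattice has no vectors of norm $2$. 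Since $n\ge 0$, this gives $l^2+n(\xi)+n(\xi\sprime)\ge 4$.

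In both cases $\intfL{m(\lambda),m(\lambda\sprime)}\ge 0$, so Lemma~\ref{lem:pairsofMMM} yields $\MMM(D_0)=\tilwalls(D_0)$.
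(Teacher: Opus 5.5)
Your proof is correct and is essentially the paper's argument. The paper phrases it by contradiction: from Lemma~\ref{lem:pairsofMMM} it picks $\lambda,\lambda\sprime$ with $m(\lambda)\ne m(\lambda\sprime)$ and $\intfL{m(\lambda),m(\lambda\sprime)}<0$, takes $l=\lambdaplus-\lambdaplus\sprime\in\Delta(\xi,\xi\sprime)$, and in the case $n(\xi)=n(\xi\sprime)$, $\ell(\xi,\xi\sprime)=0$ uses Lemma~\ref{lem:mmequal} and the fact that nonzero Leech vectors have norm at least $4$. Your direct case split uses exactly these ingredients.
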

\begin{proof}
Assume that $\MMM(D_0)\supsetneqq\tilwalls (D_0)$.
By Lemma~\ref{lem:pairsofMMM},
there exists a pair $\lambda, \lambda\sprime$ of elements of $ \MMM(D_0)$
such that $m(\lambda)\ne m(\lambda\sprime)$ and $ \intfL{m(\lambda), m(\lambda\sprime)} <0$.
We will show that the negation of~\eqref{eq:nnell} holds for
$\xi:=\lambdaminus$ and $\xi\sprime:=\lambdaminus\sprime$.
We put $l:=\lambdaplus-\lambdaplus\sprime\in \Leechplus\dual$,
which is an element of $\Delta(\xi, \xi\sprime)$,
and hence $\ell(\xi, \xi\sprime)\le l^2$.
By~\eqref{eq:mlmlsprime},
the assumption $ \intfL{m(\lambda), m(\lambda\sprime)} <0$ implies 
\begin{equation*}\label{eq:lnn}
l^2+n(\xi)+n(\xi\sprime)<4.
\end{equation*}
Thus the right-hand side of $\Longrightarrow$ in~\eqref{eq:nnell} is false.
Moreover, we have $l^2<4$.
Suppose that $n(\xi)=n(\xi\sprime)$.
We show that $\ell(\xi, \xi\sprime)\ne 0$.
By Lemma~\ref{lem:mmequal},
the assumption $m(\lambda)\ne m(\lambda\sprime)$ implies $l\ne 0$.
If $\ell(\xi, \xi\sprime)= 0$, then 
$\xi-\xi\sprime\in \Leech$ holds, and hence 
$ \Delta(\xi, \xi\sprime)\subset  \Leech$.
Then $l\in\Delta(\xi, \xi\sprime)$ and $l\ne 0$ 
imply $l^2\ge 4$, a contradiction.
Thus  the left-hand side of $\Longrightarrow$ in~\eqref{eq:nnell} is true.
\end{proof}
%
%
\begin{remark}\label{rem:MMMtilwalls}
Since $\Xitau$ is finite,
checking the condition~\eqref{eq:nnell} for all pairs $\xi, \xi\sprime$
 can be carried out by a brute-force method.
 More precisely, 
choosing an element  $\delta\in \Delta(\xi, \xi\sprime)$
and fixing a bijection $\Delta(\xi, \xi\sprime)\cong \Leechplus$ by $x\mapsto x-\delta$,
the value  $\ell(\xi, \xi\sprime)$ is calculated as the minimum of
an inhomogeneous quadratic form on the free $\ZZ$-module $\Leechplus$.
\end{remark}
We write the natural mapping
$\tilwalls(D_0)\to \walls(D_0)$
by $\mu$, that is, 
\[
\mu(\lambda):=(m(\lambda))\sperp\cap D_0
\]
for $\lambda\in \tilwalls(D_0)$.
Note that the fiber of $\mu$ over a wall $w$
of $D_0$ is equal to the set of walls $(\LeechRoot{\lambda})\sperp\cap \ConChamz$
of $\ConChamz$ whose intersection with $\PPPplus$ is $w$.
By Lemma~\ref{lem:mmequal}, we have the following:
\begin{lemma}\label{lem:wallfiber}
Suppose that $\MMM(D_0)=\tilwalls (D_0)$ holds.
Then the fiber $\mu\inv(\mu(\lambda))$ 
of 
$\mu\colon \tilwalls(D_0)\to \walls(D_0)$
over a wall $(m(\lambda))\sperp\cap D_0$ of $D_0$
is equal to
\[
\set{\lambda\sprime\in \Leech}%
{\lambdaplus\sprime=\lambdaplus, \;\; \lambdaminus\sprime\in \Xitau, \;\; n(\lambdaminus\sprime)=n(\lambdaminus)},
\]
which is in one-to-one correspondence with the subset 
\[
\set{\xi\sprime\in \Xitau}{\xi\sprime-\lambdaminus\in \Leech, \;\; n(\xi\sprime)=n(\lambdaminus)}.
\]
of $\Xitau$ via $\lambda\sprime \mapsto \xi\sprime=\lambdaminus\sprime$.
\qed
\end{lemma}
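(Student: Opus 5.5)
Under the hypothesis $\MMM(D_0)=\tilwalls(D_0)$, I would describe the fiber by reducing everything to Lemma~\ref{lem:mmequal}. Fix $\lambda\in\tilwalls(D_0)$ and let $w:=\mu(\lambda)=(m(\lambda))\sperp\cap D_0$. An element $\lambda\sprime\in\tilwalls(D_0)$ satisfies $\mu(\lambda\sprime)=w$ if and only if $(m(\lambda\sprime))\sperp\cap D_0=(m(\lambda))\sperp\cap D_0$.

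The first step is to upgrade equality of walls to equality of hyperplanes. A wall $w$ contains a non-empty open subset of its supporting hyperplane, so its linear span is that hyperplane. Hence $(m(\lambda\sprime))\sperp=(m(\lambda))\sperp$ as hyperplanes in $\PPPplus$.

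The second step is to translate this into coordinates. Both $\lambda,\lambda\sprime$ lie in $\MMM(D_0)$, so Lemma~\ref{lem:mmequal} applies. The equality of hyperplanes is equivalent to $\lambdaplus\sprime=\lambdaplus$ together with $n(\lambdaminus\sprime)=n(\lambdaminus)$. The condition $\lambdaminus\sprime\in\Xitau$ is exactly the condition $\lambda\sprime\in\MMM(D_0)$, which equals $\tilwalls(D_0)$ by hypothesis. Conversely, any $\lambda\sprime\in\Leech$ satisfying the three displayed conditions lies in $\MMM(D_0)=\tilwalls(D_0)$ and has $m(\lambda\sprime)=m(\lambda)$, so $\mu(\lambda\sprime)=w$. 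This gives the first description of the fiber.

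The third step is the bijection $\lambda\sprime\mapsto\xi\sprime:=\lambdaminus\sprime$.
\begin{itemize}
\item \emph{Injectivity.} Since $\lambdaplus\sprime=\lambdaplus$ is fixed, we have $\lambda\sprime=\lambdaplus+\lambdaminus\sprime$, so $\lambda\sprime$ is determined by $\xi\sprime$.
\item \emph{Image lies in the target set.} We have $\xi\sprime-\lambdaminus=\lambda\sprime-\lambda\in\Leech$.
\item \emph{Surjectivity.} Given $\xi\sprime\in\Xitau$ with $\xi\sprime-\lambdaminus\in\Leech$ and $n(\xi\sprime)=n(\lambdaminus)$, set $\lambda\sprime:=\lambda+(\xi\sprime-\lambdaminus)\in\Leech$. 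Because $\xi\sprime-\lambdaminus$ lies in $\Leechminus\tensor\QQ$, its projections are $\lambdaplus\sprime=\lambdaplus$ and $\lambdaminus\sprime=\xi\sprime$.
\end{itemize}

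The only point needing any care is the first step, deducing equality of hyperplanes from equality of walls. Everything else is bookkeeping with the orthogonal projections.
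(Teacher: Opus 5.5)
Your proof is correct and takes the same route as the paper. The paper simply cites Lemma~\ref{lem:mmequal}, and you supply the details it leaves out: a wall spans its hyperplane, $\lambda'_{-}\in\Xitau$ is equivalent to $\lambda'\in\MMM(D_0)$, and the decomposition $\lambda'=\lambda'_{+}+\lambda'_{-}$ gives the bijection.
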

In the next three sections, we apply these lemmas to the cases $\gamma=(\vep_8, 0), (\vep_{12}, 0)$,
and $\gamma=(\vep_{16}, \tau)$ for $\tau=0,\tauI,\tauII$.
More precisely,
we carry out the following computations:
\begin{itemize}
\item 
We calculate the index $[\OG(\Lplus, \PPPplus): \vGam]$ 
by Proposition~\ref{prop:vGamindex},
where $\vGam$ is the group defined in~\eqref{eq:vGamdef}.
 For this purpose, 
we compute the index of 
the image of $\etaminus\colon \OG(\Lminus)\to \OG(\qminus)$, and 
show that 
$\etaplus\colon \OG(\Lplus, \PPPplus)\to \OG(\qplus)$
is surjective.
Since $\Lminus$ is negative-definite,
we can use the method described in Section~\ref{subsec:computational}
for the computation of the index of  $\Image \etaminus$.
For the surjectivity of $\etaplus$, 
the general theory in~\cite[Chapter VIII]{MirandaMorrison2009} can be applied.
However,  we instead give simpler arguments for each case.
\item 
We compute the finite set $\Xitau$
defined in~\eqref{eq:tilXi}.
%
%
We then verify that $\MMM(D_0)=\tilwalls (D_0)$ holds
by Lemma~\ref{lem:MMMtilwalls}.
See Remark~\ref{rem:MMMtilwalls}.
Next we 
calculate a finite generating set of 
the subgroup $\Sigma_{\vep}([\tau])$ of $\Cen(\dotz, \vep)$,
and 
compute the orbit decomposition of $\Xi_{\tau}$ under the action of $\Sigma_{\vep}([\tau])$.
Thus we obtain 
the $\Stab_{\vGam}(D_0)$-orbits in $\walls(D_0)$.
\item
For each $\Stab_{\vGam}(D_0)$-orbit in $\walls(D_0)$,
we compute by Lemma~\ref{lem:wallfiber} the fiber $\mu\inv(\mu(\lambda))$ of $\mu\colon \tilwalls(D_0)\to \walls(D_0)$
over a representative wall $\mu(\lambda)$ of the orbit.
\end{itemize}
The results are summarized in Tables~\ref{table:Lpmqpm} and~\ref{table:Xis}.
%
\begin{table}
\[
\renewcommand{\arraystretch}{1.2}
\setlength{\arraycolsep}{10pt}
\begin{array}{cccr}
\gamma & \Lplus &\Lminus& |\OG(\Lminus)| \\
\hline 
(\vep_{8}, 0) &\;\; U\oplus \Lamsxtn(-1) &E_8(-2) & 696729600 \\
(\vep_{12}, 0) &\;\; U\oplus \Sigmatwlv(-2) &\Sigmatwlv(-2) &  980995276800\\
(\vep_{16}, 0) &\;\; U\oplus E_8(-2) &\Lamsxtn(-1) & 89181388800 \\
(\vep_{16},  \tauI) & \typeILat_{1, 9} (2) & \textrm{See Proposition~\ref{prop:involL}}& 743178240 \\
(\vep_{16}, \tauII) & \typeIILat_{1, 9} (2) & \textrm{See Proposition~\ref{prop:involL}} & 21139292160 
\end{array}
\]
\vskip .5mm
\[
\renewcommand{\arraystretch}{1.2}
\setlength{\arraycolsep}{10pt}
\begin{array}{ccrr}
\gamma & \qplus\cong \qminus & |\OG(\qplus)| = |\OG(\qminus)| & \textrm{index of $\Image \etaminus$}\\
\hline 
(\vep_{8}, 0) & u^{\oplus 4} & 348364800 & 1\\
(\vep_{12}, 0) & a^{\oplus 12} &51231497335603200  &104448 \\
(\vep_{16}, 0) & u^{\oplus 4} & 348364800 &2 \\
(\vep_{16},  \tauI) & (a\oplus b)^{\oplus 5} &89181388800 &240 \\
( \tauII )& u^{\oplus 5} &46998591897600 &71145
\end{array}
\]
\caption{Discriminant form and sizes of groups}\label{table:Lpmqpm}
\end{table}
\begin{table}
\[
\setlength{\arraycolsep}{10pt}
\renewcommand{\arraystretch}{1.4}
\begin{array}{crrcc}
\gamma &\textrm{index of $\vGam$} &\textrm{orbits in $\Xitau$} & \xi^2\textrm{\;\,for\;} \xi\in \tilXitau&\mu\inv(\mu(\lambda))\\
\hline 
(\vep_8, 0) &1 & 241= 1 & 0 & f_0 \\
  & & +240 & 1 & F_0 \\
\hline 
(\vep_{12}, 0) &104448 & 2313= 1 & 0 & f_0 \\
  & & +264 & 1 & F_0 \\
  & & +2048 & 3/2 & F_1 \\
 \hline 
(\vep_{16}, 0) &2& 1 & 0 & f_0 \\
\hline 
(\vep_{16},\tauI) &240& 512 & 3/2 & F_1 \\
\hline 
(\vep_{16},\tauII) &71145 & 32 & 1 & F_0 \\  
\end{array}
\]
\vskip 3mm
\parbox{12cm}{
Explanation of the column labelled   $\mu\inv(\mu(\lambda))$: \\
$f_0$ means that $\mu\inv(\mu(\lambda))$ is a singleton $\{\lambda\}$,
\\
$ F_0$ means that $\mu\inv(\mu(\lambda))=\{\lambda, \lambda\sprime\}$ with $\intf{r(\lambda), r(\lambda\sprime)}=0$, 
\\
$ F_1$ means that $\mu\inv(\mu(\lambda))=\{\lambda, \lambda\sprime\}$ with $\intf{r(\lambda), r(\lambda\sprime)}=1$.
}
\vskip 5mm
\caption{Index $[\OG(\Lplus, \PPPplus): \vGam]$ and orbit decomposition of walls}\label{table:Xis}
\end{table}
\subsection{Involution $\gamma=(\vep_8, 0)$}\label{subsec:walls8}
In this case, we have
\[
\begin{array}{ll}
\Leechplus\cong \Lamsxtn, \quad &\Leechminus\cong \Laminated_{8}\cong E_8(2),\\
\Lplus\cong U\oplus \Lamsxtn(-1), \quad &\Lminus\cong E_8(-2),
\end{array}
\]
and the discriminant forms $\qplus\cong \qminus$ of these lattices are 
isomorphic to $q_8=u^{\oplus 4}$.
We have proved the surjectivity of $\etaminus$ in Section~\ref{subsec:vep8}.
Hence, by~\eqref{eq:groupfibprodP}, we obtain 
\[
[\OG(\Lplus, \PPPplus): \vGam]=1.
\]
%
%
Since $\tau=0$,
we have $\Xitau=\tilXitau $ and $\Sigma_{\vep}([\tau])=\Cen(\dotz, \vep_{8})$.
Since $E_8$ is unimodular, we have $\Leechminus\dual=(1/2)E_8(2)$.
Consequently, we obtain 
\[
\Xitau\;=\;\tilXitau \; =\; \{0\}\;\cup\;\set{\xi\in \Leechminus\dual}{\xi^2=1},
\]
which consists of $1+240$ vectors.
We prove~\eqref{eq:nnell} holds for all pairs $\xi, \xi\sprime\in \Xitau$
by the method in Remark~\ref{rem:MMMtilwalls}.
Hence 
$\MMM(D_0)=\tilwalls(D_0)$ holds.
%
%
%
The action of $\Sigma_{\vep}([\tau])$ on $\Xitau$
is given by the projection 
$\Cen(\dotz, \vep_{8})\to \OG(\Leechminus)$.
By the diagram~\eqref{eq:groupfibprodCenvep}, and results in Section~\ref{subsec:vep8}
(in particular, the fact~\eqref{eq:Imageeta16}), 
the image of the projection $\Cen(\dotz, \vep_{8})\to \OG(\Leechminus)$ is 
the pullback of $\OG^{+}_8(2)\subset \OG(q_8)$ by the natural homomorphism 
$\eta_8\colon W(E_8)\to \OG(q_8)$.
Thus we have 
\[
\Image(\Cen(\dotz, \vep_{8})\to \OG(\Leechminus) )=\set{g\in W(E_8)}{\det g=1}=W(E_8)\sprime,
\]
where $W(E_8)\sprime$ is the derived group of $W(E_8)$.
The action of $W(E_8)\sprime$ on $\Xitau$ has exactly two orbits:
one is 
$\{0\}$ and the other is $\set{\xi\in \Leechminus\dual}{\xi^2=1}$.
Hence the action of $\Stab_{\vGam}(D_0)$ on $\walls(D_0)$ 
has also two orbits.
\par
The orbit $o_1$ corresponding to $\{0\}$
consists of walls $\mu(\lambda)$,
where $\lambda$ runs through $\Leechplus$,
and the fiber of $\mu\colon \tilwalls(D_0)\to\walls(D_0)$ 
over $\mu(\lambda)\in o_1$ is a singleton $\{\lambda\}$.
The orbit $o_{240}$ corresponding to $\set{\xi\in \Leechminus\dual}{\xi^2=1}$
consists of walls $\mu(\lambda)$,
where $\lambda\in \Leech$ runs through
vectors of $\Leech$ satisfying $\lambdaminus^2=1$.
The fiber of $\mu\colon \tilwalls(D_0)\to\walls(D_0)$ 
over $\mu(\lambda)\in o_{240}$ consists 
of two vectors
$\lambda=\lambdaplus+\lambdaminus$ and $\lambda\sprime=\lambdaplus-\lambdaminus$.
The corresponding pair of Leech roots 
$\LeechRoot{\lambda}$ and $\LeechRoot{\lambda\sprime}$ satisfies
$\intfL{\LeechRoot{\lambda}, \LeechRoot{\lambda\sprime}}=0$.
%
%
\subsection{Involution $\gamma=(\vep_{12}, 0)$}\label{subsec:walls12}
%
%
In this case, we have
\[
\begin{array}{ll}
\Leechplus\cong \Sigmatwlv(2), \quad &\Leechminus\cong \Sigmatwlv(2),\\
\Lplus\cong U\oplus \Sigmatwlv(-2), \quad &\Lminus\cong \Sigmatwlv(-2), 
\end{array}
\]
and their discriminant forms $\qplus\cong \qminus$ are all isomorphic to
$q_{12}:=a^{\oplus 12}$.
%
%
%
%
%
%
\begin{proposition}
The natural homomorphism 
$\etaplus\colon \OG(\Lplus, \PPPplus) \to \OG(\qplus)$ is surjective.
Hence the index 
$[\OG(\Lplus, \PPPplus): \vGam]$ is equal to 
$104448$.
\end{proposition}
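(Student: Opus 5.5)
The plan is to prove surjectivity of $\etaplus$ by producing enough explicit isometries of $\Lplus\cong U\oplus\Sigmatwlv(-2)$ and computing their images in $\OG(q_{12})$, $q_{12}=a^{\oplus 12}$. The index then follows from Proposition~\ref{prop:vGamindex} together with the entry $104448$ for the index of $\Image\etaminus$ in Table~\ref{table:Lpmqpm}. That index is the quotient $|\OG(q_{12})|/|\Image\etaminus|$. By Remark~\ref{rem:q12}, the kernel of $\OG(\Sigmatwlv)\to\OG(q_{12})$ is $\{\pm\id\}$. By Corollary~\ref{cor:OGSigma}, $|\OG(\Sigmatwlv)|=2^{11}\cdot 12!=980995276800$. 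Hence $|\Image\etaminus|=2^{10}\cdot 12!$, and dividing $51231497335603200$ by it gives $104448$.

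For surjectivity, I first note that $\OG(\Lplus,\PPPplus)$ has index $2$ in $\OG(\Lplus)$ and that $-\id$ acts trivially on $q_{12}$, since $q_{12}$ is $2$-elementary. It therefore suffices to show that $\OG(\Lplus)\to\OG(q_{12})$ is surjective. The subgroup $\id_U\oplus\OG(\Sigmatwlv(-2))$ already maps onto $\Image\etaminus\cong A\rtimes\SSSS_{12}/\{\pm1\}$. On the discriminant group $\Sigmatwlv(2)\dual/\Sigmatwlv(2)\cong\Sigmatwlv/2\Sigmatwlv\cong\FF_2^{12}$, this subgroup acts through $\SSSS_{12}$ together with the sign changes. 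The missing elements have to come from reflections in non-$(-2)$ vectors of $\Lplus$ that involve the $U$ summand.

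The natural candidates are reflections $s_v\colon x\mapsto x-2\intf{x,v}v/\intf{v,v}$ for primitive $v\in\Lplus$ with $\intf{v,v}=-4$ or $-8$ and $v/2\in\Lplus\dual$ (or the analogous divisibility condition), so that $s_v$ is integral. Such a reflection acts on $q_{12}$ as the reflection in the class of $v/2$, an element of norm $\equiv 1\bmod 2$. For instance, take $v=\weyl_0-\weyl_1+x$ with $x\in\Sigmatwlv(-2)$ chosen so that $v$ has norm $-4$ and $v/2$ is dual. The orthogonal group of a nondegenerate quadratic form over $\FF_2$ of this type is generated by reflections in anisotropic vectors, and $\OG(\Sigmatwlv)$ acts transitively enough to move representatives around. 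So it should suffice to realize one reflection per orbit of anisotropic vectors of $q_{12}$ that is not already covered.

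As a fallback, and in the spirit of Section~\ref{subsec:computational}, I would realize finitely many explicit such isometries. I would compute their permutation action on $q_{12}$ together with the images of generators of $\OG(\Sigmatwlv)$, and confirm with {\tt GAP} that the generated subgroup of $\SSSS(\FF_2^{12})$ has order $51231497335603200$.

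The main obstacle is guaranteeing that the reflections realized in $\Lplus$ reach every orbit needed to generate $\OG(q_{12})$. This is delicate because $q_{12}$ has values in $\QQ/2\ZZ$ with $a$-type summands, so the reflection-generation statement must be checked in the odd (non-even) discriminant-form setting. I would handle it computationally via the Schreier--Sims order check rather than by a structural argument.
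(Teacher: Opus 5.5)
\textbf{Verdict.} Your index computation is correct and is the paper's: by Remark~\ref{rem:q12} and Corollary~\ref{cor:OGSigma}, $|\Image\etaminus|=|\OG(\Sigmatwlv)|/2=2^{10}\cdot 12!$, and Proposition~\ref{prop:vGamindex} then gives $104448$. Your surjectivity plan also has the paper's shape: adjoin explicit isometries of $\Lplus$ to $\id_U\oplus\OG(\Sigmatwlv)$ and check the order of the generated subgroup of $\OG(q_{12})$. But the one ingredient that matters, an integral isometry of $\Lplus$ whose image on $q_{12}$ lies outside $\etaplus(\OG(\Sigmatwlv))$, is never produced, and the candidates you describe do not work.

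\textbf{Why your candidates fail.} Since $U$ is unimodular, $\Lplus\dual=U\oplus\frac12\Sigmatwlv(-2)$. So $v/2\in\Lplus\dual$ forces $v\in 2U\oplus\Sigmatwlv(-2)$.
\begin{itemize}
\item Your $v=\weyl_0-\weyl_1+x$ violates this. Moreover $\intf{v,v}=-2-2\intf{x,x}_{\Sigmatwlv}$ can never equal $-4$, because $\Sigmatwlv$ has no vectors of norm $1$.
\item The norm $-8$ option is empty. For primitive $v$, integrality of $s_v$ needs $v\in 4\Lplus\dual\subset 2\Lplus$, a contradiction. For $v=2v_0$ you just get a $(-2)$-reflection, which acts trivially on $q_{12}$.
\item Even a valid $v=2u+x$ may give nothing new. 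For $v=2\weyl_0+x$ with $\intf{x,x}_{\Sigmatwlv}=2$, we have $v/2\equiv x/2$ modulo $\Lplus$. So $s_v$ induces the same transvection as the reflection in $x$, which already lies in $\id_U\oplus\OG(\Sigmatwlv)$.
\end{itemize}
As written, nothing beyond $\etaplus(\OG(\Sigmatwlv))$ is fed into the Schreier--Sims check, and that subgroup has index $104448$.

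\textbf{A repair.} Let $x_6\in\Sigmatwlv(-2)$ correspond to $2e_1+\cdots+2e_6$, and put $v:=2\weyl_0+2\weyl_1+x_6$. Then $\intf{v,v}=8-12=-4$ and $v/2\in\Lplus\dual$. Hence $s_v\in\OG(\Lplus,\PPPplus)$, and it acts on $q_{12}$ as the transvection in the class of $x_6/2$.

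To see that this is new, identify the discriminant group with $\Sigmatwlv/2\Sigmatwlv$.
\begin{itemize}
\item Every isometry of $q_{12}$ fixes the characteristic class $c$, namely the class of $4e_1$.
\item Via $2y\mapsto y\bmod 2$ on the even-coordinate part of $\Sigmatwlv$, the space $c\sperp/\angs{c}$ is the even-weight code of length $12$ modulo $e_1+\cdots+e_{12}$.
\item On it, $A$ acts trivially and $\SSSS_{12}$ acts by permutations, whereas $s_v$ acts as the transvection in the weight-$6$ word $\{1,\dots,6\}$.
\item That transvection fixes every word $e_i+e_j$ with $i,j$ on the same side of the $6|6$ split. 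No nontrivial permutation does this, so $\etaplus(s_v)\notin\etaplus(\OG(\Sigmatwlv))$.
\end{itemize}
The order check then finishes the proof. Structurally, this reflects two facts: $\SSSS_{12}$ is maximal in $\OG^-_{10}(2)$, and $\etaplus(A)$ is the whole kernel $2^{10}$ of $\OG(q_{12})\to\OG^-_{10}(2)$.

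\textbf{Comparison with the paper.} The paper instead builds an isometry carrying $U$ to the hyperbolic plane spanned by $2\weyl_0+3\weyl_1+x_6$ and $3\weyl_0+2\weyl_1+x_6$. Writing it down needs the uniqueness of $\Sigmatwlv$ and a frame computation. The corrected reflection is explicit and visibly integral, and you can check by hand that it is new on $q_{12}$.
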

\begin{proof}
We have an inclusion $\OG(\Sigmatwlv)\inj \OG(\Lplus, \PPPplus)$
given by $g\mapsto \id_{U}\oplus g$,
and, by the method in Section~\ref{subsec:computational}, 
we observe that 
 the image $\etaplus(\OG(\Sigmatwlv))$ of this finite subgroup 
by $\etaplus$ is 
of size $490497638400=2^{10}\cdot 12!=|\OG(\Sigmatwlv)|/2$.
 \par
 We use the construction of $\Sigmatwlv$ in Section~\ref{subsec:vep12}.
 Let $v$ be the vector $2e_1+\cdots +2e_6$ of $\Sigmatwlv$ with norm $6$.
 We consider the vectors
 \[
 \vect{u}:=2\weyl_0+ 3\weyl_1+v,
 \quad 
 \vect{u}\sprime:=3\weyl_0+ 2\weyl_1+v,
 \]
 of $\Lplus=U\oplus \Sigmatwlv(-2)$ with norm $0$,
 where $\weyl_0$ and $\weyl_1$ are the chosen basis of $U$. 
Since 
$\intf{\vect{u}, \vect{u}\sprime}=1$, these two vectors 
generate a hyperbolic plane $U\sprime$ in $\Lplus$.
 The orthogonal complement $(U\sprime)\sperp$ of $U\sprime$ in $\Lplus$
 is of the form $\varSigma\sprime(-2)$,
where $\varSigma\sprime$ is a positive-definite odd unimodular lattice
with no vectors of norm $1$.
Therefore $\varSigma\sprime$ is isomorphic to $\Sigmatwlv$.
We calculate the frame of $\varSigma\sprime$ 
by the method given in the proof of Lemma~\ref{lem:frame},
and find an isomorphism $g\sprime \colon \Sigmatwlv\cong \varSigma\sprime$.
Combining $\weyl_0\mapsto \vect{u}$, $\weyl_1\mapsto \vect{u}\sprime$,
and $g\sprime \colon \Sigmatwlv\cong \varSigma\sprime$,
we obtain an automorphism $g\colon \Lplus\to\Lplus$.
Since ${\weyl_0}^g= \vect{u}$, 
the action of $g$ preserves $\PPPplus$.
Then $g\in \OG(\Lplus, \PPPplus)$
 is not contained in the subgroup $\OG(\Sigmatwlv)$.
We calculate the size of the subgroup of $\OG(\qplus)$
generated by $\etaplus(\OG(\Sigmatwlv))$ and $\etaplus (g)$
by the method given in Section~\ref{subsec:computational}, 
and confirm that 
this size is equal to $|\OG(\qplus)|=51231497335603200$.
Therefore $\etaplus$ is surjective.
\par
The computation concerning $\etaminus$ has already been  done in 
Section~\ref{subsec:vep12},
and we know that the image of 
the natural homomorphism 
$\etaminus\colon \OG(\Lminus) \to \OG(\qminus)$
is of size 
$|\OG(\Lminus)|/2=490497638400$.
By Proposition~\ref{prop:vGamindex},
we obtain 
\[
[\OG(\Lplus, \PPPplus): \vGam]
= \frac{ |\OG(q_{12})|}{\; |\Image(\etaminus)|\;}
=
\frac{51231497335603200}{490497638400}
=
104448.
\]
This proves the proposition.
\end{proof}
Since $\tau=0$,
we have $\Xitau=\tilXitau$ and $\Sigma_{\vep}([\tau])=\Cen(\dotz, \vep_{12})$.
Since $\Sigmatwlv$ is unimodular,
we have $\Leechminus\dual= (1/2)\Sigmatwlv(2)$.
Therefore we have 
\[
\Xitau=\tilXitau=\set{\xi\in \Leechminus\dual}{\xi^2=0, \;\textrm{or}\; \; \xi^2=1, \;\textrm{or}\; \; \xi^2=3/2},
\]
which is of size $1+264+2048$.
Using the construction of $\Sigmatwlv$ in Section~\ref{subsec:vep12} and 
identifying $\Leechminus\dual$ with $ (1/2)\Sigmatwlv(2)$,
we see that 
\[
\renewcommand{\arraystretch}{1.4}
\begin{array}{lcl}
\set{\xi\in \Leechminus\dual}{\xi^2=1} &=& \set{\pm e_i\pm e_j}{i\ne j}, \\
\set{\xi\in \Leechminus\dual}{\xi^2=3/2} &=& \set{(\sum s_i e_i)/2}{\textrm{$s_i\in \{1, -1\}$ and $\prod s_i=1$}}. \\
\end{array}
\]
%
%
We prove~\eqref{eq:nnell} holds for all pairs $\xi, \xi\sprime\in \Xitau$
by the method in Remark~\ref{rem:MMMtilwalls}.
Hence 
$\MMM(D_0)=\tilwalls(D_0)$ holds.
\par 
Recall that $C_{12}\in \Golay$ is the codeword such that $\vep(C_{12})=\vep_{12}$. 
From a generating set of $M_{24}$ given in~\cite[Chapter 10, Section 2]{theCSbook}, 
we compute a generating set of 
the stabilizer $M_{12}$ of $C_{12}$ in $M_{24}$.
Hence, by~\eqref{eq:M12},
we can compute the action of $\Cen(\dotz, \vep_{12})$ on $\Leech$ explicitly.
We confirm that this action decomposes $\Xitau=\tilXitau$
into three orbits of size $1$, $264$, $2048$.
Hence the action of $\Stab_{\vGam}(D_0)$ on $\walls(D_0)$ 
has also three orbits,
which are described as follows.
\begin{itemize}
\item[($o_{1}$)]
The orbit $o_1$ corresponding to $\{0\}$
consists of walls $\mu(\lambda)$,
where $\lambda$ runs through $\Leechplus$.
The fiber of $\mu\colon \tilwalls(D_0)\to\walls(D_0)$ 
over a wall $\mu(\lambda)\in o_1$ is a singleton $\{\lambda\}$.
\item[($o_{264}$)]
The orbit $o_{264}$ corresponding to $\set{\xi\in \Leechminus\dual}{\xi^2=1}$
consists of $\mu(\lambda)$,
where $\lambda\in \Leech$ runs through
vectors of $\Leech$ satisfying $\lambdaminus^2=1$.
The fiber $\mu\inv(\mu(\lambda))$  of $\mu$ 
over $\mu(\lambda)\in o_{264}$ consists 
of two vectors
$\lambda=\lambdaplus+\lambdaminus$ and $\lambda\sprime=\lambdaplus-\lambdaminus$.
The corresponding pair of Leech roots 
$\LeechRoot{\lambda}$, $\LeechRoot{\lambda\sprime}$ 
satisfies
$\intfL{\LeechRoot{\lambda}, \LeechRoot{\lambda\sprime}}=0$.
\item[($o_{2048}$)]
The orbit $o_{2048}$ corresponding to $\set{\xi\in \Leechminus\dual}{\xi^2=3/2}$
consists of $\mu(\lambda)$,
where $\lambda\in \Leech$ runs through
vectors of $\Leech$ satisfying $\lambdaminus^2=3/2$.
For a wall $\mu(\lambda)\in o_{2048}$, the fiber
 $\mu\inv(\mu(\lambda))$ consists of
$\lambda=\lambdaplus+\lambdaminus$ and $\lambda\sprime=\lambdaplus-\lambdaminus$, and 
the corresponding pair of Leech roots 
 satisfies
$\intfL{\LeechRoot{\lambda}, \LeechRoot{\lambda\sprime}}=1$.

\end{itemize}
%
 %
\subsection{Involutions $\gamma=(\vep_{16}, \tau)$}\label{subsec:walls16}
%
%
%
%
%
%
%
We have 
\[
\Leechplus\cong\Laminated_{8}\cong E_8(2), \quad \Leechminus\cong \Lamsxtn.
\]
The isomorphism classes of the lattices $L_{\pm}$ and 
the finite quadratic forms $q_{\pm}$ are described in Table~\ref{table:Lpmqpm}.
The indices of the images of $\etaminus$ are also given there.
\begin{proposition}\label{prop:etaplussurj16}
The natural homomorphism $\etaplus\colon \OG(\Lplus, \PPPplus)\to \OG(\qplus)$ is surjective.
Hence the index of $\vGam$ in $\OG(\Lplus, \PPPplus)$ is
\[
[\OG(\Lplus, \PPPplus): \vGam]=
\begin{cases}
2 & \textrm{if $\tau=0$}, \\
240 & \textrm{if $\tau= \tauI$}, \\
71145 & \textrm{if $\tau= \tauII$}.
\end{cases}
\]
\end{proposition}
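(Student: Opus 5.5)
Once $\etaplus$ is known to be surjective, the index formula follows from Proposition~\ref{prop:vGamindex} together with the last column of Table~\ref{table:Lpmqpm}. So the real task is surjectivity, which I will prove separately for the three values of $\tau$, using the explicit forms of $\Lplus$: $U\oplus E_8(-2)$, $\typeILat_{1,9}(2)$ and $\typeIILat_{1,9}(2)$.

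\emph{Case $\tau=0$.} Here $\Lplus=U\oplus E_8(-2)$, and the embedding $g\mapsto\id_U\oplus g$ gives $W(E_8)=\OG(E_8(-2))\subset\OG(\Lplus,\PPPplus)$. By Section~\ref{subsec:vep8}, the map $W(E_8)\to\OG(q_8)$ is surjective. The discriminant form of $U\oplus E_8(-2)$ is exactly that of $E_8(-2)$, namely $u^{\oplus 4}$, and the map $\etaplus$ restricted to this subgroup is the natural one. Hence $\etaplus$ is surjective, and the index equals $|\OG(q_8)|/|\Image\etaminus|=2$.

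\emph{Cases $\tau=\tauI,\tauII$.} Write $\Lplus=N(2)$, where $N$ is unimodular ($N=\typeILat_{1,9}$ or $\typeIILat_{1,9}$). Then $\Lplus\dual=\frac12\Lplus$, so $\qplus$ is the quadratic form on $N/2N$ given by $x\mapsto \frac12\intf{x,x}_{N}\bmod 2\ZZ$, i.e.\ $(a\oplus b)^{\oplus5}$ or $u^{\oplus5}$. Moreover $\OG(\Lplus,\PPPplus)=\OG(N,\PPP)$, so $\etaplus$ is the reduction map $\OG(N)\to\OG(N/2N,\bar q)$.

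\begin{itemize}
\item For $N=\typeIILat_{1,9}$, the target $\OG(\qplus)$ is $\OG^+_{10}(2)$, of order $46998591897600$. It is generated by the transvections $x\mapsto x+\bar q'(x,\bar r)\bar r$ with $\bar q(\bar r)=1$. Every such $\bar r$ lifts to a $(-2)$-vector $r\in N$, because $\OG(N)$ acts transitively on primitive vectors of a given norm and class, and residues of norm $-2$ vectors cover all nonsingular classes. The reflection $s_r\in W(N)\subset\OG(N,\PPP)$ then maps to that transvection.
\item For $N=\typeILat_{1,9}$, I use the same argument with reflections in vectors of norm $-1$ and $-2$. These lie in $\OG(N,\PPP)$, and I check that their reductions generate $\OG(\qplus)$, which has order $89181388800$.
\end{itemize}

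As a safer alternative to the generation argument, I would do the computation of Section~\ref{subsec:computational} directly. Take the reflections in the simple roots of the Vinberg diagrams of $\typeILat_{1,9}$ and $\typeIILat_{1,9}$ (the $E_{10}$ diagram in the even case), reduce them mod $2$, and verify with {\tt GAP} that the permutation group they generate on $N/2N$ has order $|\OG(\qplus)|$. The two required orders are $89181388800$ and $46998591897600$.

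The main obstacle is this surjectivity for the unimodular-scaled cases. It is cleanest to settle it by the explicit generator computation. If needed, the classical fact that $\OG(N)\to\OG(N/2N)$ is surjective for indefinite unimodular $N$ of rank $\ge3$, a strong approximation statement from \cite{MirandaMorrison2009}, serves as a theoretical backup.
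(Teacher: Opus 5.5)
Your proposal is correct, and the route you finally rely on is the paper's own: for $\tau=0$ the reflections of $W(E_8)$ acting as $\id_U\oplus g$, and for $\tau=\tauI,\tauII$ the reflections in the Vinberg simple roots of $\typeILat_{1,9}$ and $\typeIILat_{1,9}$, with a {\tt GAP} check that their images generate a subgroup of order $|\OG(\qplus)|$; Proposition~\ref{prop:vGamindex} and Table~\ref{table:Lpmqpm} then give the indices. Your extra transvection argument for $\typeIILat_{1,9}$ also works, but your transitivity justification for lifting each nonsingular class of $N/2N$ to a $(-2)$-vector is circular, so check that lifting directly in $U\oplus E_8(-1)$; and in the odd case, reflections in $(-1)$-vectors reduce to the identity on $N/2N$, so only the $(-2)$-reflections contribute and that case rests on the computation.
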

\begin{proof}
By the method described in Section~\ref{subsec:computational},
we calculate the values of $|\OG(\qplus)|$, 
which are given in Table~\ref{table:Lpmqpm}.
It is enough to find a finite subset $S$ 
of $\OG(\Lplus, \PPPplus)$ such that 
the subgroup $\angs{\etaplus(S)}$ of $\OG(\qplus)$
generated by $\etaplus(S)$ is of size equal to $|\OG(\qplus)|$.
\par
Suppose that $\tau=0$.
In this case,
we have $\Lplus\cong U\oplus E_8(-2)$.
We have a natural embedding $\OG(E_8)=W(E_8)\inj \OG(\Lplus, \PPPplus)$
given by $g\mapsto \id_U \oplus g$.
Then 
the set $S$ consisting of the standard $8$ reflections in $W(E_8)$ 
satisfies $|\angs{\etaplus(S)}|=|\OG(\qplus)|$.
\par
Suppose that $\tau= \tauI$.
In this case,
we have $\Lplus\cong \typeILat_{1, 9}(2)$.
By Vinberg’s algorithm~\cite{Vinberg1985}, 
we obtain a basis of the odd unimodular hyperbolic lattice $\typeILat_{1,9}$ 
whose Coxeter–Dynkin diagram is shown in the upper part of Figure~\ref{fig:CDdiagrams}. 
(In this diagram, 
the black node represents a $(-1)$-vector, and 
the dashed edge indicates that 
$\intf{\hbox{\large{$\bullet, \circ$}}}=-1$.)
The reflections 
with respect to these $10$ vectors
belongs to $\OG(\Lplus, \PPPplus)$, and 
 the set $S$ consisting of these  reflections
satisfies $|\angs{\etaplus(S)}|=|\OG(\qplus)|$.
\par
Suppose that $\tau= \tauII$.
In this case,
we have $\Lplus\cong \typeIILat_{1, 9}(2)$,
which 
has a basis 
whose Coxeter–Dynkin diagram is shown in the lower part of Figure~\ref{fig:CDdiagrams}. 
Then the set $S$ of the
 $10$ reflections corresponding to the nodes in this diagram
satisfies $|\angs{\etaplus(S)}|=|\OG(\qplus)|$.
\begin{figure}%
\begin{tikzpicture}[scale=0.8, every node/.style={circle, draw, inner sep=2pt}]
\node (a1) at (0,0) {};
\node (a2) at (1.5,0) {};
\node (a3) at (3,0) {};
\node (a4) at (4.5,0) {};
\node (a5) at (6,0) {};
\node (a6) at (7.5,0) {};
\node (a7) at (9,0) {};
\node (a8) at (10.5,0) {};
\node (a9) at (3,1.5) {}; 
\node[fill=black] (a10) at (1.5,1.5) {}; 
\node[draw=none] (phantom) at (12,0) {};
%
\draw (a1)--(a2)--(a3)--(a4)--(a5)--(a6)--(a7)--(a8);
\draw (a3)--(a9); 
\draw (a2)--(a10); 
\draw[dashed] (a9)--(a10);
\end{tikzpicture}

\vskip .8cm
\begin{tikzpicture}[scale=0.8, every node/.style={circle, draw, inner sep=2pt}]
%
\node (a2) at (0,0) {};
\node (a3) at (1.5,0) {};
\node (a4) at (3,0) {};
\node (a5) at (4.5,0) {};
\node (a6) at (6,0) {};
\node (a7) at (7.5,0) {};
\node (a8) at (9,0) {};
\node (a9) at (10.5,0) {};
\node (a10) at (12,0) {};
\node (a1) at (3,1.5) {};
%
\draw (a2)--(a3)--(a4)--(a5)--(a6)--(a7)--(a8)--(a9)--(a10);
\draw (a4)--(a1);
\end{tikzpicture}
\caption{Coxeter-Dynkin diagrams for $\typeILat_{1, 9}$ and $\typeIILat_{1, 9}$}\label{fig:CDdiagrams}
\end{figure}
%
%
\par
By Proposition~\ref{prop:vGamindex},
the index $[\OG(\Lplus, \PPPplus): \vGam]$ is equal to the index 
of the image of the natural homomorphism 
$\etaminus\colon \OG(\Lminus) \to \OG(\qminus)$
in $\OG(\qminus)$.
The latter  is
computed by the method in Section~\ref{subsec:computational}, and is 
indicated in Table~\ref{table:Lpmqpm}.
This completes the proof.
\end{proof}
%
Next we investigate the walls of $D_0$.
Recall that, 
by 
the diagram~\eqref{eq:groupfibprodCenvep} and the surjectivity of 
the natural homomorphism $\OG(\Leechplus)\cong W(E_8)\to \OG(\qplus)\cong \OG(u^{\oplus 4})$, 
the projection
$\Cen(\dotz, \vep_{16})\to \OG(\Leechminus)$ is surjective.
Let $\barSigma_{\vep} ([\tau])$ denote the stabilizer of $[\tau]\in \Coker(\pi_{16})$ in $ \OG(\Leechminus)$.
Then $\barSigma_{\vep} ([\tau])$ is the image of $\Sigma_{\vep} ([\tau])\subset \Cen(\dotz, \vep_{16})$
by the projection, and 
the action of $\Sigma_{\vep} ([\tau])$
on $\tilXitau$ 
factors through the surjection $\Sigma_{\vep} ([\tau])\to\barSigma_{\vep} ([\tau])$.
Note that, by Remark~\ref{rem:K16}, the group  $K_{16}$ of order $512$ is contained in $ \barSigma_{\vep} ([\tau])$.
%
\subsubsection{Case $\tau=0$.}
In this case,
we have $\Xitau=\tilXitau$,
and it is a singleton $\{0\}$.
We obviously have $\MMM(D_0)=\tilwalls(D_0)$,
and the group $\barSigma_{\vep} ([\tau])$ acts transitively $\tilXitau$.
Hence $\Stab_{\Gamma}(D_0)$ acts  transitively on  $\walls(D_0)$.
The fiber of $\mu\colon \tilwalls(D_0)\to\walls(D_0)$ 
over a wall $\mu(\lambda)$ of $D_0$ is a singleton $\{\lambda\}$.
%
%
\subsubsection{Case $\tau=\tauI$.}
In this case,
we have $|\Xitau|=|\tilXitau|=512$,
and all vectors of $\tilXitau$ are of norm $3/2$.
We have $\MMM(D_0)=\tilwalls(D_0)$.
We confirm by direct computation that 
the  subgroup $K_{16}$  of $\barSigma_{\vep} ([\tau])$
acts on $\tilXitau$ transitively.
Hence, a fortiori, so does $\barSigma_{\vep} ([\tau])$, and 
therefore  $\Stab_{\Gamma}(D_0)$ acts  transitively on  $\walls(D_0)$.
The fiber 
 of $\mu$ over $\lambda\in \walls(D_0)$ consists 
of two vectors
$\lambda=\lambdaplus+\lambdaminus$ and $\lambda\sprime=\lambdaplus+\tau-\lambdaminus$, and 
the corresponding pair of Leech roots 
 satisfies
$\intfL{\LeechRoot{\lambda}, \LeechRoot{\lambda\sprime}}=1$.
\subsubsection{Case $\tau=\tauII$.}
In this case,
we have $|\Xitau|=|\tilXitau|=32$,
and all vectors of $\tilXitau$ are of norm $1$.
We have $\MMM(D_0)=\tilwalls(D_0)$.
The group $K_{16}$  
acts on $\tilXitau$ transitively,
and hence so does $\barSigma_{\vep} ([\tau])$.
Hence the action of $\Stab_{\Gamma}(D_0)$ on  $\walls(D_0)$ is transitive.
The fiber 
 $\mu\inv(\mu(\lambda))$ 
consists of two vectors
$\lambda=\lambdaplus+\lambdaminus$ and $\lambda\sprime=\lambdaplus+\tau-\lambdaminus$, and 
the corresponding pair of Leech roots 
 satisfies
$\intfL{\LeechRoot{\lambda}, \LeechRoot{\lambda\sprime}}=0$.
%
 %
 
 \begin{remark}\label{rem:further}
 Let us pursue the heuristic analogy given in Section~\ref{subsec:alggeom} further.
We consider $\gamma=(\vep_{16}, \tauII)$ as an Enriques involution of 
the virtual K3 surface $\XX$,
and let $\YY$ be the corresponding virtual Enriques surface.
We consider $\ConChamz$ as the nef-and-big cone of $\XX$.
Then $D_0=\PPPplus\cap \ConChamz$ is the nef-and-big cone of $\YY$.
Each wall of $D_0$ corresponds to a smooth rational curve on $\YY$,
and its pullback  splits into a disjoint pair of smooth rational curves on $\XX$.
Moreover, if we ignore the period condition, 
we can regard $\dotinf$ as the automorphism group of $\XX$ 
and $\Stab_{\vGam}(D_0)$  as the automorphism group of $\YY$.
Unfortunately, the kernel of 
the natural homomorphism $\Cen(\dotinf,\gamma)\to\Stab_{\vGam}(D_0)$
is not equal to $\angs{\gamma}$  but  of size 
\[
32=|\Ker(\OG(\Lminus) \to \OG(\qminus)|
\]
by the diagram~\eqref{eq:groupfibprodP}.
Therefore,  it seems that, for  the analogy to be perfect, 
 the period condition  should be appropriately considered.
 \end{remark}
\section*{Acknowledgements}
The author would like to thank Professor Simon Brandhorst for many valuable discussions and insightful comments, which greatly improved this paper.
This work was supported by JSPS KAKENHI Grant Numbers 20H00112, 23K20209, and 23H00081, and by the Deutsche Forschungsgemeinschaft (DFG, German Research Foundation) – SFB-TRR 195 – Project-ID 286237555.
%
 %
%
%

\bibliographystyle{plain}
\bibliography{myrefsaffineConway}
\end{document}